\newtheoremstyle{obs}
{3pt}
{3pt}
{}
{}
{\bfseries}
{.}
{.5em}
{}
\theoremstyle{obs}
\newtheorem{remark}[theorem]{Remark}
\newtheorem{example}[theorem]{Example}
\newcommand{\norm}[1]{\left\lVert#1\right\rVert}
\newcommand{\rvline}{\hspace*{-\arraycolsep}\vline\hspace*{-\arraycolsep}}
\newcommand\xqed[1]{%
	\leavevmode\unskip\penalty9999 \hbox{}\nobreak\hfill
	\quad\hbox{#1}}
\newcommand\demo{\xqed{$\triangle$}}
\title{\bf Retraction maps: a seed of geometric integrators}
\author[1]{M.\ Barbero-Li\~n\'an}
\author[2]{D.\ Mart\'{\i}n de Diego}
\affil[1]{\small Departamento de Matem\'atica Aplicada, Universidad Polit\'ecnica de Madrid, Av. Juan de Herrera 4, 28040 Madrid, Spain, }
\affil[2]{\small Instituto de Ciencias Matem\'aticas (CSIC-UAM-UC3M-UCM), C/Nicol\'as Cabrera 13-15, 28049 Madrid, Spain}
\begin{document}


	

	

	\maketitle

\abstract{The classical notion of retraction map used to approximate geodesics is extended and rigorously defined to become a powerful tool to construct geometric integrators and it is called discretization map. Using the geometry of the tangent and cotangent bundles, we are able to tangently and cotangent lift such a map so that these lifts inherit the same properties as the original one and they continue to be discretization maps. In particular, the cotangent lift of a discretization map is a natural symplectomorphism, what plays a key role for constructing geometric integrators and symplectic methods.  As a result, a wide range of (higer-order) numerical  methods are recovered and canonically constructed by using different discretization maps, as well as some operations with Lagrangian submanifolds.

\vspace{4mm}

\textbf{Keywords}: retraction maps, symplectic methods, discrete variational calculus, canonical transformations of the tangent and cotangent bundles.

\vspace{4mm}

\textbf{Mathematics Subject Classification:} 37M15, 65P10, 70G45, 53D22.

}


\section{Introduction}

The notion of retraction map is an essential tool in different research areas like optimization theory,  numerical analysis, interpolation (see \cite{AbMaSeBookRetraction} and references therein). 

In optimization theory, the goal is to find a value $x$ in a differentiable manifold $M$ such that $f(x)$ is the minimum of a real-valued function $f: M\rightarrow {\mathbb R}$. In the case that $M$ is a linear space, as ${\mathbb R}^n$ equipped with the standard inner product, the notions of gradient or Hessian of the function $f$ are properly defined and give us useful local information to localize the possible candidates to minimize $f$. Moreover, gradient  descent  or  Newton’s  method can also be used to search for a solution. 

Riemannian geometry allows us to introduce similar concepts to gradient and Hessian in a differentiable manifold paving  the  way  for  optimization. But we need another important ingredient: how to move on a manifold. In Riemannian geometry this notion is given by the exponential map. On a Riemannian manifold $(M, g)$ (or more generally a semi-Riemannian manifold) we can define  ${\rm exp}_x: T_x M \to M$ the Riemannian exponential at the point $q$. As mentioned for instance in~\cite{doCarmo},
$
{\rm exp}_x(\xi) = \sigma(1),
$
for $\xi \in T_x M$, where $\sigma: [0, 1] \to M$ is the unique geodesic in $M$ with initial velocity $\xi$, that is, $\sigma(0) = x$ and $\dot{\sigma}(0) =\xi$. Moreover, there exist open subsets ${\mathcal U} \subseteq T_x M$ and $U \subseteq M$, with ${\mathcal U}$ starshaped about $0_x \in {\mathcal U}$ and $x \in U$, such that
$
{\rm exp}_x: {\mathcal U} \to U
$
is a diffeomorphism and 
$
{\rm exp}_x(t\xi) = \sigma(t)$ and $T_{0_x}{\rm exp}_x = Id_{T_x M}$.
However, only in simple examples it is possible to explicitly compute the exponential map of a Riemannian manifold. Therefore, efficient approximations of geodesics are  crucial for designing algorithms on manifolds. Here is where retraction maps play an important role (see \cite{2012AbsilMalick,AbMaSeBookRetraction} and references therein). 

Roughly speaking, retraction maps provide a way to select a smooth curve on a differentiable manifold given an initial position and velocity. Such a curve is an approximation of the Riemannian exponential map. 
More specifically, a retraction is typically defined as a local $C^1$-map $R_x: U_x\subset T_xM\rightarrow M$ such that $R_x(0_x) = x$   and 
$\left.\frac{d}{dt}\right\vert_{t=0}R_x(t\xi)=T_{0_x}R_x(\xi)= \xi$  for all $\xi \in T_x M$, where we use the identification $T_{0_x}T_x M \equiv T_xM$ and $T_{0_x}R_x: T_{0_x}(T_xM)\rightarrow T_xM$ denotes the tangent map of $R_x$ at $0_x$ (see \cite{AbMa}). 
Observe that, since we are using first order approximations, this definition is independent of the initial Riemannian metric . However, for second or higher order retractions the particular Riemannian metric does play a role.
The property $\left.\frac{d}{dt}\right\vert_{t=0}R_x(t\xi)=\xi$ implies that
$d_g(R_x(t\xi),\sigma(t))=O(t^2)$, where $d_g$ denotes the Riemannian distance  (see \cite{1986Shub}).

For our purposes we will need a more general definition of a retraction map. We construct a discretization map $R_d:   U\subset TM\rightarrow M\times M$ in Definition \ref{def:DiscreteMap2}, where  the image  of  $\xi\in U$ is now two ``nearby" points of $M$. We understand such a map as a discretization of the tangent bundle because $TM$ is locally diffeomorphic to two copies of the  manifold $M$. As they can be related to retraction maps, they are denoted by $R_d$, where the subscript $d$ stands for discretization.

As an example, if we have a Riemannian manifold $(M, g)$, with associated exponential map $exp$, then a discretization map is
$$R_d(\xi)=\left(exp_{\tau_M(\xi)} \left(-\frac{1}{2}\xi\right), exp_{\tau_M(\xi)}\left(\frac{1}{2}\xi\right)\right),$$ where $\tau_M: TM\rightarrow M$ is the canonical projection of the tangent bundle. This particular map $R_d$ applied in Equation~\eqref{first} will lead to the implicit midpoint method on Euclidean spaces. We precisely discuss the properties
 of these discretization maps in Section~\ref{Sec:Rd}.  

In numerical analysis, if we have a vector field $X$ on $M$, that is,  a section $X: M\rightarrow TM$ (that is, $\tau_M\circ X=Id_{M}$), and we want to find a numerical approximation of the integral curves, an idea is to use a discretization map and consider the following first order discrete equation: 
\begin{equation}\label{first}
h X\left(\tau_M\left( R_d^{-1}(x_k, x_{k+1})\right)\right)=R_d^{-1}\left(x_k, x_{k+1}\right)\,.
\end{equation} We prove in Proposition~\ref{Prop:inv_Rd} that $R_d$ is a local diffeomorphism and the inverse map can be computed.
Given an initial condition $x_0$, we might be able to solve the implicit system~\eqref{first} to find a sequence $\{x_k\}$ which is an approximation of $\{x(kh)\}$, where $x(t)$ is the integral curve of $X$ with initial condition $x_0$ and $h$ is the time step. 
For instance, if $M$ is the  vector space ${\mathbb R}^n$ and $R_d(x, v)=\left(x-\frac{v}{2}, x+\frac{v}{2}\right)$, then Equation~\eqref{first} becomes
\[
\frac{x_{k+1}-x_k}{h}=X \left(\frac{x_{k}+x_{k+1}}{2}\right)\, .
\]

Our main interest in this article consists of designing numerical methods for second order differential equations (SODEs) and mainly for Hamilton's
equations.
For instance, a second order differential equation $\ddot{x}=f(x, \dot{x})$ is geometrically represented by a special vector field 
\[
\Gamma(x,\dot{x})=\dot{x}\frac{\partial}{\partial x}+ f(x, \dot{x})\frac{\partial}{\partial \dot{x}}\; ,
\]
which is now defined on the tangent bundle $TM$ of $M$ \cite{AbMa}. These vector fields are called SODEs.    

On the other hand, it is well-known that the classical Hamilton's equations are defined on the cotangent bundle $T^*M$ of the manifold $M$. Therefore, we face the problem of how, given a discretization map on $M$, we can lift it to the tangent and cotangent bundles. Besides, we define in Proposition 2.5 adjoint discretization maps by inversion with the objective to construct symplectic symmetric numerical methods of higher-order.

In Section~\ref{Sec:TliftRd}, we lift a discretization map on a manifold to the tangent bundle using the canonical involution. This tangent lift makes possible to define geometric discretizations of SODEs in Section~\ref{Sec:AllSODE}.

In Section~\ref{Sec:coTliftRd} we lift a discretization map on a manifold to the cotangent bundle using well-known constructions from symplectic geometry. We show in Section~\ref{Sec:DualLift} that this cotangent lift is nothing else than the dual construction of the above-mentioned tangent lift. Moreover, it is essential to prove that the cotangent lift of a discretization map is always a symplectomorphism because that makes possible to construct symplectic integrators for Hamilton's equations and Euler-Lagrange equations in Section~\ref{Sec:GeomInt}.

In Section~\ref{Sec:ExliftRd} we carefully work out a few examples of the discretization maps on different manifolds. 

Section~\ref{Sec:GeomInt} describes how to obtain numerical methods for Euler-Lagrange equations and Hamilton's equations using the tools described in the previous sections. In particular, in Section~\ref{Sec:discreteVarCalc} we compare the geometric integrators from Section~\ref{Sec:GeomIntHam} with the theory of discrete variational calculus~\cite{MW_Acta}. When the symplectic numerical methods in Section~\ref{Sec:GeomIntHam} are understood as Lagrangian submanifolds, we can start to compose Lagrangian submanifolds coming from different discretization maps to construct general symplectic methods for Hamilton's equation in Section~\ref{Sec:Compose}. We study how to define higher-order geometric methods by composing symmetric symplectic methods in Section~\ref{Sec_higherorder}.

Along the paper we show how well-known geometric methods (Newmark, St\"ormer-Verlet, etc) are obtained using the new tools described here. Hence,
the work developed in this paper opens the path to define, even higher-order, geometric integrators for more complex mechanical systems that may include forced systems, system with constraints, optimal control problems, Dirac systems, etc. We describe specific future research lines in Section~\ref{Sec:future}.

\section{Retraction maps} \label{Sec:Rd}

A retraction map plays the role of generalizing the linear-search methods in Euclidean spaces to general manifolds. On a manifold with nonzero curvature to move along the tangent line does not guarantee that the motion stays on the manifold. The retraction map provides the tool to define the notion of moving in a direction of a tangent vector while staying on the manifold. That is why retraction maps have been widely used to construct numerical integrators of ordinary differential equations, since it allows us to move from a point and a velocity to one nearby point so that the differential equation can be discretized.

The first notion of retraction that appears in the literature can be found in~\cite{1931Borsuk} from a topological viewpoint. Later on, the notion of retraction map as defined below is used to define Newton's method on Riemannian manifolds~\cite{1986Shub,2002Adler}.

\begin{definition}\label{def-RetractMap} A \textbf{retraction map} on a manifold $M$ is a smooth mapping $R$ from the tangent bundle $TM$ onto $M$. Let $R_x$ denote the restriction of $R$ to $T_xM$, the following properties are satisfied:
\begin{enumerate}
	\item $R_x(0_x)=x$ where $0_x$ denotes the zero element of the vector space $T_xM$.
	\item With the canonical identification $T_{0_x}T_xM\simeq T_xM$, $R_x$ satisfies \begin{equation}\label{eq-DefRetract-prop}
	{\rm D}R_x(0_x)=T_{0_x}R_x={\rm Id}_{T_xM},
	\end{equation}
	where ${\rm Id}_{T_xM}$ denotes the identity mapping on $T_xM$.
\end{enumerate}
\end{definition}

The condition~\eqref{eq-DefRetract-prop} is known as \textbf{local rigidity condition} since, given $\xi\in T_xM$,  the curve $\gamma_\xi(t)=R_x(t\xi)$ has $\xi$ as tangent vector at $x$, i.e.
\[
\dot{\gamma}_\xi(t)= \langle DR_x(t\xi), \xi\rangle\;   \hbox{  and, in consequence,   } \dot{\gamma}_\xi(0)= {\rm Id}_{T_xM}(\xi)=\xi\; .
\]
This notion connects with the geometric interpretation of the exponential map ${exp}$ on Riemannian manifolds given in~\cite[Chapter 3.2]{doCarmo}. Therefore the image of $\xi$ through the exponential map is a point on the Riemannian manifold obtained by moving along a geodesic a length equal to the norm of $\xi$ starting with the velocity $\xi/\|\xi\|$, that is, 
\[
exp_x(\xi)=\sigma (\|\xi\|)\; ,
\]
where $\sigma$ is the unit speed geodesic such that $\sigma(0)=x$ and $\dot{\sigma}(0)=\xi/\|\xi\|$. 

 Remember that the exponential map is a typical example of a retraction map. With all that in mind we are able to generalize the property of local rigidity in Definition~\ref{def-RetractMap} that allows a discretization of the tangent bundle of the configuration manifold opening a new path to construct numerical integrators.

After studying the contribution given  in~\cite{2009CuellPatrick,3MMM} we define a generalization of the retraction map in Definition~\ref{def-RetractMap}. Given a point and a velocity, we obtain two nearby points that are not necessarily equal to the initial base point. As discussed in the sequel, numerical methods will be recovered from this new map.

\begin{definition} \label{def:DiscreteMap2} A map 
	$R_d\colon U\subset TM\rightarrow M\times M$ given by \begin{equation*}
	R_d(x,v)=(R^1(x,v),R^2(x,v)),
	\end{equation*} 
	where  $U$ is an open neighborhood of the zero section $0_x$ of $TM$, 
	defines a {\bf discretization map on $M$} if it satisfies 
	\begin{enumerate}
		\item $R_d(x,0)=(x,x)$,
		\item $T_{0_x}R^2_x-T_{0_x}R^1_x\colon T_{0_x}T_xM\simeq T_xM\rightarrow T_xM$ is equal to the identity map on $T_xM$ for any $x$ in $M$, where $R^a_x$ denotes the restrictions of $R^a$, $a=1,2$, to $T_xM$.
	\end{enumerate}
\end{definition}
If $R^1(x,v)=x$, the two properties in Definition~\ref{def:DiscreteMap2} guarantee that the both properties in Definition~\ref{def-RetractMap} are satisfied by $R^2$. Thus, as mentioned, Definition~\ref{def:DiscreteMap2} generalizes Definition~\ref{def-RetractMap}. 

\begin{proposition}\label{Prop:inv_Rd} Let $R_d$ be an discretization map on $M$,
	$R_d$ is a local diffeomorphism from some neighborhood of the zero  section of  $TM$. 
	\end{proposition}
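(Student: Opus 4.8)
The plan is to reduce everything to the inverse function theorem: it suffices to check that the tangent map $T R_d$ is a linear isomorphism at each point of the zero section. Since the locus where the differential is invertible is open (nonvanishing of the Jacobian), this will automatically upgrade to an open neighborhood of the whole zero section on which $R_d$ is a local diffeomorphism, which is exactly the assertion. Note that source and target both have dimension $2\dim M$, so bijectivity of $TR_d$ is equivalent to either injectivity or surjectivity.

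First I would fix $x\in M$ and work at the point $(x,0_x)$. Along the zero section the tangent space $T_{(x,0_x)}TM$ splits canonically as a direct sum of a \emph{horizontal} copy of $T_xM$ (tangent to the zero section) and a \emph{vertical} copy of $T_xM$ (tangent to the fiber $T_xM$, identified via $T_{0_x}(T_xM)\simeq T_xM$). The target $T_{(x,x)}(M\times M)$ is likewise $T_xM\oplus T_xM$. With respect to these splittings I would record $T_{(x,0_x)}R_d$ as a $2\times 2$ block matrix whose rows correspond to $R^1$ and $R^2$ and whose columns correspond to the horizontal and vertical directions.

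Next I would evaluate the four blocks from the two defining properties. Differentiating the identity $R^a(x,0_x)=x$ (property 1) along the zero section shows that both horizontal blocks equal $\mathrm{Id}_{T_xM}$, while differentiating in the fiber direction at $0_x$ identifies the vertical blocks with $T_{0_x}R^1_x$ and $T_{0_x}R^2_x$. Thus
\[
T_{(x,0_x)}R_d=\begin{pmatrix} \mathrm{Id}_{T_xM} & T_{0_x}R^1_x \\ \mathrm{Id}_{T_xM} & T_{0_x}R^2_x \end{pmatrix}.
\]
Subtracting the first block-row from the second brings this to block upper-triangular form with diagonal blocks $\mathrm{Id}_{T_xM}$ and $T_{0_x}R^2_x-T_{0_x}R^1_x$; by property 2 of Definition~\ref{def:DiscreteMap2} the latter is again $\mathrm{Id}_{T_xM}$. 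Hence the differential is invertible, and the inverse function theorem gives a local diffeomorphism near each $(x,0_x)$, which the openness argument packages into a neighborhood of the entire zero section.

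I expect the only genuine care to lie in the bookkeeping of the canonical identifications, namely making precise the horizontal/vertical splitting of $T_{(x,0_x)}TM$ and checking that differentiating $R^a_x$ in the fiber direction really coincides with $T_{0_x}R^a_x$ in the sense of Definition~\ref{def:DiscreteMap2}. Once those identifications are pinned down the block computation is routine, and the conclusion drops out of the inverse function theorem.
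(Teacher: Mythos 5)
Your proposal is correct and follows essentially the same argument as the paper: the paper also computes the Jacobian of $R_d$ at points of the zero section (in induced local coordinates rather than via the canonical horizontal/vertical splitting), obtains the same block matrix with identity blocks in the first column, row-reduces using property 2 of Definition~\ref{def:DiscreteMap2} to reach a block upper-triangular matrix with identity diagonal blocks, and concludes by the inverse function theorem. Your coordinate-free formulation of the splitting and the explicit remark about openness upgrading the pointwise statement to a neighborhood of the whole zero section are just tidier packagings of the same computation.
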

\begin{proof}
	Let $(x^i)$ be local coordinates for $M$ centered at $x$, and $(x^i, v^i)$ be the corresponding induced  coordinates on $TM$ centered at $0_x$. By the definition of the discretization map, the Jacobian matrix of $R_d$ at $(x^i,0)$ is locally written as
	\[
	\left(
	\begin{array}{cc}
	{\rm Id}&
	\frac{\partial R^1}{\partial v}(x, 0)\\ {\rm Id} &
	\frac{\partial R^2}{\partial v}( x, 0)
	\end{array}
	\right)	\, ,
	\] where ${\rm Id}$ denotes the identity matrix. Note that the 
	regularity of that Jacobian matrix is equivalent to the invertibility  of the following matrix
	\[
	\left(
	\begin{array}{cc}
	{\rm Id}&
	\frac{\partial R^1}{\partial v}(x, 0)\\0
	&\frac{\partial R^2}{\partial v}(x, 0)-\frac{\partial R^1}{\partial v}(x, 0)\end{array}
	\right)=\left(
	\begin{array}{cc}
	{\rm Id}&
	\frac{\partial R^1}{\partial v}(x, 0)\\0&
	{\rm Id}\end{array}
	\right)	 \, ,
	\]
	due to the property 2 in Definition~\ref{def:DiscreteMap2}.
	Therefore, the inverse function theorem guarantees that $R_d$  is a local diffeomorphism from some neighborhood of the identity section to its image.	
	\end{proof}

	There is a general and interesting  way to obtain discretization maps from the usual retraction maps. The following result is very useful for the Examples~\ref{Ex:Sphere} and~\ref{Ex:cayley}.

	\begin{proposition}\label{prop:buildExtRetraction} Let $R\colon TM \rightarrow M$ be a retraction map as in Definition~\ref{def-RetractMap}. For any $\theta\in [0,1]$ the map $R_d\colon TM \rightarrow M\times M$ given by $$R_d(x,v)=\left(R(x,-\theta v),R(x,(1-\theta)v) \right)$$ is a discretization map on $M$.		
		\end{proposition}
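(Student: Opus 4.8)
The plan is to verify directly the two defining conditions of a discretization map from Definition~\ref{def:DiscreteMap2} for the two component maps $R^1(x,v)=R(x,-\theta v)$ and $R^2(x,v)=R(x,(1-\theta)v)$, feeding in the two properties of the retraction map $R$ from Definition~\ref{def-RetractMap}. The whole argument is local and reduces to a differential computation at the zero vector, so there is no global obstruction to worry about; the content is entirely in how the rescaling by $-\theta$ and $1-\theta$ interacts with the local rigidity condition.

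First I would dispatch property~1. Evaluating each component at the zero vector, the scalings send $0_x$ to $0_x$, so $R^1(x,0)=R(x,0_x)=R_x(0_x)=x$ and likewise $R^2(x,0)=R_x(0_x)=x$, both using property~1 of Definition~\ref{def-RetractMap}. Hence $R_d(x,0)=(x,x)$.

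The main computation is property~2, where I would exploit that each restriction $R^a_x$ factors as a linear rescaling of $T_xM$ followed by $R_x$. Writing $m_\lambda\colon T_xM\to T_xM$, $m_\lambda(v)=\lambda v$, we have $R^1_x=R_x\circ m_{-\theta}$ and $R^2_x=R_x\circ m_{1-\theta}$. Since $m_\lambda$ is linear, under the canonical identification $T_{0_x}T_xM\simeq T_xM$ its differential at $0_x$ is $T_{0_x}m_\lambda=\lambda\,{\rm Id}_{T_xM}$. Combining the chain rule with the local rigidity condition $T_{0_x}R_x={\rm Id}_{T_xM}$ from~\eqref{eq-DefRetract-prop} gives $T_{0_x}R^1_x=-\theta\,{\rm Id}_{T_xM}$ and $T_{0_x}R^2_x=(1-\theta)\,{\rm Id}_{T_xM}$, so that
\[
T_{0_x}R^2_x-T_{0_x}R^1_x=\big((1-\theta)-(-\theta)\big){\rm Id}_{T_xM}={\rm Id}_{T_xM},
\]
which is exactly property~2.

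The only delicate point, and the step I would take most care over, is the identification $T_{0_x}T_xM\simeq T_xM$ together with the claim that the differential of scalar multiplication equals multiplication by the scalar. I would make this explicit by differentiating the curve $t\mapsto R_x(\lambda t w)$ at $t=0$: substituting $s=\lambda t$ yields $\left.\frac{d}{dt}\right\vert_{t=0}R_x(\lambda t w)=\lambda\,T_{0_x}R_x(w)=\lambda w$, which is precisely the rigidity condition rescaled. Everything else is routine bookkeeping, and it is the convex-combination relation $(1-\theta)+\theta=1$ that forces the difference of the two differentials to collapse to the identity, independently of the particular value of $\theta$.
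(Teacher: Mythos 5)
Your proof is correct and follows essentially the same route as the paper: verify property~1 by evaluating at the zero vector, and property~2 by applying the chain rule to the composition of $R_x$ with the fiberwise scalings, so that the rigidity condition $T_{0_x}R_x={\rm Id}_{T_xM}$ yields $T_{0_x}R^2_x-T_{0_x}R^1_x=\bigl((1-\theta)+\theta\bigr){\rm Id}_{T_xM}={\rm Id}_{T_xM}$. The paper's proof is a one-line version of exactly this computation; your added care with the identification $T_{0_x}T_xM\simeq T_xM$ and the differential of $m_\lambda$ is sound but not a different argument.
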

	
	\begin{proof} From the definition of retraction map it is immediate that $R_d(x,0)=(R(x,0),R(x,0))=(x,x)$ and $T_{0_x}R^2_x-T_{0_x}R^1_x=(1-\theta)\, T_{0_x}R_x+ \theta \, T_{0_x}R_x\equiv {\rm Id}_{T_xM}$.		
	\end{proof}

Starting from a retraction map we may define different discretization maps, as shown in the above proposition. In the sequel, we will see that these different maps will lead to known numerical methods. For step size $h$ and retraction map $R^h(x,v)=x+h\,v$ on the Euclidean space, one possible discretization  map is $R_d^h(x,v)=(x,x+h\, v)$ that corresponds with a first order integrator method as described, for instance, in~\cite{2006McLaPerl}. However, other discretization maps may be defined from the same retraction map to construct different integrators. For example, for step size $h$ and the above retraction map $R^h$ we define:
$$R_d^h(x,v)=(R^{-h/2}(x,v), R^{h/2}(x,v))\, ,$$ 
that corresponds with a second order method as described in~\cite{2006McLaPerl}.

Let us describe another method to generate more discretization maps from a given one that will be useful in Section~\ref{Sec_higherorder} to obtain some higher-order numerical methods.
Define the inversion map $I_M: M\times M\rightarrow M\times M$ by $I_M (x,y)=(y, x)$ for all $x, y\in M$.

\begin{proposition}\label{Prop:inversion}
If $R_d:U\subset TM\rightarrow M\times M$ is a discretization map, then 
$R^*_d: \overline{U}\subset TM\rightarrow M\times M$ with $\overline{U}=\{ (x, v)\in TM\; \mid \, (x,-v)\in U\}$ 
and defined by 
\[
R^*_d(x, v)=\left(I_M \circ R_d\right)(x, -v)
\]
is also a discretization map. The map $R^*_d$ is called the adjoint discretization map of $R_d$. 
\end{proposition}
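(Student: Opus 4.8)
The plan is to verify the two defining conditions of Definition~\ref{def:DiscreteMap2} directly, after first writing $R^*_d$ out in components. Since $R_d(x,v)=(R^1(x,v),R^2(x,v))$, composing with the velocity reversal $v\mapsto -v$ and then with the swap $I_M$ gives
\[
R^*_d(x,v)=\left(R^2(x,-v),R^1(x,-v)\right),
\]
so the components of the adjoint are $(R^*)^1(x,v)=R^2(x,-v)$ and $(R^*)^2(x,v)=R^1(x,-v)$. All the work then reduces to checking the point condition and the tangent condition on these two expressions.

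For the first property, I would simply evaluate at $v=0$: using $R_d(x,0)=(x,x)$, i.e. $R^1(x,0)=R^2(x,0)=x$, we get $R^*_d(x,0)=(R^2(x,0),R^1(x,0))=(x,x)$, as required. The domain $\overline{U}$ is defined precisely so that $(x,-v)\in U$ whenever $(x,v)\in\overline{U}$, so the composition is well defined and $\overline{U}$ is again an open neighborhood of the zero section (it is the image of $U$ under the fibrewise diffeomorphism $(x,v)\mapsto(x,-v)$).

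For the second property, I would differentiate each component at $0_x$ via the chain rule. Writing $n\colon T_xM\to T_xM$ for the fibre negation $n(v)=-v$, whose derivative at the origin is $T_{0_x}n=-{\rm Id}_{T_xM}$, we have $(R^*)^1_x=R^2_x\circ n$ and $(R^*)^2_x=R^1_x\circ n$, hence
\[
T_{0_x}(R^*)^1_x=-\,T_{0_x}R^2_x,
\qquad
T_{0_x}(R^*)^2_x=-\,T_{0_x}R^1_x.
\]
Subtracting gives
\[
T_{0_x}(R^*)^2_x-T_{0_x}(R^*)^1_x
=-\,T_{0_x}R^1_x+T_{0_x}R^2_x
=T_{0_x}R^2_x-T_{0_x}R^1_x={\rm Id}_{T_xM},
\]
using property 2 for $R_d$ in the last step.

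There is no genuine obstacle here; the statement is a direct verification. The only point requiring a little care is the sign bookkeeping: the velocity reversal contributes a factor $-{\rm Id}$ to both tangent maps, while the swap $I_M$ exchanges the roles of the two components and thereby flips the order of the subtraction. These two sign effects cancel, so the identity on the right-hand side is recovered unchanged, which is exactly why the adjoint remains a discretization map.
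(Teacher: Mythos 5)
Your proof is correct and follows essentially the same route as the paper's: a direct component-wise verification of the two conditions in Definition~\ref{def:DiscreteMap2}, using the same key identities $T_{0_x}(R^*)^1_x=-T_{0_x}R^2_x$ and $T_{0_x}(R^*)^2_x=-T_{0_x}R^1_x$ so that the two sign changes cancel in the subtraction. You merely add explicit justification (the chain rule through the fibre negation, and the remark that $\overline{U}$ is an open neighborhood of the zero section) that the paper leaves implicit.
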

\begin{proof}
Using the notation $$R_d^*(q, v)=((R^*)^1(x,v), (R^*)^2(x,v))=((R^*)^1_x(v), (R^*)^2_x(v))\, , $$ the two properties in Definition~\ref{def:DiscreteMap2} are satisfied by $R^*_d$:  
\begin{enumerate}
\item $R^*_d(x, 0)=\left(I_M\circ R_d\right)(x, 0)=I_M(x, x)=(x,x).$
\item $T_{0_x}(R^*)_x^2-T_{0_x}(R^*)_x^1$ is the identity map on $T_xM$ because $T_{0_x}(R^*)_x^2=-T_{0_x}R^1_x$ and $T_{0_x}(R^*)_x^1=-T_{0_x}R^2_x$.
\end{enumerate}
\end{proof}

\begin{definition}\label{symmetric-discretization}
A discretization map is {\bf symmetric} if $R^*_d=R_d$.
\end{definition}

	\begin{example}\label{Ex:basic}
		Let us provide some examples of retraction maps typically used in the literature for the construction of numerical methods, see~\cite{IserlesBook}, that can be used to define discretization maps satisfying the properties in Definition~\ref{def:DiscreteMap2}.
		
		\begin{enumerate}
			\item The explicit Euler method: $R_d(x,v)=(x,x+v)$ being its adjoint  $R^*_d(x,v)=(x-v,x)$ .
			\item The implicit midpoint rule is a symmetric discretization map: $R_d(x,v)=\left(x-\dfrac{v}{2},x+\dfrac{v}{2}\right)$.
			\item The $\theta$-method: $R_d(x,v)=\left(x-\theta\, v,x+(1-\theta)\, v\right)$ where $\theta\in [0,1]$.
		\end{enumerate}	
	As known, for $\theta\in \{0, 1/2\}$, we recover the first two maps from the third one. All these methods are defined on the Euclidean vector space ${\mathbb R}^n$. 
 \demo
	\end{example}
\begin{example}\label{Ex:Sphere}
		Given a Riemannian manifold $(M,g)$ and the associated exponential map $exp_x: T_xM\rightarrow M$
		we can define the following map 
		\begin{equation}\label{eq:Rdexp}
		R_d(x,\xi)=\left(exp_x(-\xi/2), exp_x(\xi/2)\right)\, ,
		\end{equation}
		that satisfies the properties in Definition~\ref{def:DiscreteMap2}  and it is a symmetric discretization map. Let us give some specific examples of discretization maps that can be associated with the exponential map. 
		
		For instance, on the sphere $S^2$ with the Riemannian metric induced by the restriction of the standard metric on ${\mathbb R }^3$ we have that 
		\[
		exp_x(\xi)=\cos ( \|\xi\|) \, x+\sin  (\|\xi\|) \,  \frac{\xi}{\|\xi\|}, \qquad \xi\in T_x S^2\, .
		\]
		Thus we move along the greatest circle that are the geodesics on the sphere. Remember that $	exp_x(0_x)=x$ and the exponential map is a continuous map. Hence, we can define the following discretization map on $M$:
		\begin{equation}\label{eq:RdS2mid}
		R_d (x, \xi)=\left(\cos \left( \frac{\|\xi\|}{2}\right) x-\sin  \left(\frac{\|\xi\|}{2}\right)  \frac{ \xi}{\|\xi\|}, \cos \left( \frac{\|\xi\|}{2}\right) x+\sin \left(\frac{\|\xi\|}{2}\right) \frac{ \xi}{\|\xi\|}\right)\, .
		\end{equation}
		Another option is to use as a retraction map on the sphere the projection 
		$R_x(\xi)=  \frac{x+\xi}{\|x+\xi\|}$ that leads to the following discretization map: 
		\[
			R_d (x, \xi)=\left(\frac{x-\xi/2}{\|x-\xi/2\|}, \frac{x+\xi/2}{\|x+\xi/2\|}\right).
		\] 
		Proposition~\ref{prop:buildExtRetraction} for $\theta=1/2$ guarantees that both maps are discretization maps.  \demo
	\end{example}
	
	\begin{example}\label{Ex:cayley}
	Consider a Lie group $G$ and denote by ${\mathfrak g}$ its Lie algebra. It is a fact that any element $\xi$ in the Lie algebra is in one-to-one correspondence with a left-invariant vector field on $G$, $X_{\xi}=T_eL_g(\xi)$, where $e$ is the identity element of $G$ and $L_g: G\rightarrow G$ denotes the left-translation map.
	If $\gamma_{\xi}: {\mathbb R}\rightarrow G$ is an integral curve of $X_{\xi}$ with initial condition $\gamma_{\xi}(0)=e$, then we can generate a map between the Lie algebra and the Lie group called the exponential map: $\textrm{exp}(\xi)=\gamma_{\xi}(1)$.
	It is possible to check that the map $R: TG\rightarrow G$ given by
	\[
	(g, X)\longrightarrow g\, \textrm{exp}(T_g L_{g^{-1}}(X))
	\]
	is a retraction map where $X(g)\in T_gG$. Then, we define a symmetric discretization  map on the Lie group $G$, $R_d: TG\rightarrow G\times G$, as follows
	\[
	R_d (g, X)=\left(g\, \textrm{exp}\left(- \frac{1}{2}T_g L_{g^{-1}}(X)\right),  \, g\,\textrm{exp}\left(\frac{1}{2}T_g L_{g^{-1}}(X)\right)\right)\,.
	\]
The properties in Definitions~\ref{def-RetractMap} and~\ref{def:DiscreteMap2}	are satisfied because the tangent map $T_e\exp$ is the identity map. 
	
In the case of $SO(3)=\{ A \in GL(3, {\mathbb R})\; \mid \, AA^T=A^TA={\rm Id}_3,\  \det A=1\}$ 
	we have that 
	an element $(A, X)\in TSO(3)$  is given by a pair of matrices such that $A\in SO(3)$ and 
	$XA^T+AX^T=0$. Therefore, the Lie algebra ${\mathfrak so}(3)$ is the set of skew-symmetric matrices: 
	$\xi=A^TX\in {\mathfrak so}(3)$. The above retraction map for $SO(3)$ becomes: \[
	R(A,X)=A\,\textrm{exp} (A^TX) \,.
	\]
	The exponential map could be replaced by the Cayley transformation: 
	\[\begin{array}{rcl} \textrm{cay}: \mathfrak{so}(3) &\longrightarrow & SO(3)\\ \xi &\longmapsto &
 \textrm{cay} (\xi)=({\rm Id}_3-\xi/2)^{-1}({\rm Id}_3+\xi/2)\, , \end{array}
	\]
	where ${\rm Id}_3$ stands for the identity matrix. Then we define the following retraction map $R_{\rm cay}\colon TSO(3)\rightarrow SO(3)$: 
		\begin{equation}\label{eq:Rcaymid}
	R_{\rm cay}(A,X)= A\,\textrm{cay} (A^TX)=A({\rm Id}_3-A^TX/2)^{-1}({\rm Id}_3+A^TX/2)\,.
	\end{equation}
Using Proposition~\ref{prop:buildExtRetraction}, we obtain the following discretization map 

\noindent $R_{d,{\rm cay}}\colon TSO(3)\rightarrow SO(3)\times SO(3)$:
		\begin{align*}
	&R_{d,{\rm cay}}(A,X)= \left(R_{\rm cay}(A,-X/2), R_{\rm cay}(A,X/2)\right)\\ &= \left(A({\rm Id}_3+A^TX/4)^{-1}({\rm Id}_3-A^TX/4), A({\rm Id}_3-A^TX/4)^{-1}({\rm Id}_3+A^TX/4)\right)\,. \demo
	\end{align*} 
	\end{example}

\section{Lift of discretization maps} \label{Sec:liftRd}
	
	We can construct discretization maps, as described in Definition~\ref{def:DiscreteMap2}, on any manifold. When studying mechanical systems, it may be useful to define discretization maps on the tangent bundle for the Lagrangian framework or on the cotangent bundle for the Hamiltonian framework. As discretization maps can be defined on different manifolds, we introduce the notation $R_d^{TM}$ so that the superscript tells us the domain of such a map. Thus, the map $R_d^{TM}\colon TM \rightarrow M\times M$ is called a discretization map on $M$.  Note that ``on $M$" emphasizes where the image takes values. The manifold $M$ could be equal to the tangent bundle $TQ$ or to the cotangent bundle $T^*Q$ depending on the dynamics under study.
	
	Here, we are interested in constructing specific discretization maps on the tangent and cotangent bundles obtained from discretization maps on the base manifold. The objective is to generate geometric integrators for mechanical systems by using a suitable  notion of lifted  discretization maps to the tangent and cotangent bundles to encompass both the Lagrangian and the Hamiltonian framework.

	We first review the notion of tangent and cotangent lift of a map between manifolds, see~\cite{MR_book}. 
	
		Let $M_1$ and $M_2$ be $n$-dimensional manifolds and $F: M_1\rightarrow M_2$ be a smooth map. The
	{\bf tangent lift} $TF: TM_1\rightarrow TM_2$  of $F$ is defined by 
		\[
		TF(v_x)=T_xF (v_x)\in T_{F(x)} M_2\, , \qquad \mbox{ where } v_x\in T_xM_1\; ,
		\] and $T_xF$ is the tangent map of $F$ whose matrix is the Jacobian matrix of $F$ at $x\in M_1$ in a local chart.

		As the tangent map $T_xF$ is linear, the dual map $T_{x}^*F\colon T^*_{F(x)}M_2\rightarrow T^*_xM_1$ is defined as follows:
  \[\langle(T^*_{x}F)(\alpha_2), v_{x}\rangle=\langle \alpha_2, T_{x}F(v_{x})\rangle\mbox{ for every } v_x\in T_xM_1.\]
  Note that $(T^*_{x}F)(\alpha_2)\in T^*_xM_1$. 
  
 To define the cotangent lift in Section~\ref{Sec:coTliftRd}, we need the cotangent lift of the inverse of the discretization map. Thus, we fix the notation for such a cotangent lift. 
	
	\begin{definition}\label{def:colift}
Let $F: M_1\rightarrow M_2$ be a diffeomorphism. The vector bundle morphism 
	$\widehat{F}: T^*M_1\rightarrow T^*M_2$ defined by
	\[
	\widehat{F}=T^*F^{-1}
	\]
	is called the cotangent lift of $F^{-1}$. 
	 \end{definition}
In other words, $\widehat{F}(\alpha_x)= 	T^*_{F(x)}F^{-1} (\alpha_x)$ where $\alpha_x\in T^*_x M_1$. Obviously, $(T^*F^{-1})\circ (T^*F)={\rm Id}_{T^*M_2}$.
	
We quickly review here some notions from symplectic geometry, see~\cite{LiMarle}. Denote by $\pi_M: T^*M\rightarrow M$ the canonical projection of the cotangent bundle and define the Liouville 1-form $\theta_M$ on $T^*M$ by 
$\langle \theta_M(\alpha_x), X_{\alpha_x}\rangle = \langle \alpha_x, T_{\alpha_x}\pi_M( X_{\alpha_x})\rangle$ where $X_{\alpha_x}\in T_{\alpha_x} T^*M$ and denote by $\omega_{M}=-d\Theta_M$ the canonical symplectic 2-form on $T^*M$. 
Thus $(T^*M,\omega_M)$ is a symplectic manifold. For a diffeomorphism $F: M_1\rightarrow M_2$,  we recall the well-known proposition for symplectic manifolds in~\cite{LiMarle}. 
	
	\begin{proposition} \label{Prop_CotLift} Let $F: M_1\rightarrow M_2$ be a diffeomorphism. The cotangent lift  $ \widehat{F}: T^*M_1\rightarrow T^*M_2$ of $F^{-1}$ is a symplectomorphism for the symplectic manifolds $(T^*M_1, \omega_{M_1})$ and $(T^*M_2, \omega_{M_2})$. In other words, the symplectic 2-form is preserved by the pull-back of $\hat{F}$: 
		\[
		 \widehat{F}^*(\omega_{M_2})=\omega_{M_1}\, \mbox{ where } \widehat{F}^*\colon \Omega^2(T^*M_2) \rightarrow \Omega^2(T^*M_1)\, .
		\]
		Equivalently, the inverse of the cotangent lift $\widehat{F}^{-1}\colon T^*M_2\rightarrow T^*M_1$ is also a symplectomorphism.
	\end{proposition}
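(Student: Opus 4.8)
The plan is to reduce the entire statement to the single fact that the cotangent lift preserves the Liouville $1$-form, after which the symplectic conclusion follows by differentiation. Concretely, I would first prove
\[
\widehat{F}^{*}\theta_{M_2}=\theta_{M_1},
\]
and then, using $\omega_{M_i}=-d\theta_{M_i}$ together with the fact that pull-back commutes with the exterior derivative, conclude
\[
\widehat{F}^{*}\omega_{M_2}=\widehat{F}^{*}(-d\theta_{M_2})=-d\bigl(\widehat{F}^{*}\theta_{M_2}\bigr)=-d\theta_{M_1}=\omega_{M_1}.
\]
Thus all the content sits in the tautological $1$-form identity, and the rest is formal.

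The key geometric input I would establish first is that $\widehat{F}$ \emph{covers} $F$, that is,
\[
\pi_{M_2}\circ\widehat{F}=F\circ\pi_{M_1}.
\]
This is immediate from the construction: for $\alpha_x\in T^*_xM_1$ one has $\widehat{F}(\alpha_x)=T^*_{F(x)}F^{-1}(\alpha_x)\in T^*_{F(x)}M_2$, so $\widehat{F}$ maps the fibre over $x$ to the fibre over $F(x)$. Differentiating this relation and applying the chain rule yields
\[
T_{\widehat{F}(\alpha_x)}\pi_{M_2}\circ T_{\alpha_x}\widehat{F}=T_xF\circ T_{\alpha_x}\pi_{M_1},
\]
which is exactly what is needed to transport a tangent vector through the definition of $\theta_{M_2}$.

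With these in hand, the core computation proceeds pointwise. Fixing $\alpha_x\in T^*_xM_1$ and $X_{\alpha_x}\in T_{\alpha_x}T^*M_1$, I would unwind the definition of $\theta_{M_2}$ to write
\[
(\widehat{F}^{*}\theta_{M_2})(\alpha_x)(X_{\alpha_x})=\langle\widehat{F}(\alpha_x),\,T_{\widehat{F}(\alpha_x)}\pi_{M_2}(T_{\alpha_x}\widehat{F}(X_{\alpha_x}))\rangle,
\]
then substitute $T_xF\circ T_{\alpha_x}\pi_{M_1}$ for the inner composite via the covering identity, and finally apply the defining duality of $\widehat{F}=T^*F^{-1}$, namely $\langle T^*_{F(x)}F^{-1}(\alpha_x),w\rangle=\langle\alpha_x,T_{F(x)}F^{-1}(w)\rangle$. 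The delicate bookkeeping point, and the one place I expect to have to be careful, is that $\widehat{F}$ is the cotangent lift of $F^{-1}$ rather than of $F$: the two tangent maps appearing in the argument, $T_xF$ coming from the covering identity and $T_{F(x)}F^{-1}$ coming from the duality, compose to the identity on $T_xM_1$ precisely because $F^{-1}\circ F=\mathrm{Id}_{M_1}$. This cancellation collapses the expression to $\langle\alpha_x,T_{\alpha_x}\pi_{M_1}(X_{\alpha_x})\rangle=\theta_{M_1}(\alpha_x)(X_{\alpha_x})$, establishing the $1$-form identity.

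The equivalent statement for $\widehat{F}^{-1}$ is then immediate: since $\widehat{F}$ is a diffeomorphism, pulling $\widehat{F}^{*}\omega_{M_2}=\omega_{M_1}$ back by $(\widehat{F}^{-1})^{*}$ gives $(\widehat{F}^{-1})^{*}\omega_{M_1}=\omega_{M_2}$. Because every step apart from the tautological identity is purely formal, I do not anticipate a genuine obstacle beyond the careful tracking of the inverse noted above; the natural alternative of a direct coordinate computation in Darboux coordinates $(q^i,p_i)$ would also succeed but would obscure the structural reason the result holds.
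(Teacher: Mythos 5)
Your proof is correct and complete: the covering identity $\pi_{M_2}\circ\widehat{F}=F\circ\pi_{M_1}$, the pointwise computation showing $\widehat{F}^{*}\theta_{M_2}=\theta_{M_1}$, the crucial cancellation $T_{F(x)}F^{-1}\circ T_xF=\mathrm{Id}_{T_xM_1}$ (which is exactly where the fact that $\widehat{F}$ lifts $F^{-1}$ rather than $F$ is used), and the formal steps $\widehat{F}^{*}\omega_{M_2}=-d\bigl(\widehat{F}^{*}\theta_{M_2}\bigr)=\omega_{M_1}$ and $(\widehat{F}^{-1})^{*}\omega_{M_1}=\omega_{M_2}$ are all handled properly. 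The paper itself gives no proof of this proposition---it is recalled as a well-known result from \cite{LiMarle}---and your argument is precisely the standard one found there (preservation of the Liouville one-form implies preservation of the canonical symplectic form), so it matches the intended proof and fills it in correctly.
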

Some expressions in coordinates will be useful in the sequel. Take   local coordinates $q=(q^1,\ldots , q^n)$ on $M_1$ and $x=(x^1,\ldots , x^m)$ on $M_2$ and induced coordinates $(q, v)$ on $TM_1$ and $(x, u)$ on $TM_2$, respectively.  If $F: M_1\rightarrow M_2$ is written in local coordinates as $(q^1, \ldots, q^n)\rightarrow  (F^1(q), \ldots, F^m( q))$
Then 
\begin{eqnarray*}
	TF({q}, {v})&=&\left(F^i( q)\; ;\; \frac{\partial F^i}{\partial q^j}(q)v^j\right)\; .
\end{eqnarray*}
	Taking now induced coordinates $(q, p)$ on $T^*M_1$ and $(x, r)$ on $T^*M_2$ we have  
	\begin{eqnarray*}
		\widehat{F}(q, p)
		&=&\left(F^i( q)\; ;\;
		 p_j\frac{\partial (F^{-1})^j}{\partial q^i}
		 (F( q))\right)\; .
	\end{eqnarray*}
We could also use the matrix notation:
$$D_{q}F=\left( \frac{\partial F^i}{\partial q^j}(q)\right)_{1\leq i, j\leq \dim M_1} \quad\mbox{ and }\quad 
D_{F(q)}F^{-1}=\left(\frac{\partial (F^{-1})^i}{\partial x^j}(F( q))\right)_{1\leq i, j\leq \dim M_2}\,.$$ 
Note that $$D_{F( q)}F^{-1}=\left[D_{ q}F\right]^{-1}\; .$$
When we restrict the previous maps $TF$ and $\widehat{F}$ to a fiber we induce the maps
\[
\begin{array}{rrcl}
	T_{q}F:& T_{ q} M_1&\rightarrow& T_{F( q)}M_2\\
      & { v}&\longmapsto& D_{ q}F\; {v}^T
\end{array}
\]
and
\[
\begin{array}{rrcl}
\widehat{F}_{ q}:& T^*_{ q} M_1&\rightarrow& T^*_{F(q)}M_2\\
& { p}&\longmapsto& ((D_{ q} F)^{-1})^T  { p}^T= \left({ p} (D_{ q} F)^{-1}\right)^T\, .
\end{array}
\]
Consequently,
\begin{equation}\label{eq:hatFinverse}
\begin{array}{rrcl}
\widehat{F}^{-1}_{F(q)}:& T^*_{F( q)} M_2&\rightarrow& T^*_{ q}M_1\\
& {r}&\longmapsto& { r} D_{ q} F\, .
\end{array}
\end{equation}

	\subsection{Tangent lift of discretization maps} \label{Sec:TliftRd}
	
	We prove that if we suitably lift the discretization map $R_d\colon TQ \rightarrow Q\times Q$ on $Q$ in Definition~\ref{def:DiscreteMap2}, we obtain a new discretization map on the tangent bundle $TQ$. These constructions are able to provide a geometric framework to obtain numerical integrators for second-order differential equations (SODEs), see Section~\ref{Sec:AllSODE},  and for the dynamics of mechanical systems as shown in Sections~\ref{Sec:GeomInt} and~\ref{Sec:Compose}.
	
	Remember that the notation $R_d^{TTQ}$ for a discretization map on $TQ$ makes clear the manifold to be discretized, that is, $R_d^{TTQ}\colon TTQ \rightarrow TQ\times TQ$.  To define it from a discretization map $R_d:{TQ}\rightarrow Q\times Q$ on $Q$ is necessary to use the canonical involution map $\kappa_Q$ that shows the double vector bundle structure of the vector bundle $TTQ$ and defines a vector bundle isomorphism, as described for instance in~\cite{Tu,1999TuUr}. 
	
	Let us recall here the definition of the canonical involution. 
	Let $Q$ be a smooth manifold of dimension $n$, $\tau_{Q}:TQ\rightarrow Q$ be the canonical tangent bundle projection and $TTQ$ the double tangent bundle of $Q$. The manifold $TTQ$ naturally admits two vector bundle structures.
	The first vector bundle structure is the canonical one with vector bundle projection $\tau_{TQ}: TTQ\rightarrow TQ$. For the second vector bundle structure of $TTQ$, the vector bundle projection is given by the tangent map $T\tau_{Q}:TTQ\rightarrow TQ$.
	The canonical involution $\kappa_{Q}: TTQ\rightarrow TTQ$ is a vector bundle isomorphism (over the identity of $TQ$) between the two previous vector bundles. In fact, $\kappa_{Q}$ is characterized by the following condition: let $\Phi:U\subseteq {\mathbb R}^{2}\rightarrow Q$ be a smooth map on an open subset $U$ of ${\mathbb R}^{2}$ defined by 
	\[
		(t,s)\mapsto \Phi(t,s)\in Q,
	\]
	then
	\begin{equation*}
	\kappa_{Q}\left( \frac{\partial }{\partial t}\frac{\partial}{\partial s} \Phi(t,s) \right)= \frac{\partial}{\partial s}\frac{\partial}{\partial t} \Phi(t,s).
	\end{equation*}
	Note that $\kappa_{Q}$ is an involution of $TTQ$, that is, $\kappa_{Q}^{2}={\rm Id}_{TTQ}$.
	If $(q,v)$ are canonical fibered coordinates of $TQ$ and $(q,v,\dot{q},\dot{v})$ are the corresponding local fibered coordinates of $TTQ$, then
	\begin{equation*}\label{def:local:can:invol}
	\kappa_{Q}(q,v, \dot{q},\dot{v})=(q,\dot{q}, v, \dot{v}).
	\end{equation*}
Having all this in mind, remember that the tangent lift of a vector field $X$ on $Q$ does not define a vector field on $TQ$. It is necessary to consider the composition $\kappa_Q\circ TX$ to obtain a vector field on $TQ$ that is called complete lift $X^c$ of  the vector field $X$. A similar trick must be used to lift a discretization map  from $TQ$ to $TTQ$ as shown in the following diagram.

	\begin{equation*}
	\xymatrix{TTQ \ar[rr]^{R^{T}_d} \ar[d]<2pt>^{\kappa_{Q}} && TQ\times TQ \\
	 TTQ \ar[u]<2pt>^{\kappa_{Q}} \ar[d]^{\tau_{TQ}} \ar[rr]^{{\rm T}R_d} && T(Q\times Q) \ar@{=}[u] \ar[d]^{\tau_{Q\times Q} } \\ TQ \ar[rr]^{R_d} && Q\times Q }
	\end{equation*}
	
Note that $T(Q\times Q)$ and $TQ\times TQ$ are trivially identified since any vector on $T(Q\times Q)$ is given as a tangent vector at 0 of a curve $\sigma: {\mathbb R}\rightarrow Q\times Q$, that is, $\dot{\sigma}(0)\in T_{\sigma(0)}(Q\times Q)$. As $\sigma$ has two components $\sigma(t)=(\sigma_1 (t), \sigma_2(t))$ where $\sigma_i: {\mathbb R}\rightarrow Q$, $i=1,2$, the identification  
$\dot{\sigma}(0)\equiv (\dot{\sigma}_1(0),\dot{\sigma}_2(0))\in T_{\sigma_1(0)}Q\times T_{\sigma_2(0)}Q$ is made. The following proposition shows that ${\rm T}R_d\circ \kappa_Q$ is a discretization map on $TQ$.  From now on, such a map is denoted by $R_d^T$ to emphasize it is obtained by tangently lifting $R_d$.
	\begin{proposition}\label{prop-liftTR} If $R_d$ is a discretization map on $Q$, then $R_d^{T}={\rm T}R_d\circ \kappa_Q$ is a discretization map on $TQ$.
	\end{proposition}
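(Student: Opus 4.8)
The plan is to pass to induced coordinates and verify directly that $R_d^T={\rm T}R_d\circ\kappa_Q$ satisfies the two conditions of Definition~\ref{def:DiscreteMap2} for the manifold $TQ$, equipped with the canonical structure $\tau_{TQ}\colon TTQ\to TQ$ whose base point is $(q,v)$ and whose fibre coordinates are $(\dot q,\dot v)$. First I would assemble the coordinate expression of $R_d^T$. Writing $R_d(q,v)=(R^1(q,v),R^2(q,v))$ and combining the coordinate formula for the tangent lift ${\rm T}R_d$ with $\kappa_Q(q,v,\dot q,\dot v)=(q,\dot q,v,\dot v)$, the composite reads, for $a=1,2$,
$$(R^T)^a(q,v,\dot q,\dot v)=\left(R^a(q,\dot q)\,;\,\frac{\partial R^a}{\partial q}(q,\dot q)\,v+\frac{\partial R^a}{\partial v}(q,\dot q)\,\dot v\right)\in TQ.$$
The swap of $v$ and $\dot q$ produced by $\kappa_Q$ is exactly why one composes with the involution: without it the fibre of $T(Q\times Q)$ would not align with the discretization structure on $TQ$.

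Second, I would check the zero-section condition. Setting $(\dot q,\dot v)=(0,0)$ and using $R_d(q,0)=(q,q)$, hence $R^a(q,0)=q$ for all $q$, and differentiating this last identity in $q$ to obtain $\frac{\partial R^a}{\partial q}(q,0)={\rm Id}$, both components collapse to $(q,v)$. Thus $R_d^T\big((q,v),0\big)=\big((q,v),(q,v)\big)$, which is condition~1.

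Third, and this is where the real content lies, I would verify the rigidity condition by computing the tangent map of each restricted component $(R^T)^a_{(q,v)}$ at the zero of the fibre, i.e.\ the Jacobian in $(\dot q,\dot v)$ at the origin. A short computation yields the block-triangular form
$$T_0(R^T)^a_{(q,v)}=\left(\begin{array}{cc}\frac{\partial R^a}{\partial v}(q,0)&0\\[4pt]\frac{\partial^2 R^a}{\partial q\,\partial v}(q,0)\,v&\frac{\partial R^a}{\partial v}(q,0)\end{array}\right),$$
the off-diagonal entry coming from the mixed second derivative of $R^a$. Subtracting the $a=1$ matrix from the $a=2$ matrix, the diagonal blocks become $\frac{\partial R^2}{\partial v}(q,0)-\frac{\partial R^1}{\partial v}(q,0)={\rm Id}$ by condition~2 of Definition~\ref{def:DiscreteMap2} applied to $R_d$, while the off-diagonal block is the $q$-derivative of that same difference, namely $\frac{\partial}{\partial q}({\rm Id})\,v=0$. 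Hence $T_0(R^T)^2_{(q,v)}-T_0(R^T)^1_{(q,v)}={\rm Id}_{T_{(q,v)}TQ}$, establishing condition~2.

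The hard part will be keeping the two derivative structures straight, namely the derivatives introduced by the tangent lift versus the fibre-direction derivative used to test rigidity, and recognising that the off-diagonal block vanishes precisely because the rigidity identity for $R_d$ holds for every $q$ and may therefore be differentiated in $q$. A coordinate-free alternative would exploit the defining property of $\kappa_Q$ through the second mixed derivatives $\frac{\partial}{\partial t}\frac{\partial}{\partial s}\Phi$, but the coordinate computation above is the most transparent route and also produces the explicit formula for $R_d^T$ that will be reused later.
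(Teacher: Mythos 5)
Your proof is correct and follows essentially the same route as the paper's: a coordinate verification of the two conditions in Definition~\ref{def:DiscreteMap2}, producing the same block-triangular fibre Jacobian and resting on the same key observation that the off-diagonal mixed-derivative block vanishes because the rigidity identity $\partial_v(R^2-R^1)(q,0)={\rm Id}$ holds for every $q$ and may therefore be differentiated in $q$. The only difference is a harmless relabelling of the $TTQ$ coordinates (you take base $(q,v)$ with fibre $(\dot q,\dot v)$, while the paper takes base $(q,\dot q)$ with fibre $(v,\dot v)$), which does not affect the argument.
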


\begin{proof} In local coordinates $(q,v,\dot{q},\dot{v})$ of $TTQ$ we have that ${\rm T}R_d(q,v,\dot{q},\dot{v})=(R_d(q,v),{\rm D}_{(q,v)} R_d(q,v) \, (\dot{q},\dot{v})^T)$ and $$R_d^{T}(q,\dot{q},v,\dot{v})=(R_d(q,v),{\rm D}_{(q,v)} R_d \; (\dot{q},\dot{v})^T)\; .$$ Remember the abuse of notation because $T(Q\times Q)$ and $TQ \times TQ$ are trivially identified.

Let us prove that the properties in Definition~\ref{def:DiscreteMap2} are satisfied by $R_d$ knowing that $R_d(q,v)=(R^1(q,v),R^2(q,v))$.
	\begin{enumerate}
		\item 
		We know that  $R_d(q,0)=(q,q)$ for all $q\in Q$. Consequently, 
		\begin{equation*} R_d^{T}(q,\dot{q},0,0)=(R_d(q,0),{\rm D}_{(q,0)} R_d \, (\dot{q},0)^T)=(q,q,\dot{q},\dot{q})\equiv (q,\dot{q}; q,\dot{q})\, , \end{equation*} 
		where  we use the natural  identification between $T(Q\times Q)$ and $TQ\times TQ$.

	\item For the second property, we know that 
	$$R_d^{T}(q,\dot{q},v,\dot{v})=((TR^1)(q,v; \dot{q},\dot{v}), (TR^2)(q, v; \dot{q},\dot{v}))\, .$$
We need to compute
$${\rm T}_{(0,0)_{(q,\dot{q})}}({\rm T}R^a)_{(q, \dot{q})}: T_{(0,0)_{(q,\dot{q})}}T_{(q, \dot{q})}TQ\equiv T_{(q, \dot{q})}TQ\rightarrow T_{(q, \dot{q})}TQ\; ,
$$for $a=1,2$,
 to prove that the map ${\rm T}_{(0,0)_{(q,\dot{q})}}({\rm T}R^2)_{(q, \dot{q})}-{\rm T}_{(0,0)_{(q,\dot{q})}}({\rm T}R^1)_{(q,\dot{q})}$ is the identity map understood as an application from $T_{(q,\dot{q})}TQ$ to itself. 
	
	At $(q,\dot{q},0,0)$, the   linear map ${\rm T}_{(0,0)_{(q,\dot{q})}}({\rm T}R^a)_{(q, \dot{q})}$ is given by the following matrix
	$$\begin{pmatrix}
\partial_{v^j}(R^a)^i(q, 0) & 0\\  \partial_{q^k} \partial_{v^j}(R^a)^i(q,0)\dot{q}^k & \partial_{v^j}(R^a)^i(q,0) 
	\end{pmatrix}\, ,$$
after calculating 
\[\left.
\frac{d}{dt}\right\vert_{t=0} \left(
R_d^a(q, tv), 
\partial_{q^j} R_d^a (q, tv)\dot{q}^j+\partial_{v^j} R_d^a (q, tv)t\dot{v}^j\right)\: .
\]
	Using again the properties of the discretization map  $R_d$,  
	the Jacobian matrix of $({\rm T}R^2)_{(q, \dot{q})}-({\rm T}R^1)_{(q,\dot{q})}$ at $(0,0)_{(q,\dot{q})}$ is: 
	$$\begin{pmatrix}
	\partial_vR^2(q, 0)- \partial_vR^1(q, 0)& 0\\  \partial_q(\partial_vR^2(q,0)-\partial_vR^1(q,0))\dot{q} & 	\partial_vR^2(q, 0)- \partial_vR^1(q, 0)
	\end{pmatrix}= {\rm Id}_{2n\times 2n}\, ,$$ 
	as needed. Note that  $\partial_q\partial_v(R^2-R^1)(q,0)=0$ because $\partial_v(R^2-R^1)(q,0)={\rm Id}_{n\times n}$.
	\end{enumerate}
	\end{proof}
\begin{remark}
If we use the discretization map obtained from the exponential map of a Riemannian metric $g$ as in Equation~\eqref{eq:Rdexp}, then the tangent lift of this specific discretization map is associated with the complete lift of $g$, denoted by $g^C$, which is a semi-riemannian metric on $TQ$ (see details in \cite{YaIs73,AMM2}). \demo

\end{remark}

\begin{proposition}\label{lift-adjoint}
Let $R_d$ be a discretization map and $R_d^*$ be the adjoint discretization map. Then the tangent lift of a symmetric discretization map is also symmetric, that is, 
$
(R_d^{*})^T= (R_d^T)^{*}$. 
\end{proposition}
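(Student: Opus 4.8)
The plan is to prove the stronger identity $(R_d^*)^T = (R_d^T)^*$ for an \emph{arbitrary} discretization map $R_d$; the symmetry claim then follows in one line, since if $R_d^*=R_d$ then $(R_d^T)^* = (R_d^*)^T = R_d^T$, so $R_d^T$ is symmetric. The starting point is to rewrite both the adjoint and the tangent lift as compositions of canonical maps. Introducing the fibrewise sign map $s_N\colon TN\to TN$, $s_N(x,w)=(x,-w)$, the adjoint of Proposition~\ref{Prop:inversion} reads $R_d^* = I_Q\circ R_d\circ s_Q$, while the tangent lift of Proposition~\ref{prop-liftTR} reads $R_d^T = {\rm T}R_d\circ \kappa_Q$. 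Viewing $R_d^T$ as a discretization map on $TQ$, its adjoint is $(R_d^T)^* = I_{TQ}\circ R_d^T\circ s_{TQ} = I_{TQ}\circ {\rm T}R_d\circ \kappa_Q\circ s_{TQ}$, where now $s_{TQ}$ negates the fibre of $\tau_{TQ}\colon TTQ\to TQ$.

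Next I would expand $(R_d^*)^T$ using functoriality of the tangent functor, ${\rm T}(f\circ g)={\rm T}f\circ {\rm T}g$:
\[
(R_d^*)^T = {\rm T}(I_Q\circ R_d\circ s_Q)\circ \kappa_Q = {\rm T}I_Q\circ {\rm T}R_d\circ {\rm T}s_Q\circ \kappa_Q.
\]
Comparing this with the expression for $(R_d^T)^*$ above, the proof reduces to two auxiliary identities: first, that under the canonical identification $T(Q\times Q)\cong TQ\times TQ$ the tangent lift of the swap is again the swap, ${\rm T}I_Q = I_{TQ}$; and second, the intertwining relation ${\rm T}s_Q\circ \kappa_Q = \kappa_Q\circ s_{TQ}$. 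Granting these, the two chains of compositions coincide termwise and the identity follows.

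Both auxiliary identities I would verify in the canonical fibered coordinates $(q,v,\dot q,\dot v)$ of $TTQ$. The first is immediate, since $I_Q(x,y)=(y,x)$ is linear, so ${\rm T}I_Q(x,y,\dot x,\dot y)=(y,x,\dot y,\dot x)$, which is exactly the swap on $TQ\times TQ$. The genuinely delicate point --- and the step I expect to be the main obstacle --- is the second identity, because the two sign maps negate \emph{different} fibres: $s_Q$ (hence ${\rm T}s_Q$) negates the velocity $v$ of the base point of $TQ$, whereas $s_{TQ}$ negates the $\tau_{TQ}$-fibre $(\dot q,\dot v)$. The role of $\kappa_Q$ is precisely to swap $v\leftrightarrow \dot q$, so one must check that negating $v$ \emph{after} $\kappa_Q$ matches negating $\dot q$ \emph{before} it. Explicitly, ${\rm T}s_Q(q,v,\dot q,\dot v)=(q,-v,\dot q,-\dot v)$ and $s_{TQ}(q,v,\dot q,\dot v)=(q,v,-\dot q,-\dot v)$, and a direct computation gives
\[
{\rm T}s_Q(\kappa_Q(q,v,\dot q,\dot v)) = (q,-\dot q,v,-\dot v) = \kappa_Q(s_{TQ}(q,v,\dot q,\dot v)),
\]
so the relation holds, the two compositions agree, and $(R_d^*)^T=(R_d^T)^*$. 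Finally I would record the symmetric case as the one-line corollary above. As a cross-check, one can run the whole argument directly in coordinates, computing $(R_d^T)^*$ and $(R_d^*)^T$ from the explicit formula for $R_d^T$ in the proof of Proposition~\ref{prop-liftTR} and matching the base components $(R^2(q,-v),R^1(q,-v))$ together with their lifted velocity components via the chain rule.
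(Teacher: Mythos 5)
Your proof is correct, and it establishes exactly what the paper's proof does: the identity $(R_d^*)^T=(R_d^T)^*$ for an \emph{arbitrary} discretization map, from which the symmetric case follows in one line. The organization, however, is genuinely different. The paper's proof is a single inline coordinate computation: it writes $(R_d^*)^T(q,\dot q,v,\dot v)=TR_d^*(q,v,\dot q,\dot v)$, expands the two components $T(R^*)^a$ by the chain rule applied to the definition of the adjoint from Proposition~\ref{Prop:inversion}, and recognizes the resulting pair as $I_{TQ}$ applied to $TR_d$ with negated arguments. You instead factor both sides into canonical maps, $R_d^*=I_Q\circ R_d\circ s_Q$ and $R_d^T={\rm T}R_d\circ\kappa_Q$, and reduce the claim, via functoriality of the tangent functor, to two coordinate-checked lemmas: ${\rm T}I_Q=I_{TQ}$ under the identification $T(Q\times Q)\cong TQ\times TQ$, and the intertwining relation ${\rm T}s_Q\circ\kappa_Q=\kappa_Q\circ s_{TQ}$. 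The substance is the same --- both arguments turn on how fibre negation interacts with the canonical involution --- but your version isolates that one nontrivial fact as a named, reusable lemma and never manipulates the components $R^1,R^2$. It also keeps the sign bookkeeping honest, and this is not a hypothetical benefit: in the paper's displayed chain the intermediate terms read $TR^2(q,v,-\dot q,-\dot v)$, whereas with the paper's own convention (base point in the first two slots, velocity in the last two) the chain rule gives $TR^2(q,-v,\dot q,-\dot v)$; the negation must sit exactly where ${\rm T}s_Q$ puts it, which is precisely the content of your intertwining lemma. The paper's final equality is unaffected by this slip, but your decomposition makes that step impossible to get wrong, and the same pattern suggests how one would organize the cotangent analogue in Proposition~\ref{prop_cotangent_sym}.
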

\begin{proof}
It is simple to check that
\begin{eqnarray*}
(R_d^*)^T(q, \dot{q}, v, \dot{v})&=&
TR_d^*(q, v, \dot{q}, \dot{v})
=(T(R^*)^1(q, v, \dot{q}, \dot{v}), T(R^*)^2(q, v, \dot{q}, \dot{v}))\\
&=&(TR^2(q, v, -\dot{q}, -\dot{v}), TR^1(q, v, -\dot{q}, -\dot{v}))\\
&=&I_{TQ}(TR^1(q, v, -\dot{q}, -\dot{v}), TR^2(q, v, -\dot{q}, -\dot{v}))\\
&=&(R_d^T)^*(q, \dot{q}, v, \dot{v})
\end{eqnarray*}
where $I_{TQ}(u_q, v_q)=(v_q, u_q)$ for all  $u_q, v_q\in T_qQ$. 
\end{proof}

	\subsection{Cotangent lift of discretization maps} \label{Sec:coTliftRd}

To encompass the Lagrangian and Hamiltonian dynamics together to build numerical integrators, we are interested in defining a very particular notion of discretization map on the cotangent bundle.

Given a discretization map $R_d: TQ\rightarrow Q\times Q$ we know that the cotangent lift
$\widehat{R_d}: T^*TQ\rightarrow T^*(Q\times Q)$ is a symplectomorphism between the symplectic manifolds $(T^*TQ, \omega_{TQ})$ and $(T^*(Q\times Q), \omega_{Q\times Q})$ as mentioned in Proposition~\ref{Prop_CotLift}.

 	According to Definition~\ref{def:colift}, in local coordinates $(q,v,p_q,p_v)$ for $T^*TQ$ the cotangent lift of $R_d$ is given by:
 	\begin{equation*}
 	\begin{array}{rrl}
 		\widehat{R_d}\colon& T^*TQ \longrightarrow &T^*(Q\times Q) \\
& 		(q,v,p_q,p_v)  \longmapsto & \left(R_d(q,v), \left( p_q, \; p_v\right) \, (D_{(q,v)}R_d)^{-1}\right)
 		\end{array}
 	\end{equation*}
 	where $(D_{(q,v)}R_d)^{-1}$ is the inverse of the Jacobian matrix of $R_d$.

We use the cotangent lift $\widehat{R_d}$ of the discretization map on $Q$ to define a discretization map on $T^*Q$ that must be a map from $TT^*Q$ to $T^*Q\times T^*Q$.

For this purpose it is necessary to use the canonical symplectomorphism $\alpha_Q: TT^*Q\rightarrow T^*TQ$ between double vector bundles (see \cite{TuHamilton,1999TuUr}).
Locally,
\begin{equation*}
\begin{array}{crcl} \alpha_Q\colon & TT^*Q & \longrightarrow &  T^*TQ\\&
(q,p,\dot{q},\dot{p}) & \longrightarrow & (q,\dot{q},\dot{p},p).
\end{array}
\end{equation*} As described in~\cite{TuHamilton}, the symplectomorphism $\alpha_Q$ is between the sympletic manifold $(TT^*Q, {\rm d}_T\omega_Q)$ and the natural symplectic manifold $(T^*TQ, \omega_{TQ})$. Recall that in local coordinates $(q,p,\dot{q},\dot{p})$ for $TT^*Q$, the symplectic form $d_T\omega_Q$ has the following expression: ${\rm d}_T \omega_Q= {\rm d}q\wedge {\rm d}\dot{p}+{\rm d}\dot{q}\wedge {\rm d}p$. Moreover, we need the diffeomorphism 
\[
\begin{array}{rrcl}
\Phi:&T^*Q\times T^*Q &\longrightarrow &T^*(Q\times Q)\\
& (q_0, p_0; q_1, p_1)&\longmapsto& (q_0, q_1, -p_0, p_1)
\end{array}
\]
which is also a  symplectomorphism between $(T^*(Q\times Q), \omega_{Q\times Q})$     and   
$(T^*Q\times T^*Q, \Omega_{12}=pr_2^*\omega_Q-pr^*_1\omega_Q)$, where ${\rm pr}_i\colon T^*(Q\times Q) \rightarrow T^*Q\times T^*Q$ denotes the projection into the $i$--th factor of the cartesian product in the image. 

The following diagram shows how to define the discretization map on $T^*Q$ from the one on $Q$.

	\begin{equation*}
\xymatrix{ TT^*Q  \ar[rr]^{R_d^{T^*}}\ar[d]_{\alpha_{Q}} && T^*Q\times T^*Q   \\ T^*TQ \ar[d]^{\pi_{TQ}}\ar[rr]^{	\widehat{R_d}}&& T^*(Q\times Q)\ar[u]_{\Phi^{-1}}\ar[d]^{\pi_{Q\times Q}}\\ TQ \ar[rr]^{R_d} && Q\times Q }
\end{equation*}

Now we prove that $R_d^{T^*}$ is a discretization map on $T^*Q$ according to Definition~\ref{def:DiscreteMap2}. From now on, it will be called the cotangent lift of $R_d$.

\begin{proposition} \label{prop_liftT^*Rd}
	Let $R_d\colon TQ\rightarrow Q\times Q$ be a discretization map on $Q$ as in Definition~\ref{def:DiscreteMap2}. Then $R_d^{T^*}=\Phi^{-1}\circ \widehat{R_d}\circ \alpha_Q\colon TT^*Q\rightarrow T^*Q\times T^*Q$ is a discretization map on  $T^*Q$.
\end{proposition}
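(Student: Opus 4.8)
The plan is to follow the coordinate strategy used in Proposition~\ref{prop-liftTR}: first assemble the local expression of $R_d^{T^*}$ by composing the three maps in the defining diagram, and then verify the two conditions of Definition~\ref{def:DiscreteMap2} directly. Starting from $(q,p,\dot q,\dot p)\in TT^*Q$, applying $\alpha_Q$ gives $(q,\dot q,\dot p,p)\in T^*TQ$, so $\widehat{R_d}$ reads off the base point $(q,\dot q)$ of $TQ$ and the covector $(\dot p,p)$; composing with $\Phi^{-1}$, which sends $(q_0,q_1,P_0,P_1)$ to $(q_0,-P_0;q_1,P_1)$, I would obtain
\begin{equation*}
R_d^{T^*}(q,p,\dot q,\dot p)=\bigl(R^1(q,\dot q),\,-P_0;\ R^2(q,\dot q),\,P_1\bigr),\qquad (P_0,P_1)=(\dot p,p)\,\bigl(D_{(q,\dot q)}R_d\bigr)^{-1}.
\end{equation*}
Here the base point of the discretization map on $T^*Q$ is $(q,p)$ and the fibre (velocity) coordinates are $(\dot q,\dot p)$.

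For the first property I would set $(\dot q,\dot p)=(0,0)$. Then $R^a(q,0)=q$ by property~1 of $R_d$, and writing $A_a=\partial_v R^a(q,0)$ the Jacobian $D_{(q,0)}R_d=\left(\begin{smallmatrix}{\rm Id}&A_1\\ {\rm Id}&A_2\end{smallmatrix}\right)$ has inverse $\left(\begin{smallmatrix}A_2&-A_1\\ -{\rm Id}&{\rm Id}\end{smallmatrix}\right)$, using $A_2-A_1={\rm Id}$ (property~2). A short multiplication gives $(P_0,P_1)=(0,p)\,(D_{(q,0)}R_d)^{-1}=(-p,p)$, hence $R_d^{T^*}(q,p,0,0)=(q,p;q,p)$, which is condition~1 with base point $(q,p)$.

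For the second property I would compute, for $a=1,2$, the $2n\times 2n$ Jacobians $J^a$ of the two components with respect to $(\dot q,\dot p)$ at $(\dot q,\dot p)=(0,0)$. The position blocks are immediate ($\partial_{\dot q}R^a|_0=A_a$, $\partial_{\dot p}R^a=0$), while the momentum blocks come from $P_0=\dot p\,M_{11}+p\,M_{21}$ and $P_1=\dot p\,M_{12}+p\,M_{22}$, where $M=(D_{(\cdot)}R_d)^{-1}$ has blocks $M_{ij}$. Assembling
\begin{equation*}
J^1=\begin{pmatrix}A_1&0\\ -C_1&-A_2\end{pmatrix},\qquad J^2=\begin{pmatrix}A_2&0\\ C_2&-A_1\end{pmatrix},\qquad C_1=p\,\partial_v M_{21}(q,0),\quad C_2=p\,\partial_v M_{22}(q,0),
\end{equation*}
where the momentum--momentum entries are fixed by $M_{11}(q,0)=A_2$ and $M_{12}(q,0)=-A_1$, I find that $J^2-J^1$ has diagonal blocks $A_2-A_1={\rm Id}$ and off-diagonal blocks $0$ and $C_1+C_2$. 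Thus the whole verification reduces to showing $\partial_v(M_{21}+M_{22})(q,0)=0$.

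This last identity is the only real obstacle, and I would handle it with the derivative-of-inverse formula $\partial_{v^k}M=-M\,(\partial_{v^k}D R_d)\,M$ evaluated at $\dot q=0$, where the explicit form of $M(q,0)$ relies on property~1 ($\partial_q R^a(q,0)={\rm Id}$). The pure second-derivative-in-$v$ terms cancel between the two components, and the surviving contribution is $-(\partial_{v^k}\partial_q R^2-\partial_{v^k}\partial_q R^1)(q,0)$; exchanging the order of differentiation this equals $-\partial_q\bigl((A_2-A_1)_{\cdot k}\bigr)$, which vanishes because $A_2-A_1\equiv{\rm Id}$ is constant in $q$ (property~2). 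Hence $C_1+C_2=0$, so $J^2-J^1={\rm Id}_{2n}$ and condition~2 holds. Since $\alpha_Q$, $\widehat{R_d}$ and $\Phi^{-1}$ are diffeomorphisms, the local-diffeomorphism part is automatic and only the two coordinate checks above are needed.
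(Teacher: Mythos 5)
Your proof is correct and follows essentially the same route as the paper: both assemble the local expression of $R_d^{T^*}=\Phi^{-1}\circ\widehat{R_d}\circ\alpha_Q$, verify condition 1 by inverting the Jacobian $D_{(q,0)}R_d$ using $\partial_v(R^2-R^1)(q,0)={\rm Id}$, and verify condition 2 by the derivative-of-inverse formula, with the decisive cancellation coming from the same key fact, namely $\partial_q\partial_v(R^2-R^1)(q,0)=0$ because $\partial_v(R^2-R^1)(q,0)$ is constant in $q$. Your reformulation of the cancellation as $C_1+C_2=0$ (rather than the paper's observation that both momentum--position blocks coincide) is only a sign-convention difference, not a different argument.
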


\begin{proof}
	 Let us compute the cotangent lift  of the tangent map of $R_d$ for local coordinates $(q,v,p_q,p_v)$ of $T^*TQ$:
	\begin{equation*}
	\widehat{R_d}(q,v,p_q,p_v)= \left(R_d(q,v), (p_q,\; p_v) ({\rm D}_{(q,v)}R_d)^{-1}\right)\,.
	\end{equation*}
Expressing  the inverse of $({\rm D}R_d)^{-1}$ as a matrix with two blocks  $({\rm D}R_d)_i^{-1}$ of size $2n\times n$, $i=1,2$, that is 
\[
({\rm D}_{(q,v)}R_d)^{-1}= \begin{pmatrix}
({\rm D}_{(q,v)}R_d)_1^{-1}&
	({\rm D}_{(q,v)}R_d)_2^{-1}
	\end{pmatrix}
\]
and 
$$({\rm D}_{(q,v)}R_d)^{-1}=\begin{pmatrix}
\partial_q R^1(q,v)&\partial_v R^1(q,v) \\
\partial_q R^2(q,v)&\partial_v R^2(q,v)
\end{pmatrix}^{-1}\, .
$$
We can write 
		\begin{equation*}
	R_d^{T^*}(q,p,\dot{q},\dot{p})= \left(R^1(q,\dot{q}),- (\dot{p}, \; p) ({\rm D}_{(q,\dot{q})}R_d)_1^{-1}; R^2(q,\dot{q}),(\dot{p}, \; p) ({\rm D}_{(q,\dot{q})}R_d)_2^{-1}\right)\,.
	\end{equation*}
	Let us check if it satisfies the properties in Definition~\ref{def:DiscreteMap2}:
	\begin{enumerate}
		\item  
		Note that  the Jacobian matrix of $R_d$ at $(q,0)$ is 
		$${\rm D}_{(q,0)} R_d=\begin{pmatrix} {\rm Id} & \partial_vR^1(q,0)\\{\rm Id} & \partial_vR^2(q,0)\end{pmatrix}\, . $$
As $\partial_vR^2(q,0)-\partial_vR^1(q,0)={\rm Id}$, the inverse is
	\begin{equation*}({\rm D}_{(q,0)} R_d)^{-1}=\begin{pmatrix} {\rm Id}+\partial_vR^1(q,0)& -\partial_vR^1(q,0)\\-{\rm Id}&{\rm Id}\end{pmatrix}\, . \end{equation*}
Thus,
	\begin{equation*}
	R_d^{T^*}(q,p,0,0)= (R^1(q,0),-(0, \; p) ({\rm D}_{(q,0)}R_d)_1^{-1}; R^2(q,0),(0, \; p) ({\rm D}_{(q,0)}R_d)_2^{-1})\,,
\end{equation*}
and it is straightforward that 	$R_d^{T^*}(q,p,0,0)=(q,p; q,p)$.

		\item We must prove that $T_{(q,p,0,0)}\left(R^{T^*}_d\right)^2_{(q,p)}-T_{(q,p,0,0)}\left(R^{T^*}_d\right)_{(q,p)}^1$ is the identity map from $T_{(q,p,0,0)}TT^*Q\simeq T_{(q,p)}T^*Q$ to itself. 
	
Let us compute the following derivatives for $a=1,2$:
$$
\left.\frac{d}{dt}\right\vert_{t=0}\left(R^{T^*}_d\right)^a(q,p, t\dot{q}, t\dot{p})\, .
$$

For instance, for $i=1$ we have 
\[
\left.\frac{d}{dt}\right\vert_{t=0}\left(R^{T^*}_d\right)^1(q,p, t\dot{q}, t\dot{p})=
\left.\frac{d}{dt}\right\vert_{t=0}\left[R^1(q,t\dot{q}),- (t\dot{p}, \; p) ({\rm D}_{(q,t\dot{q})}R_d)_1^{-1}\right]\; .
\]
Using the expression for the derivative of an inverse matrix, we have that $\left.\frac{d}{dt}\right\vert_{t=0}({\rm D}_{(q, t\dot{q})}R_d)^{-1}$ is equal to
\begin{eqnarray*}
&&-\begin{pmatrix} {\rm Id}+A& -A\\-{\rm Id}&{\rm Id}\end{pmatrix}
\begin{pmatrix}
\partial_{v}\partial_qR^1(q,0)&\partial_{v}\partial_vR^1(q,0)\\
\partial_{v}\partial_qR^2(q,0)&\partial_{v}\partial_vR^2(q,0)
\end{pmatrix}
\begin{pmatrix} {\rm Id}+A& -A\\-{\rm Id}&{\rm Id}\end{pmatrix}\\
&=&\begin{pmatrix}  {\mathbf *}&{\mathbf *}\\\partial_{v}\partial_vR^1(q,0)-\partial_{v}\partial_vR^2(q,0)&\partial_{v}\partial_vR^2(q,0)-\partial_{v}\partial_vR^1(q,0)\end{pmatrix}
\end{eqnarray*}
where $A=\partial_vR^1(q,0)$ and $(*)$ denotes terms that are not explicitly needed in the computations. We have used that $\partial_q\partial_v(R^2-R^1)(q,0)=0$ since $\partial_v(R^2-R^1)(q,0)={\rm Id}_{n\times n}$. Thus, 
\begin{eqnarray*}
&&\left.\frac{d}{dt}\right\vert_{t=0}\left[R^1(q,t\dot{q}),- (t\dot{p}, \; p) ({ D}_{(q,t\dot{q})}R_d)_1^{-1}\right]\\
&&=
\begin{pmatrix}
\partial_vR^1(q,0)&0\\
p(\partial_{v}\partial_vR^1(q,0)-\partial_{v}\partial_vR^2(q,0))&-{\rm Id}-\left(\partial_v R^1(q,0)\right)^T
\end{pmatrix}
\begin{pmatrix}
\dot{q}\\
\dot{p}
\end{pmatrix}
\end{eqnarray*}

Analogously,
\begin{eqnarray*}
&&\left.\frac{d}{dt}\right\vert_{t=0}\left[R^2(q,t\dot{q}), (t\dot{p}, \; p) ({ D}_{(q,t\dot{q})}R_d)_2^{-1}\right]\\
&&=
\begin{pmatrix}
\partial_vR^2(q,0)&0\\
p(\partial_{v}\partial_vR^1(q,0)-\partial_{v}\partial_vR^2(q,0))&-\left(\partial_v R^1(q,0)\right)^T
\end{pmatrix}
\begin{pmatrix}
\dot{q}\\
\dot{p}
\end{pmatrix}
\end{eqnarray*}
As a result,  
\[
\begin{pmatrix}
\partial_vR^2(q,0)&0\\
C&-\partial_v R^1(q,0)
\end{pmatrix}
-\begin{pmatrix}
\partial_vR^1(q,0)&0\\
C&-{\rm Id}-\partial_v R^1(q,0)
\end{pmatrix}
=\begin{pmatrix}
{\rm Id}&0\\0&{\rm Id}
\end{pmatrix}
\]
where $C=p(\partial_{v}\partial_vR^1(q,0)-\partial_{v}\partial_vR^2(q,0))$.
\end{enumerate}
\end{proof}

As the composition of symplectomorphisms is  a symplectomorphism~\cite{LiMarle}, the following result is straightforward.
\begin{proposition}\label{prop:sympl}
	Let $R_d\colon TQ \rightarrow Q\times Q$ be a retraction map on $Q$, then $R_d^{T^*}=\Phi^{-1}\circ 	\widehat{R_d} \circ \alpha_Q\colon T(T^*Q)\rightarrow T^*Q\times T^*Q$ is a symplectomorphism between $(T(T^*Q), {\rm d}_T \omega_Q)$ and $(T^*Q\times T^*Q, \Omega_{12})$.
\end{proposition}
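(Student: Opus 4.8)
The plan is to read off $R_d^{T^*}$ as a composite of three maps, each of which has already been identified as a symplectomorphism between an explicit pair of symplectic manifolds, and then invoke the fact (recalled from~\cite{LiMarle}) that a composition of symplectomorphisms is again a symplectomorphism. Concretely, $R_d^{T^*}=\Phi^{-1}\circ\widehat{R_d}\circ\alpha_Q$, so I would trace the three arrows
\[
(TT^*Q,\, {\rm d}_T\omega_Q)\ \xrightarrow{\ \alpha_Q\ }\ (T^*TQ,\,\omega_{TQ})\ \xrightarrow{\ \widehat{R_d}\ }\ (T^*(Q\times Q),\,\omega_{Q\times Q})\ \xrightarrow{\ \Phi^{-1}\ }\ (T^*Q\times T^*Q,\,\Omega_{12}),
\]
verifying that the symplectic form produced at each intermediate stage is exactly the one consumed by the next map.

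First I would record that $\alpha_Q\colon(TT^*Q,{\rm d}_T\omega_Q)\to(T^*TQ,\omega_{TQ})$ is a symplectomorphism, which is the canonical identification recalled above from~\cite{TuHamilton}. Second, since $R_d$ is a local diffeomorphism from a neighborhood of the zero section by Proposition~\ref{Prop:inv_Rd}, its cotangent lift $\widehat{R_d}=T^*R_d^{-1}$ is well defined there by Definition~\ref{def:colift}, and Proposition~\ref{Prop_CotLift} guarantees that $\widehat{R_d}\colon(T^*TQ,\omega_{TQ})\to(T^*(Q\times Q),\omega_{Q\times Q})$ is a symplectomorphism. Third, the diffeomorphism $\Phi$ was shown above to satisfy $\Phi^*\omega_{Q\times Q}=\Omega_{12}$ with $\Omega_{12}=pr_2^*\omega_Q-pr_1^*\omega_Q$, so $\Phi\colon(T^*Q\times T^*Q,\Omega_{12})\to(T^*(Q\times Q),\omega_{Q\times Q})$ is a symplectomorphism, and hence so is its inverse $\Phi^{-1}$ taken in the opposite direction.

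The symplectic forms then chain up exactly: $\omega_{TQ}$ emerges from $\alpha_Q$ and is consumed by $\widehat{R_d}$, while $\omega_{Q\times Q}$ emerges from $\widehat{R_d}$ and is consumed by $\Phi^{-1}$. Pulling back along the composite therefore gives $(R_d^{T^*})^*\Omega_{12}=\alpha_Q^*\,\widehat{R_d}^*\,(\Phi^{-1})^*\Omega_{12}=\alpha_Q^*\,\widehat{R_d}^*\,\omega_{Q\times Q}=\alpha_Q^*\,\omega_{TQ}={\rm d}_T\omega_Q$, which is the assertion. No genuinely new computation is required, since each constituent map was already established to be symplectic. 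The only points demanding care — and the closest thing to an obstacle — are purely bookkeeping: matching the sign convention in $\Omega_{12}=pr_2^*\omega_Q-pr_1^*\omega_Q$ against the coordinate formula for $\Phi$, keeping straight the directions of $\Phi$ versus $\Phi^{-1}$ so that the intermediate forms line up, and noting that the whole statement is local because $R_d$ need only be a local diffeomorphism.
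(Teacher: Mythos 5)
Your proposal is correct and follows exactly the paper's argument: the paper proves this proposition in one line by observing that $R_d^{T^*}$ is a composition of the three symplectomorphisms $\alpha_Q$, $\widehat{R_d}$ (via Proposition~\ref{Prop_CotLift}), and $\Phi^{-1}$, and that compositions of symplectomorphisms are symplectomorphisms~\cite{LiMarle}. Your write-up merely makes explicit the chaining of the intermediate symplectic forms and the reliance on Proposition~\ref{Prop:inv_Rd} for $\widehat{R_d}$ to be defined, which the paper leaves implicit.
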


As a consequence, 
\[
\left(R_d^{T^*}\right)^*(\Omega_{12})=d_T\omega_Q\, .
\]
The above result is essential to obtain symplectic methods in the following sections. 

When constructing numerical integrators in Section~\ref{Sec:GeomInt} for Hamiltonian systems, the inverse map of $R_d^{T^*}: TT^*Q\rightarrow T^*Q\times T^*Q$ is useful. Using Proposition \ref{prop_liftT^*Rd} we specifically write the inverse map 
\begin{equation*}
\left(R_d^{T^*}\right)^{-1}=\alpha^{-1}_Q\circ \widehat{R_d}^{-1} \circ \Phi: T^*Q\times T^*Q\rightarrow TT^*Q \, .
\end{equation*}

In local coordinates $(q_0,p_0;q_1,p_1)$ for $T^*Q\times T^*Q$ and using~\eqref{eq:hatFinverse}, it is quite simple to compute the inverse map 
	\begin{equation}\label{eq:localinverseRd*}
	\left(R_d^{T^*}\right)^{-1}(q_0,p_0; q_1,p_1)= \alpha_Q^{-1}\left(R_d^{-1} (q_0,q_1), (-p_0, p_1)\, { D}_{R_d^{-1} (q_0,q_1)}R_d\right) \, .
	\end{equation}
Remember that $\alpha_Q^{-1}(q,v, p_q, p_v)=(q, p_v, v, p_q)$. 

\subsection{Duality between the cotangent and the tangent lift of discretization maps}\label{Sec:DualLift}

After introducing both the tangent and cotangent lift of discretization maps, we show here the existing duality between the two maps. 
	
	For a discretization map on $Q$, we consider the tangent lift $R_d^{T}: TTQ\rightarrow TQ\times TQ$ defined by 
	$R_d^{T}=TR_d\circ \kappa_Q$ and the corresponding cotangent lift 
	$R_d^{T^*}=\Phi^{-1}\circ \widehat{R_d}\circ \alpha_Q\colon TT^*Q\rightarrow T^*Q\times T^*Q$. 
As mentioned earlier, $TT^*Q$ is a symplectic manifold with the 2-form ${\rm d}_T \omega_Q$ that induces a natural pairing as follows. Let $v\in TT^*Q$ and let $w\in TTQ$ such that $\tau_{TQ}(w)=T\pi_Q(v)$, the pairing $\langle\cdot, \cdot \rangle^T$ induced by the symplectic structure of $TT^*Q$ is given by 
\[
\langle v, \kappa_Q(w)\rangle^T= \frac{d}{dt}\langle \sigma_v(t), \gamma_{\tilde{w}}(t)\rangle (0)=\langle \alpha_Q(v), w\rangle\, ,
\]
where $\alpha_Q: TT^*Q\rightarrow T^*TQ$, $\sigma_v: I\rightarrow T^*Q$ and $\gamma_{\tilde{w}}: I\rightarrow TQ$ satisfy $\dot{\sigma}_v(0)=v$ and $\dot{\gamma}_{\tilde{w}}(0)=\tilde{w}$ with $\tilde{w}=\kappa_Q(w)$ and $\pi_Q\circ \sigma_v=\tau_Q \circ \gamma_{\tilde{w}}$.

\begin{proposition}\label{Prop:RTandRT*} The tangent lift and the cotangent lift of a discretization map on $Q$ satisfy the following equality:
\[
\left\langle \Phi(\alpha_{q_0}, \alpha_{q_1}),  R_d^{T} (w)\right\rangle
=\left\langle 	\left(R_d^{T^*}\right)^{-1}(\alpha_{q_0}, \alpha_{q_1}), w\right\rangle^T \, ,
\]	
	where $w\in TTQ$, $(R_d)^{-1}(q_0, q_1)=T\tau_Q(w)$ and the pairing $\langle\cdot, \cdot \rangle^T$ is induced by the symplectic structure of $TT^*Q$.
\end{proposition}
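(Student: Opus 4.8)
The plan is to prove the identity by unwinding the definitions of both lifts and of the induced pairing $\langle\cdot,\cdot\rangle^T$, reducing the whole statement to the elementary adjointness relation between the tangent map $TR_d$ and its dual $T^*R_d$. No genuine coordinate computation should be needed: everything follows by composing the canonical isomorphisms $\kappa_Q$, $\alpha_Q$ and $\Phi$ in the correct order.

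First I would rewrite the right-hand pairing using the characterization of $\langle\cdot,\cdot\rangle^T$ stated just before the proposition, namely $\langle v,\kappa_Q(w')\rangle^T=\langle\alpha_Q(v),w'\rangle$. Taking $v=(R_d^{T^*})^{-1}(\alpha_{q_0},\alpha_{q_1})$ and $w'=\kappa_Q(w)$, and using that $\kappa_Q$ is an involution, this gives
\[
\left\langle (R_d^{T^*})^{-1}(\alpha_{q_0},\alpha_{q_1}),\, w\right\rangle^T=\left\langle \alpha_Q\!\left((R_d^{T^*})^{-1}(\alpha_{q_0},\alpha_{q_1})\right),\, \kappa_Q(w)\right\rangle,
\]
where the right-hand side is now the canonical pairing between $T^*TQ$ and $TTQ$. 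Here I would check that the base-point condition $\tau_{TQ}(w')=T\pi_Q(v)$ required for $\langle\cdot,\cdot\rangle^T$ to be defined is exactly the hypothesis $R_d^{-1}(q_0,q_1)=T\tau_Q(w)$: indeed $\tau_{TQ}(\kappa_Q(w))=T\tau_Q(w)$, while a short inspection of $\alpha_Q^{-1}$ together with~\eqref{eq:localinverseRd*} shows $T\pi_Q(v)=R_d^{-1}(q_0,q_1)$, so the two conditions coincide and every pairing is well defined.

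Next I would substitute the definition $(R_d^{T^*})^{-1}=\alpha_Q^{-1}\circ\widehat{R_d}^{-1}\circ\Phi$, so that $\alpha_Q\circ (R_d^{T^*})^{-1}=\widehat{R_d}^{-1}\circ\Phi$, obtaining $\langle\alpha_Q(v),\kappa_Q(w)\rangle=\langle\widehat{R_d}^{-1}(\Phi(\alpha_{q_0},\alpha_{q_1})),\kappa_Q(w)\rangle$. Then I would use that, by Definition~\ref{def:colift} together with $(T^*F^{-1})\circ(T^*F)=\mathrm{Id}$, the inverse cotangent lift is the dual map $\widehat{R_d}^{-1}=T^*R_d$ (consistent with~\eqref{eq:hatFinverse}). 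Applying the defining property of the dual map, $\langle T^*R_d(\beta),u\rangle=\langle\beta,TR_d(u)\rangle$, with $\beta=\Phi(\alpha_{q_0},\alpha_{q_1})$ and $u=\kappa_Q(w)$, transfers $T^*R_d$ to the tangent side:
\[
\left\langle\widehat{R_d}^{-1}(\Phi(\alpha_{q_0},\alpha_{q_1})),\,\kappa_Q(w)\right\rangle=\left\langle\Phi(\alpha_{q_0},\alpha_{q_1}),\, TR_d(\kappa_Q(w))\right\rangle.
\]
Finally, $TR_d(\kappa_Q(w))=(TR_d\circ\kappa_Q)(w)=R_d^{T}(w)$ by the definition of the tangent lift in Proposition~\ref{prop-liftTR}, which is precisely the left-hand side of the claimed equality.

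Since the argument is a chain of substitutions, the main obstacle is purely bookkeeping: tracking where the involution $\kappa_Q$ and the isomorphisms $\alpha_Q$, $\Phi$ enter, verifying the base points match so that each pairing makes sense, and correctly identifying $\widehat{R_d}^{-1}$ with $T^*R_d$ so that the adjointness of the tangent and dual maps can be invoked. As a sanity check I would, if desired, also verify the identity directly in the coordinates $(q_0,p_0;q_1,p_1)$ using~\eqref{eq:localinverseRd*}, the local forms of $\Phi$ and $\alpha_Q^{-1}$, and the coordinate expression of $R_d^{T}$ from Proposition~\ref{prop-liftTR}; both sides should collapse to the same contraction of the covector $(-p_0,p_1)$ against the Jacobian of $R_d$, confirming the intrinsic computation.
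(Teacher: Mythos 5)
Your proof is correct and is essentially the paper's own argument run in reverse: the paper starts from $\left\langle \Phi(\alpha_{q_0},\alpha_{q_1}),\, R_d^{T}(w)\right\rangle$ and applies the very same three ingredients you use (the adjointness identity $\langle \widehat{R_d}^{-1}(\beta),u\rangle=\langle\beta,TR_d(u)\rangle$, the characterization of $\langle\cdot,\cdot\rangle^{T}$ via $\alpha_Q$ and $\kappa_Q$, and the factorization $(R_d^{T^*})^{-1}=\alpha_Q^{-1}\circ\widehat{R_d}^{-1}\circ\Phi$) in the opposite order to land on the right-hand side. Your explicit check that the base-point condition $\tau_{TQ}(\kappa_Q(w))=T\pi_Q(v)$ reduces to the hypothesis $R_d^{-1}(q_0,q_1)=T\tau_Q(w)$ is a small piece of bookkeeping the paper leaves implicit, but it does not change the route.
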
	
\begin{proof}
	Observe that 
	\begin{eqnarray*}
	\left\langle   \Phi(\alpha_{q_0}, \alpha_{q_1}), R_d^{T} (w)\right\rangle
	&=&\left\langle  (\widehat{R_d}^{-1}\circ \Phi)(\alpha_{q_0}, \alpha_{q_1}), \kappa_Q(w)\right\rangle \\	
	&=&\left\langle  (\alpha_Q)^{-1}\left( \left(\widehat{R_d}^{-1}\circ \Phi\right)(\alpha_{q_0}, \alpha_{q_1})\right), w \right\rangle^T\\
	&=&\left\langle 	\left(R_d^{T^*}\right)^{-1}(\alpha_{q_0}, \alpha_{q_1}), w\right\rangle^T \, .
	\end{eqnarray*}
\end{proof}	

Using Propositions \ref{lift-adjoint} and \ref{Prop:RTandRT*} it is easy to prove the following relation between the cotagent lift of the adjoint discretization and the adjoint of the cotangent lift discretization map. 
\begin{proposition} \label{prop_cotangent_sym} Let $R_d$ be a discretization map and $R_d^*$ be the adjoint discretization map. Then the cotangent lift of a symmetric discretization map is also symmetric, that is, 
$
(R_d^*)^{T^*}= (R_d^{T^*})^*$. 
\end{proposition}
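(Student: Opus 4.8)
The two objects to be compared, $(R_d^*)^{T^*}$ and $(R_d^{T^*})^*$, are both discretization maps on $T^*Q$, hence local diffeomorphisms from a neighbourhood of the zero section by Proposition~\ref{prop_liftT^*Rd}. The plan is therefore to show that their \emph{inverses} coincide, and the whole argument hinges on the duality of Proposition~\ref{Prop:RTandRT*}: since the pairing $\langle\cdot,\cdot\rangle^T$ is nondegenerate, that identity determines $(R_d^{T^*})^{-1}$ uniquely from $R_d^T$. The strategy is to run this duality twice, once for $R_d^*$ and once for $R_d$, and to let Proposition~\ref{lift-adjoint} bridge the two by rewriting the tangent lift of the adjoint as the adjoint of the tangent lift.

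Concretely, I would first write the duality of Proposition~\ref{Prop:RTandRT*} for the discretization map $R_d^*$:
\[
\big\langle \Phi(\alpha_{q_0},\alpha_{q_1}),\,(R_d^*)^{T}(w)\big\rangle=\big\langle \big((R_d^*)^{T^*}\big)^{-1}(\alpha_{q_0},\alpha_{q_1}),\,w\big\rangle^{T},
\]
valid for every admissible $w\in TTQ$. By Proposition~\ref{lift-adjoint} the left factor is $(R_d^*)^{T}=(R_d^{T})^{*}$, and by the definition of the adjoint discretization map (Proposition~\ref{Prop:inversion}, now applied on the manifold $TQ$) this is $I_{TQ}\circ R_d^{T}$ precomposed with the fibrewise velocity reversal $\nu$ of $TTQ$. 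I would then move the factor swap $I_{TQ}$ onto the covector data: from $\Phi(q_0,p_0;q_1,p_1)=(q_0,q_1,-p_0,p_1)$ and the canonical pairing one checks $\langle\Phi(\alpha_{q_0},\alpha_{q_1}),I_{TQ}(\cdot)\rangle=\langle\Phi(-\alpha_{q_1},-\alpha_{q_0}),\cdot\rangle$, a swap accompanied by a sign.

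At that point the left-hand side has exactly the shape to which Proposition~\ref{Prop:RTandRT*} applies for the original map $R_d$, so it equals $\langle (R_d^{T^*})^{-1}(-\alpha_{q_1},-\alpha_{q_0}),\,\nu(w)\rangle^{T}$. The last move is to transport $\nu$ across $\langle\cdot,\cdot\rangle^T$ using $\langle V,W\rangle^T=\langle\alpha_Q(V),\kappa_Q(W)\rangle$ and the local forms of $\alpha_Q$ and ${\rm d}_T\omega_Q$; once the induced sign changes on $TT^*Q$ are accounted for, the identity collapses to
\[
\big((R_d^*)^{T^*}\big)^{-1}(\alpha_{q_0},\alpha_{q_1})=n_{T^*Q}\Big((R_d^{T^*})^{-1}(\alpha_{q_1},\alpha_{q_0})\Big),
\]
with $n_{T^*Q}$ the fibrewise velocity reversal on $TT^*Q$. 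This is precisely the elementary adjoint--inverse formula $((R_d^{T^*})^*)^{-1}=n_{T^*Q}\circ(R_d^{T^*})^{-1}\circ I_{T^*Q}$ obtained from Proposition~\ref{Prop:inversion} on $T^*Q$, so nondegeneracy of $\langle\cdot,\cdot\rangle^T$ yields $((R_d^*)^{T^*})^{-1}=((R_d^{T^*})^*)^{-1}$ and hence the claim.

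The main obstacle is the bookkeeping in this last step: one must check that the sign produced by transporting $I_{TQ}$ through $\Phi$ combines correctly with the sign produced by transporting $\nu$ through $\langle\cdot,\cdot\rangle^T$, so that the two extra signs cancel and leave the pure factor swap $I_{T^*Q}$ demanded by the adjoint formula, while keeping the base-point compatibility constraints consistent throughout. Should the invariant sign-tracking prove awkward, a safe alternative is a direct coordinate verification: feed the adjoint data $(R_d^*)^{-1}(q_0,q_1)=\nu\big(R_d^{-1}(q_1,q_0)\big)$ and the Jacobian of $R_d^*$ into the explicit inverse~\eqref{eq:localinverseRd*}, compute $n_{T^*Q}\circ(R_d^{T^*})^{-1}\circ I_{T^*Q}$ from the same formula, and match the four coordinate blocks, where the $\partial_v R^a$-blocks give equal momenta and the $\partial_q R^a$-blocks differ by exactly the velocity reversal.
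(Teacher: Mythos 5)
Your proposal is correct and follows exactly the route the paper prescribes: the paper gives no written proof of Proposition~\ref{prop_cotangent_sym} beyond the remark that it follows from Propositions~\ref{lift-adjoint} and~\ref{Prop:RTandRT*}, and your argument is a faithful, detailed elaboration of precisely that combination. The sign bookkeeping you flag does work out as you predict: the negation of the covectors produced by pushing $I_{TQ}$ through $\Phi$ and the fibrewise reversal $\nu$ on $TTQ$ cancel against each other in the pairing $\langle\cdot,\cdot\rangle^T$, leaving exactly the adjoint--inverse map $n_{T^*Q}\circ(R_d^{T^*})^{-1}\circ I_{T^*Q}$.
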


\subsection{Examples}\label{Sec:ExliftRd}

We resume Examples~\ref{Ex:basic},~\ref{Ex:Sphere},~\ref{Ex:cayley} to construct the lifts of discretization maps described in the previous sections. In other words, we define discretization maps on $TQ$ and $T^*Q$ starting from a discretization map on $Q$.

\begin{example} \label{Ex:LiftBasic}
We focus now on the mid-point rule described in Example~\ref{Ex:basic} to define the tangent and cotangent lift of that symmetric discretization  map.
Assume that $Q$ is a vector space and let $R_d\colon TQ \rightarrow Q\times Q$ be the discretization map induced by the mid-point rule as follows $R_d(q, v)=\left(q-\frac{1}{2}v, q+\frac{1}{2}v\right)$. If we compute the inverse map $R_d^{-1}(q_0,q_1)=\left(\dfrac{q_0+q_1}{2},q_1-q_0 \right)$, we construct the sequence of points that will be used either for optimization or numerical integration as the discrete flow 
\begin{eqnarray*} \phi_d \colon Q\times Q &\rightarrow & Q\\
(q_0,q_1)& \mapsto & \left(\tau_Q\circ R_d^{-1} \right)(q_0,q_1)=\tau_Q \left(\dfrac{q_0+q_1}{2},q_1-q_0\right)=\dfrac{q_0+q_1}{2}\,. 
\end{eqnarray*}
Thus, the mid-point rule is recovered. 

To define the tangent lift of the discretization map  $R_d^{T}\colon TTQ \rightarrow TQ\times TQ$ on $TQ$ we first need to compute the tangent map whose matrix is 
$${\rm D} R_d= \begin{pmatrix} {\rm Id}  & -\dfrac{1}{2}\,  { \rm Id}\\ { \rm Id}  & \dfrac{1}{2}\,  {\rm Id} \end{pmatrix}\, .$$ 

The tangent lift of $R_d$ is given by:
\begin{align*}R_d^{T}(q,\dot{q},v,\dot{v})&=\left(TR_d \circ \kappa_Q\right) (q,\dot{q},v,\dot{v})=TR_d(q,v;\dot{q},\dot{v})\\&=\left(q-\dfrac{1}{2}\, v,  q+\dfrac{1}{2}\, v; \;\dot{q}-\dfrac{1}{2}\, \dot{v},  \dot{q}+\dfrac{1}{2}\, \dot{v},\right) \quad 
\\&\equiv\left(q-\dfrac{1}{2}\, v, \dot{q}-\dfrac{1}{2}\, \dot{v}; \; q+\dfrac{1}{2}\, v,  \dot{q}+\dfrac{1}{2}\, \dot{v},\right) \,,
\end{align*}
where we naturally identify elements of $T(Q\times Q)$ with elements of $TQ\times TQ$. 

We can also compute the inverse map:
\begin{equation*}
\left( R_d^{T}\right)^{-1}(q_0,v_0;q_1,v_1)=\left(\dfrac{q_0+q_1}{2},\dfrac{v_0+v_1}{2}; q_1-q_0,v_1-v_0 \right)\, .
\end{equation*} 

To compute the cotangent lift of $R_d$, that it, $R_d^{T^*}\colon TT^*Q\rightarrow T^*Q\times T^*Q$, we first need the tangent map of the inverse map $R^{-1}_d(q_0,q_1)=\left(\dfrac{q_0+q_1}{2},q_1-q_0\right)$:
$$D R^{-1}_d=\begin{pmatrix} \dfrac{1}{2}\, { \rm Id} & \dfrac{1}{2}\, { \rm Id} \\ - {\rm Id} & {\rm Id} \end{pmatrix}\, .$$
Thus the cotangent lift of $R_d^{-1}$ is given by:
\begin{align*}
\widehat{R_d}(q,v,p_q,p_v)&=\left(R_d(q,v), (p_q,\, p_v) (D_{(q,v)}R_d^{-1})\right)\\&= \left( q-\dfrac{1}{2}\, v, q+\dfrac{1}{2}\, v; \; \dfrac{p_q}{2}-p_v, \dfrac{p_q}{2}+p_v\right)\, .
\end{align*}
Finally, the cotangent lift of $R_d$ is the following discretization map on $T^*Q$:
\begin{align}\label{eq:RT*midpoint} R_d^{T^*}(q,p,\dot{q},\dot{p})&=\left(\Phi^{-1}\circ \widehat{R_d} \circ \alpha_Q \right)(q,p,\dot{q},\dot{p})=\left(\Phi^{-1}\circ \widehat{R_d}\right) (q,\dot{q},\dot{p},p)\nonumber \\ &= \Phi^{-1} \left( q-\dfrac{1}{2}\,\dot{q}, q+\dfrac{1}{2}\, \dot{q}; \; \dfrac{\dot{p}}{2}-p, \dfrac{\dot{p}}{2}+p\right) \nonumber\\&=\left( q-\dfrac{1}{2}\,\dot{q}, p-\dfrac{\dot{p}}{2}; \; q+\dfrac{1}{2}\, \dot{q}, p+\dfrac{\dot{p}}{2}\right) \, . \end{align}

The inverse map $\left(R_d^{T^*}\right)^{-1}\colon T^*Q\times T^*Q \rightarrow TT^*Q$ is given by:
\begin{equation}\label{eq:RT*midpointInv}\left(R_d^{TT^*Q}\right)^{-1}(q_0,p_0; q_1,p_1)=\left( \dfrac{q_0+q_1}{2},\frac{p_0+p_1}{2},q_1-q_0, p_1-p_0\right)\, . \demo \end{equation}
\end{example}

 \begin{example}\label{Ex:liftSphere} Let us lift the discretization map in Example~\ref{Ex:Sphere}. To simplify the computations we consider the discretization map that fixes the first point (compared with~\eqref{eq:RdS2mid}) as follows
 	\[
 	R_d (x, \xi)=\left(x, \cos \left( \|\xi\|\right) x+\sin \left(\|\xi\|\right) \frac{\xi}{\|\xi\|}\right)\, .
 	\]
 	Remember that $x\,\cdot  x^T=1$ and $x\, \cdot \xi^T=0$ because the manifold $Q$ is the sphere $S^2$.
 	
 The tangent map of $R_d$ is given by the matrix
 \begin{equation*}
 D_{(x,\xi)} R_d=\begin{pmatrix}  {\rm Id} & \rvline & 0 \\ \hline  \cos\left(\|\xi\|\right) \, { \rm Id}& \rvline & N(x,\xi)\end{pmatrix}\, ,
 \end{equation*}
 where 
  \begin{equation*}
 N_{ij}=-\sin \left(\|\xi\|\right) \, \dfrac{\xi_j x_i}{\|\xi\|}+ \cos \left(\|\xi\| \right) \, \dfrac{\xi_j \xi_i}{\|\xi\|}+\sin \left(\|\xi\|\right) \, \cdot  \begin{cases}  \dfrac{\|\xi\|^2-\xi_i\xi_i}{\|\xi\|^3}, \quad \text{for} \; i=j\, ,\\ \\
 \dfrac{-\xi_i\xi_j}{\|\xi\|^3}, \quad \text{for} \; i\neq j\, ,\\
 \end{cases} 
 \end{equation*} are the entries of the invertible matrix $N(x,\xi)$.
 Thus, the tangent lift of $R_d$ is the following discretization map on $TQ$:
 \begin{align*}
 R_d^{T}(x,\dot{x},\xi,\dot{\xi})&= \left(TR_d \circ \kappa_Q\right) (x,\dot{x},\xi,\dot{\xi})=TR_d(x,\xi; \dot{x},\dot{\xi})\\
 &=\left(x, \cos \left( \|\xi\|\right) x+\sin \left(\|\xi\|\right) \frac{\xi}{\|\xi\|}, \dot{x}, \cos(\|\xi\|) \, \dot{x}+ N(x,\xi) \, \dot{\xi}\right)  \\
 &\equiv  \left(x, \dot{x} ; \; \cos \left( \|\xi\|\right) x+\sin \left(\|\xi\|\right) \frac{\xi}{\|\xi\|},  \cos(\|\xi\|) \, \dot{x}+ N(x,\xi) \, \dot{\xi}\right) \, .
 \end{align*}
 To compute the cotangent lift of $R_d$, $R_d^{T^*}\colon TT^*Q\rightarrow T^*Q\times T^*Q$, we first need  the tangent map of the inverse map $R^{-1}_d$ or, equivalently, the inverse of the tangent map:
 $$D R^{-1}_d=\left(D R_d\right)^{-1}=\begin{pmatrix} {\rm Id} & 0 \\ -\cos(\|\xi\|) \, N^{-1} & N^{-1}  \end{pmatrix}\,  .$$
 Thus the cotangent lift of $R_d^{-1}$ is given by:
 \begin{align*}
 \widehat{R_d}(x,\xi ,p_x,p_{\xi})&=\left(R_d(x,\xi), (p_x,p_{\xi}) \left(D_{(x,\xi)} R_d\right)^{-1}\right)\, .
 \end{align*}
 Finally, the discretization map on $T^*Q$ is obtained as follows:
 \begin{align*} R_d^{T^*}(x,p,\dot{x},\dot{p})&=\left(\Phi^{-1}\circ \widehat{R_d} \circ \alpha_Q \right)(x,p,\dot{x},\dot{p})=\left(\Phi^{-1}\circ \widehat{R_d}\right) (x,\dot{x},\dot{p},p)\\ &= \Phi^{-1} \left(x, \cos \left( \|\dot{x}\|\right) x+\sin \left(\|\dot{x}\|\right) \frac{\dot{x}}{\|\dot{x}\|}\, ; \, \dot{p}-\cos \left( \|\dot{x}\|\right) \, p N^{-1}, p\, N^{-1}\right)\, \\
 &\equiv \left(x,-\dot{p}+\cos \left( \|\dot{x}\|\right) \, p N^{-1}\, ; \,  \cos \left( \|\dot{x}\|\right) x+\sin \left(\|\dot{x}\|\right) \frac{\dot{x}}{\|\dot{x}\|}\, , p\, N^{-1} \right)\,.\demo
  \end{align*}

 \end{example}

 \begin{example}\label{Ex:liftCayley} 
	Let us lift the discretization map in Example~\ref{Ex:cayley}. As in the previous example,  to simplify the computations we consider the discretization map $R_{d,{\rm cay}}\colon TSO(3)\rightarrow SO(3)\times SO(3)$ that fixes the first point (compared with~\eqref{eq:Rcaymid}) as follows:
	\begin{equation*}
	R_{d,{\rm cay}}(A,X)= \left(A, A\, {\rm cay} (A^T\,X)\right)=\left(A,A \left({\rm Id}_3-A^TX/2\right)^{-1}\left({\rm Id}_3+A^TX/2\right)\right)\,.
	\end{equation*}
	
	The tangent map of $R_{d,{\rm cay}}$ is given by the matrix
	\begin{align*}
	D_{(A,X)} 	R_{d,{\rm cay}}&=\begin{pmatrix} {\rm Id} & \rvline & 0 \\ \hline  {\rm cay}(A^TX)+A \, \dfrac{\rm d}{{\rm d}A} {\rm cay} (A^TX) \, X  & \rvline & A \, \dfrac{\rm d}{{\rm d }X} {\rm cay} (A^TX) \, A^T\end{pmatrix}\\ &= \begin{pmatrix}  {\rm Id} & \rvline & 0 \\ \hline  M  & \rvline & N \end{pmatrix}\, . 
	\end{align*}
	
	Thus, 
	\begin{align*}
	R_{d,{\rm cay}}^{T}(A,\dot{A},X,\dot{X})&= \left(TR_{d,{\rm cay}} \circ \kappa_Q\right) (A,\dot{A},X,\dot{X})=TR_d(A,X,\dot{A},\dot{X})\\
	&=\left(A, A\, {\rm cay} (A^TX), \dot{A}, M \dot{A}+ N \,\dot{X}\right)\\
	&\equiv \left(A, \dot{A}; A \,{\rm cay} (A^TX),  M \dot{A}+ N \,\dot{X}\right)\, ,
	\end{align*}
	where $N$ is an invertible matrix.
	To compute the discretization map $R_{d,{\rm cay}}^{T^*}\colon TT^*Q\rightarrow T^*Q\times T^*Q$ on $T^*Q$ as the cotangent lift of $R_d$, we first need  the tangent map of the inverse map $R^{-1}_{d,{\rm cay}}$ or, equivalently, the inverse of the tangent map:
	$$D R^{-1}_{d,{\rm cay}}=\left(D R_{d,{\rm cay}}\right)^{-1}=\begin{pmatrix} {\rm Id} & 0 \\ -N^{-1}M & N^{-1}  \end{pmatrix}\,  .$$

	Thus the cotangent lift of $R^{-1}_{d,{\rm cay}}$ is given by:
	\begin{align*}
	\widehat{R_{d,{\rm cay}}}(A,X,p_A,p_X) &=\left(R_{d,{\rm cay}}(A,X), (p_A,p_X) \left(D R_{d,{\rm cay}}\right)^{-1}(A,X)\right)\, .
	\end{align*}
	Finally, the discretization map on $T^*Q$ is obtained as follows:
	\begin{align*} R_{d,{\rm cay}}^{T^*}(A,p,\dot{A},\dot{p})&=\left(\Phi^{-1}\circ \widehat{R_{d,{\rm cay}}} \circ \alpha_Q \right)(A,p,\dot{A},\dot{p})=\left(\Phi^{-1}\circ \widehat{R_{d,{\rm cay}}}\right) (A,\dot{A},\dot{p},p)\\ &= \Phi^{-1} \left(A, A\,  {\rm cay} (A^T\dot{A}), \dot{p}-pN^{-1}M,pN^{-1}\right)\\ &=  \left(A, pN^{-1}M-\dot{p};A \, {\rm cay} (A^T\dot{A}),pN^{-1}\right) \,. \demo
	\end{align*}
\end{example}

\section{discretization maps associated to SODEs}\label{Sec:AllSODE}

The tangent lift of discretization maps defined in Section~\ref{Sec:liftRd} appears naturally when geometrically designing discretizations of second order differential equations (SODEs). Remember that a second order differential equation is  a vector field $\Gamma$ such that  
$\tau_{TQ}(\Gamma)=T\tau_{Q}(\Gamma)$.
This implies that the vector field $\Gamma$ on $TQ$ is a section of the second order tangent bundle $T^{(2)}Q$, as described in~\cite{BookLeon}. 
Locally, if we take coordinates $(q^i)$ on $Q$ and induced coordinates $(q^i, \dot{q}^i)$ on $TQ$, then 
\[
\Gamma=\dot{q}^i\frac{\partial}{\partial q^i}+\Gamma^i(q, \dot{q})\frac{\partial}{\partial \dot{q}^i}\; .
\]
To find the integral curves of $\Gamma$ is equivalent to solve the following system of second order differential equations: 
\begin{equation*}
\frac{d^2 q^i}{dt^2}=\Gamma^i\left(q, \frac{dq}{dt}\right)\; .
\end{equation*}

Now, we want to discretize these equations using the notion of discretization map defined on $TQ$ as in Definition~\ref{def:DiscreteMap2}. Here we have two options: we could directly define a discretization map on $TQ$ denoted by $R_d^{TTQ}\colon TTQ \rightarrow TQ\times TQ$ or we could tangently lift a discretization map on $Q$ to obtain $R_d^T\colon TTQ \rightarrow TQ\times TQ$  as defined in Proposition~\ref{prop-liftTR}. 

Let us consider in general that we have a discretization map on $TQ$,
\[
R_d^{TTQ}: TTQ\rightarrow TQ\times TQ\, ,
\]
given by 
$
R_d^{TTQ}(q, v, \dot{q}, \dot{v})= \left(\left(R^{TTQ}\right)^1 (q, v, \dot{q}, \dot{v}), \left(R^{TTQ}\right)^2 (q, v, \dot{q}, \dot{v})\right)
$. Note that
 $\left(R^{TTQ}\right)^i (q, v, \dot{q}, \dot{v})\in TQ$ for $i=1,2$.

A first option for discretizing a SODE $\Gamma$ consists of the following implicit discrete equation:
\begin{equation}\label{eq:first-method}
\left(\left(R^{TTQ}\right)^2\circ h\Gamma\right)(q_k, v_k)=\left(\left(R^{TTQ}\right)^1\circ h\Gamma\right)(q_{k+1}, v_{k+1})\, ,
\end{equation}
where $h$ is a positive small real number that determines the step size.
The numerical method starts from the initial data $v_k\in T_{q_k} Q$, then the Equation~\eqref{eq:first-method} is solved implicitly to obtain $v_{k+1}\in T_{q_{k+1}} Q$. 
Section~\ref{Sec:Newmark} shows that a discretization map on $TQ$, not coming from a tangent lift, recovers Newmark method using the discretization method in Equation~\eqref{eq:first-method}.
Geometrically, these methods given in Equation~\eqref{eq:first-method} are based on the structure of  groupoid of an implicit difference equation, in this case $TQ\times TQ\rightrightarrows TQ$ (see \cite{IgMaMaPa} for more details). 

A second option for discretizing a SODE consists of the following numerical scheme:
\begin{equation}\label{eq:2ndmethod-SODE}
h\, \Gamma \left(\left(\tau_{TQ}\circ \left(R^{TTQ}_d\right)^{-1}\right)(q_k, v_k; q_{k+1}, v_{k+1})\right)=\left(R^{TTQ}_d\right)^{-1}(q_k, v_k; q_{k+1}, v_{k+1})\,. 
\end{equation}
As in Equation~\eqref{eq:first-method}, the numerical method is usually implicit. We will focus on this discretization process in Sections~\ref{Sec:SODE} and ~\ref{Sec:GeomInt} when constructing geometric integrators for mechanical systems.

Let us do a simple example to show that the numerical schemes in Equations~\eqref{eq:first-method} and~\eqref{eq:2ndmethod-SODE} are usually different.

\begin{example}
Consider the discretization map on $TQ$ obtained from a tangent lift in Example \ref{Ex:LiftBasic} and the inverse map: 
\begin{align*}&
R_d^T(q,\dot{q},v,\dot{v}) =\left(q-\dfrac{1}{2}\, v, \dot{q}-\dfrac{1}{2}\, \dot{v}\,  ; 
q+\dfrac{1}{2}\, v\, , \dot{q}+\dfrac{1}{2}\, \dot{v}\right) \,,\\ &
\left(R_d^T\right)^{-1}(q_0,v_0;q_1,v_1)=\left(\dfrac{q_0+q_1}{2},\dfrac{v_0+v_1}{2}; q_1-q_0,v_1-v_0 \right)\, .
\end{align*}
The method in Equation~\eqref{eq:first-method} becomes: 
\begin{align*}
\frac{q_{k+1}-q_{k}}{h}&= \frac{v_k+v_{k+1}}{2}\, ,\\
\frac{v_{k+1}-v_{k}}{h}&= \frac{1}{2}\left(\Gamma (q_k, v_k) +\Gamma (q_{k+1}, v_{k+1})\right)\, .
\end{align*}
However, for the same discretization map on $TQ$ the method in Equation~\eqref{eq:2ndmethod-SODE} is given by the following  equations: 
\begin{align*}
\frac{q_{k+1}-q_{k}}{h}&= \frac{v_k+v_{k+1}}{2}\, ,\\
\frac{v_{k+1}-v_{k}}{h}&= \Gamma\left( \frac{q_k+q_{k+1}}{2}, 
 \frac{v_k+v_{k+1}}{2}\right)\, . \demo
\end{align*}
\end{example}

	\subsection{Newmark method from a discretization map} \label{Sec:Newmark}
An example of discretization using Equation~\eqref{eq:first-method} is the Newmark method~\cite{Newmark}, a classical time-stepping method very  common in structural mechanical codes. For simplicity, we consider a typical mechanical Lagrangian $L: T{\mathbb R}^n\longrightarrow {\mathbb R}$:
\[
L(q, \dot{q})=\frac{1}{2}\dot{q} M\dot{q}^T-V(q)\, ,
\]
where $(q, \dot{q})\in T{\mathbb R}^n$, $M$ is a positive definite constant matrix and $V$ is a potential function.
The corresponding 
Euler-Lagrange equations are: 
\begin{equation}\label{eq:NewmarkL}
\ddot{q}=-M^{-1}\nabla V(q)\,,
\end{equation}
where $\nabla$ denotes the gradient of the potential function.

The Newmark methods are widely used in simulations of such mechanical systems, including even external forces \cite{KaMaOrWe}. To construct the method two real parameters $\alpha$ and $\beta$ are selected so that the algorithm determines $(q_{k+1}, \dot{q}_{k+1})$ in terms of $(q_{k}, \dot{q}_{k})$ as follows: 
\begin{align}\label{eq:NewmarkMethod}
	q_{k+1}&=q_k+h\dot{q}_k+\frac{h^2}{2}\left( (1-2\beta) a_{k}+2\beta a_{k+1}\right)\\
	\dot{q}_{k+1}&=\dot{q}_k+h\left( (1-\gamma) a_k+\gamma a_{k+1}\right)\, , \nonumber
\end{align}
where $a_k=-M^{-1}\nabla V(q_k)$ and  $a_{k+1}=-M^{-1}\nabla V(q_{k+1})$.

We show here that the family of Newmark methods can be obtained from a discretization map on the tangent bundle $TQ$. Let us define $\left(R^{TTQ}_d\right): TT{\mathbb R}^n\equiv {\mathbb R}^{4n}\rightarrow T{\mathbb R}^n\times T {\mathbb R}^n\equiv {\mathbb R}^{2n}\times {\mathbb R}^{2n}$ by
\begin{align*}
\left(R^{TTQ}\right)^1(q, v, \dot{q}, \dot{v})&=\left(q-\frac{1}{2}\dot{q}+\frac{h}{2}(\gamma-2\beta)\dot{v}, v-\gamma \dot{v}\right)\, , \\
\left(R^{TTQ}\right)^2(q, v, \dot{q}, \dot{v}) &=\left(q+\frac{1}{2}\dot{q}+\frac{h}{2}(\gamma-2\beta)\dot{v}, v+(1-\gamma) \dot{v}\right)\, .
\end{align*}
The Jacobian matrix of $R_d^{TTQ}$ is
\[
\left(
\begin{array}{rrrr}
{\rm Id} &0&-\frac{1}{2}\,{\rm Id} &\frac{h}{2}(\gamma-2\beta) \,{\rm Id}\\
0& {\rm Id}&0&-\gamma \,{\rm Id}\\
{\rm Id} &0&\frac{1}{2} \,{\rm Id}&\frac{h}{2}(\gamma-2\beta) \,{\rm Id}\\
0&{\rm Id}&0&(1-\gamma) \,{\rm Id}
\end{array}
\right)\, .
\]
It is straightforward that $R_d^{TTQ}$ satisfies both properties in Definition~\ref{def:DiscreteMap2}. Hence, $R_d^{TTQ}$ is an discretization map on $TQ$. 

The Euler-Lagrange equations~\eqref{eq:NewmarkL} can be rewritten as the submanifold $S$ of $T^{(2)}Q\subset TTQ$,
\[
S=\{(q, \dot{q}, a ) \; \mid\; a=-M^{-1}\nabla V(q)   \}\, ,
\]
with the natural inclusion $i: T^{(2)}Q\hookrightarrow TTQ$, $i(q, \dot{q}, a )=(q, \dot{q}, \dot{q}, a )$.

Hence,  the dynamics induced by the Newmark method is equivalent to the following algorithm:
\begin{enumerate}
	\item Take an initial  position and velocity $(q_k, \dot{q}_k)$.
	\item Evaluate $a_k=-M^{-1}\nabla V(q_k)$.
	\item Solve the system obtained from Equation~\eqref{eq:first-method}:
	\begin{equation}\label{aqr}
\left(R^{TTQ}\right)^2(q_k, \dot{q}_k; h\dot{q}_k, ha_k)=		\left(R^{TTQ}\right)^1(q_{k+1}, \dot{q}_{k+1}; h\dot{q}_{k+1}, h a_{k+1})\, ,
	\end{equation}
	where $	a_{k+1}=-M^{-1}\nabla V(q_{k+1})$.
\end{enumerate}
Observe that Equation~\eqref{aqr} is equal to 
\begin{eqnarray*}
	q_k+\frac{h}{2}\dot{q}_k+\frac{h^2}{2}(\gamma-2\beta)a_k&=&
	q_{k+1}-\frac{h}{2}\dot{q}_{k+1}-\frac{h^2}{2}(\gamma-2\beta)a_{k+1}\, ,\\
	\dot{q}_k+h(1-\gamma)a_k&=&\dot{q}_{k+1}-h\gamma a_{k+1}\, .
\end{eqnarray*}
After algebraic manipulations, the above equations are equivalent to the well-known Newmark method in Equation~\eqref{eq:NewmarkMethod}.

Note that if $\gamma=1/2$ and $\beta=1/4$, then $R_d^{TTQ}$ is precisely the tangent lift of the discretization map on $Q$ coming from the mid-point rule as described in Example~\ref{Ex:LiftBasic}.

\subsection{Discretization maps associated with discrete second order equations} 
\label{Sec:SODE}

In this section we briefly discuss the possibility to find a discrete version of  a second order differential equation (SODE) using a  second order discrete equation (SOdE).

According to \cite{MW_Acta}, a SOdE is given by a map
$\Gamma_d: Q\times Q\rightarrow Q\times Q\times Q\times Q$ such that 
\[
\Gamma_d(q_{k-1}, q_k)=(q_{k-1}, q_k, q_k, \tilde{\Gamma}_d(q_{k-1}, q_k))\,,
\]
in other words, $q_{k+1}=\tilde{\Gamma}_d(q_{k-1}, q_k)$. From two initial conditions $q_0$, $q_1$ this equation defines the discrete evolution as the sequence $\{q_0, q_1, q_2, \ldots\}$.

Given a discretization map on $Q$ and a second order vector field $\Gamma$, we wonder if, under any assumption, the tangent lift of the discretization map, $R_d^T$, could define a discrete second order equation $\Gamma_d$.  The specific question is: When does a discretization map $R_d$ make Diagram~\eqref{DiagramRd2} commutative? 

\begin{equation}\label{DiagramRd2}
\xymatrix{& TTQ \ar[d]^{\tau_{TQ}} \ar[rr]^{R_d^{T}} && TQ\times TQ   \ar[d]^{(R_d,R_d)} \\TQ \ar[r]^{{\rm Id}_{TQ}} \ar[ur]^\Gamma & TQ  \ar[d]<2pt>^{R_d} && Q\times Q \times Q \times Q \\    & Q\times Q \ar[u]<2pt>^{R_d^{-1}} \ar[urr]^{\Gamma_d }&&  }
\end{equation}

\begin{proposition} Let $Q$ be a vector space and $\Gamma$ be a SODE.  If $R_d\colon TQ \rightarrow Q\times Q$ is the discretization map on $Q$ defined from the $\theta$-method: 
	\begin{equation}\label{eq:thetaSode}R_d(q,v)=(q-\theta \, v, q+ (1-\theta)\, v),\end{equation}
	then Diagram~\eqref{DiagramRd2} is commutative, that is, $$(R_d,R_d)\circ R_d^{T}\circ \Gamma \circ R_ d^{-1}\colon Q\times Q \rightarrow Q\times Q \times Q \times Q$$ defines a second order discrete equation (SOdE).
	\end{proposition}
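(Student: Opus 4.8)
The plan is to evaluate the composite $\Gamma_d = (R_d,R_d)\circ R_d^{T}\circ \Gamma \circ R_d^{-1}$ in global linear coordinates on the vector space $Q$ and to check that its image lies in the ``second-order'' submanifold of $Q\times Q\times Q\times Q$ on which the two central factors coincide. This coincidence is precisely the defining feature of a SOdE, the map being then readable as $\Gamma_d(q_0,q_1)=(p_0,m,m,p_1)$, so that the fourth entry supplies $\tilde\Gamma_d$ and the evolution $m\mapsto p_1$ given $(p_0,m)$ is well defined.

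First I would invert the $\theta$-method. From $R_d(q,v)=(q-\theta v,\,q+(1-\theta)v)$ one gets $R_d^{-1}(q_0,q_1)=\bigl((1-\theta)q_0+\theta q_1,\;q_1-q_0\bigr)=:(\bar q,\bar v)$, so that $R_d(\bar q,\bar v)=(q_0,q_1)$. Next I apply $\Gamma$: since $\Gamma$ is a SODE, the condition $\tau_{TQ}(\Gamma)=T\tau_Q(\Gamma)$ forces its $\dot q$-slot to equal the velocity, so in coordinates $(q,v,\dot q,\dot v)$ of $TTQ$ we have $\Gamma(\bar q,\bar v)=(\bar q,\bar v,\bar v,a)$ with $a=\Gamma^i(\bar q,\bar v)$ the acceleration. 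This is the step where the second-order nature of $\Gamma$ enters, and it has a pleasant consequence for the lift: because the $v$- and $\dot q$-slots already agree, the canonical involution fixes this point, $\kappa_Q(\bar q,\bar v,\bar v,a)=(\bar q,\bar v,\bar v,a)$.

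Then I would push through $R_d^{T}=TR_d\circ\kappa_Q$. Using $R_d(\bar q,\bar v)=(q_0,q_1)$ and the constant Jacobian of the $\theta$-method, $TR_d$ sends this point to the tangent vector of $Q\times Q$ at $(q_0,q_1)$ with fibre $D_{(\bar q,\bar v)}R_d\,(\bar v,a)^T=(\bar v-\theta a,\;\bar v+(1-\theta)a)$; under the identification $T(Q\times Q)\simeq TQ\times TQ$ this reads $\bigl((q_0,\bar v-\theta a),(q_1,\bar v+(1-\theta)a)\bigr)$. Applying $(R_d,R_d)$ factorwise then gives an explicit element of $Q^4$ whose four entries I would write out. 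The crux is the comparison of the two central entries: the second is $q_0+(1-\theta)(\bar v-\theta a)$ and the third is $q_1-\theta(\bar v+(1-\theta)a)$, and their difference collapses to $(q_0-q_1)+\bigl[(1-\theta)+\theta\bigr]\bar v+\bigl[-(1-\theta)\theta+\theta(1-\theta)\bigr]a$. Here the $\bar v$-coefficient is $1$, so that term equals $\bar v=q_1-q_0$ and cancels $q_0-q_1$, while the mixed acceleration terms cancel identically; hence the two central entries agree and the image lands in the SOdE submanifold.

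The main obstacle is precisely this final cancellation, and it is where the two hypotheses are used. That the acceleration contributions in the central entries cancel relies on $\partial_v R^1$ and $\partial_v R^2$ being constant, forcing $Q$ to be a vector space so that the Jacobian of $R_d$ is independent of the base point and no curvature-type terms appear; and that the $\bar v$-terms recombine into $q_1-q_0$ is exactly property~2 of Definition~\ref{def:DiscreteMap2}, namely $\partial_v R^2-\partial_v R^1={\rm Id}$, here realized by the affine coefficients $-\theta$ and $1-\theta$. The remaining bookkeeping, tracking the point through $\kappa_Q$ and through the identification $T(Q\times Q)\simeq TQ\times TQ$, is routine once the SODE condition is invoked to trivialize the involution.
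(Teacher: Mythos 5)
Your proposal is correct and follows essentially the same route as the paper: the paper reduces commutativity of the diagram to the condition that the second and third components of $(R_d,R_d)\circ R_d^{T}\circ \Gamma \circ R_d^{-1}$ coincide, i.e.\ $R^2_d\bigl(T_{(q,v)}R^1_d(\Gamma(q,v))\bigr)=R^1_d\bigl(T_{(q,v)}R^2_d(\Gamma(q,v))\bigr)$, and then asserts that the $\theta$-method satisfies this by a ``straightforward computation''. You use exactly this criterion and additionally carry out the computation the paper omits (including the useful observation that the SODE condition makes $\kappa_Q$ fix the point $(\bar q,\bar v,\bar v,a)$), with the final cancellation $[(1-\theta)+\theta]\bar v = q_1-q_0$ and the vanishing of the acceleration terms verified correctly.
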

\begin{proof}
Let $(q,v)$ local coordinates for $TQ$. 
Let $R_d(q,v)=\left(R^1_d(q,v),R^2_d(q,v)\right)$, we compute
\begin{equation*}
 \left(R_d^{T}\circ \Gamma\right)(q,v)=\left(T_{(q,v)}R^1_d (\Gamma(q,v)), T_{(q,v)}R^2_d (\Gamma(q,v))\right).
\end{equation*}
If we apply now $(R_d,R_d)\colon TQ\times TQ \rightarrow Q\times Q \times Q\times Q$, the resulting expression defines a SOdE if and only if the second and third component are equal, that is, 
\begin{equation}\label{eq:RdSODE} R^2_d(T_{(q,v)}R^1_d (\Gamma(q,v)))=R^1_d(T_{(q,v)}R^2_d (\Gamma(q,v))).\end{equation}
It is a straightforward computation to verify that the discretization maps defined from the $\theta$-method in~\eqref{eq:thetaSode} satisfy Equation~\eqref{eq:RdSODE}. 
\end{proof}

In fact, the above proposition could be stated more generally. Any discretization map that satisfies Equation~\eqref{eq:RdSODE} defines a SOdE by the tangent lift of that map. 

Equation~\eqref{eq:RdSODE} is equivalent to the commutativity of the following diagram:
\begin{equation}\label{DiagramConditiondSODE}
\xymatrix{ T^{(2)}Q\subset TTQ  \ar[rr]^{TR^2_d} && TQ \ar[d]_{R^1_d} \\ TQ \ar[u]^{\Gamma} \ar[d]_{\Gamma} && Q \\ T^{(2)}Q\subset TTQ  \ar[rr]^{TR^1_d} && TQ \ar[u]^{R^2_d} 
}
\end{equation}

In particular, if the discretization map $R_d: TQ\rightarrow Q\times Q$ is defined from a standard retraction map as in Definition \ref{def-RetractMap}, that is, $R_d^1=\tau_Q$, then  Diagram~\eqref{DiagramConditiondSODE} is always commutative
\begin{equation*}
\xymatrix{ T^{(2)}Q\subset TTQ  \ar[rr]^{TR^2_d} && TQ \ar[d]_{\tau_Q} \\ TQ \ar[u]^{\Gamma} \ar[d]_{\Gamma} \ar[rr]^{\tau_Q} && Q \\ T^{(2)}Q\subset TTQ  \ar[rr]^{TR^1_d} && TQ \ar[u]^{R^2_d} 
}
\end{equation*}
since $(\tau_Q\circ TR_d^2)(\Gamma)=
(R_d^2\circ T\tau_Q)(\Gamma)$.  Therefore, any standard retraction map defines a SOdE $\Gamma_d$.

\section{Construction of geometric integrators from discretization maps} \label{Sec:GeomInt}

In this section we describe how geometric integrators are obtained for both Hamiltonian and Euler-Lagrange equations by discretizing their equations using discretization maps. In Section~\ref{Sec:discreteVarCalc}, we establish the relation with discrete variational calculus  where  the variational principles are discretized to obtain the discrete flow (see \cite{MW_Acta}). 

In Section~\ref{Sec:GeomIntHam} we look at the Hamiltonian framework \cite{AbMa}. Hamiltonian systems have the property that the associated flow is a symplectic transformation. As described in~\cite{sanz-serna,hairer,blanes}, it is important to define numerical methods that also preserve that property. Remember that a numerical one-step method is called symplectic if the one-step map, in other words, the discrete flow, is symplectic whenever the method is applied to a smooth Hamiltonian system. 

Second, we describe geometric integrators obtained from the Lagrangian viewpoint in Section~\ref{Sec:GeomIntLag}. 

In order to describe Hamiltonian and Lagrangian mechanics, we consider the symplectic manifold $(T^*Q,\omega_Q)$ that has the musical isomorphisms
$\omega_Q^\flat: {\mathfrak X}(T^*Q)\rightarrow \Omega^1 (T^*Q)$ defined by $\omega_Q^\flat(X)=\alpha$ where 
$i_X\omega_Q=\alpha$ (see for instance  \cite{LiMarle}). The inverse of $\omega_Q^\flat$ is denoted by $\omega_Q^\sharp$, that is, $\omega_Q^\sharp=(\omega_Q^\flat)^{-1}$.

\subsection{Geometric integrators in Hamiltonian framework}\label{Sec:GeomIntHam}

 Let  $H: T^*Q\rightarrow {\mathbb R}$ be a Hamiltonian function with corresponding Hamiltonian vector field $X_H$ derived from Hamilton's equations: 
 \[
 i_{X_H}\omega_Q=dH\, .
 \]
 The triple $(T^*Q,\omega_Q,H)$ defines a Hamiltonian system. 
 Equivalently, an integral curve of $X_H$ is solution to Hamilton's equations:
 \[
 \frac{dq^i}{dt}=\frac{\partial H}{\partial p_i}\; ,\qquad \frac{dp_i}{dt}=-\frac{\partial H}{\partial q^i}\, ,
 \]
 where $(q^i, p_i)$ are canonical coordinates on $T^*Q$ (see \cite{AbMa}). 
 In other words,  a  solution $\gamma\colon I \rightarrow T^*Q$  of Hamilton's equations must satisfy
\[\omega_Q^\flat \left(\dot{\gamma}(t)\right) ={\rm d} H(\gamma(t))\,, \mbox{  equivalently } \dot{\gamma}(t) =\omega^\sharp \left({\rm d} H(\gamma(t))\right) .\]

A discretization map on $T^*Q$, that is, $R^{TT^*Q}_d\colon TT^*Q\rightarrow T^*Q\times T^*Q$ defines the following numerical integrator for step size $h$:
\begin{equation} \label{Eq:HMethod}
\left(R^{TT^*Q}_d\right)^{-1}(q_0, p_0; q_1, p_1)=\omega^\sharp \left( h\, {\rm d}H\left(\left(\tau_{T^*Q}\circ \left(R^{TT^*Q}_d\right)^{-1}\right)(q_0, p_0; q_1, p_1)\right)\right) \, .
\end{equation}

Equivalently, similar to Equation~\eqref{eq:2ndmethod-SODE}, we have
\begin{equation}
\label{Eq:HMethod2}
h\, X_H \left(\left(\tau_{T^*Q}\circ \left(R^{TT^*Q}_d\right)^{-1}\right)(q_0, p_0; q_1, p_1)\right) =   \left(R^{TT^*Q}_d\right)^{-1}(q_0, p_0; q_1, p_1) .
\end{equation}

This numerical integrator may be defined for any discretization map on $T^*Q$. However, if such a map is the cotangent lift of a discretization map on $Q$ (see Section~\ref{Sec:coTliftRd}), then the numerical integrator is symplectic as stated in the following proposition. 

For the proof we need to recall the notion of a Lagrangian submanifold of a symplectic manifold $(M, \omega)$.

An immersed submanifold $N$ of $M$, or immersion,
$f : N  \rightarrow M$ is  Lagrangian if so is the space $Tf (T_x N)$  as a subspace of $T_{f(x)} M$ for each point $x \in N$, that is, $Tf (T_x N)=(Tf (T_x N))^\perp$ where $^\perp$ denotes the orthogonal complement of the subspace with respect to the symplectic form.  Note that an immersion $f\colon N\rightarrow M$ is Lagrangian if and only if $f^*\omega=0$ and the dimension of $N$ is half the dimension of $M$. The most common way to define a Lagrangian submanifold of a symplectic manifold is as graph of a closed one-form.

\begin{proposition}\label{Prop:HsymplecticMethod} Let $R_d\colon TQ\rightarrow Q\times Q$ be a discretization map on $Q$ and $H: T^*Q\rightarrow {\mathbb R}$ be a Hamiltonian function. Equation~\eqref{Eq:HMethod} written for the cotangent lift of $R_d$, that is, $R_d^{T^*}$, defines a symplectic integrator of the Hamiltonian system $(T^*Q,\omega_Q, H)$. 
\end{proposition}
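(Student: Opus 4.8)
The plan is to show that the one-step map $\Phi_h\colon T^*Q\to T^*Q$ implicitly defined by Equation~\eqref{Eq:HMethod} has a graph that is a Lagrangian submanifold of $(T^*Q\times T^*Q,\Omega_{12})$, using the standard equivalence: for a diffeomorphism $F\colon T^*Q\to T^*Q$ the condition $F^*\omega_Q=\omega_Q$ holds if and only if the graph $\{(\mu,F(\mu))\}$ is Lagrangian for $\Omega_{12}=\mathrm{pr}_2^*\omega_Q-\mathrm{pr}_1^*\omega_Q$, because pulling $\Omega_{12}$ back along $\mu\mapsto(\mu,F(\mu))$ yields $F^*\omega_Q-\omega_Q$ and the graph has half the dimension of $T^*Q\times T^*Q$. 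So first I would fix $h$ small, invoke the inverse function theorem so that Equation~\eqref{Eq:HMethod} genuinely defines such a local map $\Phi_h$, and reduce the statement to an $\Omega_{12}$-Lagrangian property of its graph.

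Next I would identify that graph explicitly. Writing $w=(R_d^{T^*})^{-1}(q_0,p_0;q_1,p_1)\in TT^*Q$, Equation~\eqref{Eq:HMethod2} reads $h\,X_H(\tau_{T^*Q}(w))=w$, i.e. $w$ lies on the image of the section $h\,X_H\colon T^*Q\to TT^*Q$. Hence the set of solution pairs $(q_0,p_0;q_1,p_1)$ is exactly $R_d^{T^*}\big(\mathrm{Im}(h\,X_H)\big)$, which is the graph of $\Phi_h$. By Proposition~\ref{prop:sympl}, $R_d^{T^*}$ is a symplectomorphism from $(TT^*Q,{\rm d}_T\omega_Q)$ onto $(T^*Q\times T^*Q,\Omega_{12})$, so it sends Lagrangian submanifolds to Lagrangian submanifolds. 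The problem therefore reduces to proving that $\mathrm{Im}(h\,X_H)$ is a Lagrangian submanifold of $(TT^*Q,{\rm d}_T\omega_Q)$.

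This last step is the crux. The key observation is that $h\,X_H=X_{hH}$ is the Hamiltonian vector field of $hH$, since $i_{h X_H}\omega_Q={\rm d}(hH)$, so it suffices to show that the image of any Hamiltonian vector field is ${\rm d}_T\omega_Q$-Lagrangian. I would prove this through the fibrewise musical map $\flat\colon TT^*Q\to T^*T^*Q$, $w\mapsto i_w\omega_Q$, a vector bundle isomorphism over the identity of $T^*Q$; a short coordinate check (with $\omega_Q={\rm d}q^i\wedge {\rm d}p_i$ it reads $(q,p,\dot q,\dot p)\mapsto(q,p,-\dot p,\dot q)$) gives $\flat^*\omega_{T^*Q}=-\,{\rm d}_T\omega_Q$, so $\flat$ is an (anti)symplectomorphism onto the canonical $(T^*T^*Q,\omega_{T^*Q})$. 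Since $i_{X_{hH}}\omega_Q={\rm d}(hH)$, the map $\flat$ carries $\mathrm{Im}(X_{hH})$ onto the image of the exact one-form ${\rm d}(hH)$, which is a Lagrangian submanifold of the canonical cotangent bundle; pulling back shows $\mathrm{Im}(X_{hH})$ is Lagrangian in $(TT^*Q,{\rm d}_T\omega_Q)$, the dimension being manifestly $\dim T^*Q=\tfrac12\dim TT^*Q$.

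Combining the three steps yields the claim: $\mathrm{graph}(\Phi_h)=R_d^{T^*}\big(\mathrm{Im}(hX_H)\big)$ is $\Omega_{12}$-Lagrangian, hence $\Phi_h^*\omega_Q=\omega_Q$ and the integrator is symplectic. The main obstacle is the Lagrangian-image lemma of the third paragraph; an alternative that stays closer to the paper's machinery is to transport $\mathrm{Im}(hX_H)$ by the Tulczyjew isomorphism $\alpha_Q$ (already used here, and a symplectomorphism onto $(T^*TQ,\omega_{TQ})$) and recognise the result as the image of a differential, but this route needs a Lagrangian/Legendre description that the $\flat$-argument avoids.
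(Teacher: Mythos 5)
Your proof is correct and takes essentially the same route as the paper: both identify the solution set of Equation~\eqref{Eq:HMethod} with $R_d^{T^*}\bigl(\mathrm{Im}(hX_H)\bigr)$, invoke Proposition~\ref{prop:sympl} to carry a Lagrangian submanifold of $(TT^*Q,{\rm d}_T\omega_Q)$ into one of $(T^*Q\times T^*Q,\Omega_{12})$, and conclude that the resulting submanifold is locally the graph of a symplectomorphism, so the method is symplectic. The only difference is that you prove explicitly, via the flat map $\flat$ and the identity $\flat^*\omega_{T^*Q}=-{\rm d}_T\omega_Q$, what the paper asserts implicitly --- namely that $\mathrm{Im}(hX_H)$ is Lagrangian in $(TT^*Q,{\rm d}_T\omega_Q)$ because ${\rm Im}\,{\rm d}(hH)$ is Lagrangian in $(T^*T^*Q,\omega_{T^*Q})$.
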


\begin{proof}
 As the submanifold ${\rm d}H\left(\left(\tau_{T^*Q}\circ \left(R_d^{T^*}\right)^{-1}\right)(q_0, p_0; q_1, p_1)\right)$ in Equation~\eqref{Eq:HMethod} is Lagrangian on $(T^*T^*Q, \omega_{T^*Q})$,	Proposition~\ref{prop:sympl} guarantees that Equation~\eqref{Eq:HMethod} determines a Lagrangian submanifold of $(T^*Q\times T^*Q, \Omega_{12})$. Locally, such a manifold can be expressed as the graph of a local symplectomorphism $\varphi_h\colon T^*Q\rightarrow T^*Q$, see~\cite{LiMarle} for more details. Consequently, the geometric method obtained from Equation~\eqref{Eq:HMethod} is symplectic. 
\end{proof}

Let us use the above result to obtain some of the symplectic numerical methods known in the literature.

\begin{example}
 Let $H\colon T^*Q\rightarrow \mathbb{R}$ be a Hamiltonian function, the cotangent lift~\eqref{eq:RT*midpoint},~\eqref{eq:RT*midpointInv} of the discretization map associated to the mid-point rule in Example~\ref{Ex:LiftBasic} used in Equation~\eqref{Eq:HMethod} leads to the following equations:
\begin{align*}\left(\frac{q_0+q_1}{2}, \right.&  \left. \frac{p_0+p_1}{2}, q_1-q_0, p_1-p_0\right)=\left(\frac{q_0+q_1}{2}, \frac{p_0+p_1}{2}, \right.\\ & \left. h\frac{\partial H}{\partial p}\left(\frac{q_0+q_1}{2}, \frac{p_0+p_1}{2}\right), 
-h\frac{\partial H}{\partial q}\left(\frac{q_0+q_1}{2}, \frac{p_0+p_1}{2}\right)\right)
\; .\end{align*}
Equivalently, the equations describe the following symplectic integrator:
\begin{eqnarray*}
\frac{q_1-q_0}{h}&=&\frac{\partial H}{\partial p}\left(\frac{q_0+q_1}{2}, \frac{p_0+p_1}{2}\right)\, ,\\
\frac{p_1-p_0}{h}&=&-\frac{\partial H}{\partial q}\left(\frac{q_0+q_1}{2}, \frac{p_0+p_1}{2}\right)\, .
\end{eqnarray*}
The above integrator corresponds with an implicit second-order symplectic method with initial condition $(q_0,p_0)$.
\demo
\end{example}

\begin{example}
The discretization map on $Q$,
		$
		R_d(q,v)=(q-v,q)\,,
		$ is lifted to the cotangent bundle as follows
\begin{equation*}
\begin{array}{cccc} R_d^{T^*}\colon & TT^*Q & \longrightarrow & T^*Q \times T^*Q \\ & (q,p,\dot{q},\dot{p}) &\longmapsto & (q-\dot{q}, p, q,p+\dot{p}) \, .\end{array}
\end{equation*}	
As $\left(R_d^{T^*}\right)^{-1}(q_0,p_0,q_1,p_1)=(q_1,p_0,q_1-q_0,p_1-p_0)$, Equation~\eqref{Eq:HMethod} leads to the following symplectic method:
\begin{eqnarray*}
	\frac{q_1-q_0}{h}&=&\frac{\partial H}{\partial p}\left(q_1,p_0\right) \, ,\\
	\frac{p_1-p_0}{h}&=&-\frac{\partial H}{\partial q}\left(q_1,p_0\right) \, .
\end{eqnarray*}
For a Hamiltonian function $H(p,q)=\dfrac{1}{2}p M p^T+V(q)$, with a constant positive definite matrix $M$,  the integrator is an explicit symplectic method.  \demo
\end{example}

\begin{example}
Now consider a Hamiltonian function $H: T^*S^2\rightarrow {\mathbb R}$ on $T^*S^2$ that we identify with the tangent bundle $T S^2$: 
\[
T^*S^2\equiv \{(x, p)\in {\mathbb R}^3\times {\mathbb R}^3\; \mid\; \norm{x}=1,  \; x\cdot p=0\}\, .
\]
For the discretization of the corresponding Hamiltonian equations we will use the discretization map 
$R_d: TS^2\rightarrow S^2\times S^2$ given by
\[
R_d(x, \xi)=\left(x, \frac{x+\xi}{\norm{x+\xi}}\right), 
\]
whose inverse is precisely: 
\[
R_d^{-1}(x_0, x_1)=\left(x_0, \frac{x_1}{x_0\cdot x_1} -x_0 \right)\, ,
\]
whenever it is well defined. Now, we will compute the inverse of the cotangent lift of the discretization map given in Equation~\eqref{eq:localinverseRd*}, that is,	\begin{equation*}
	\left(R_d^{T^*}\right)^{-1}(x_0,p_0; x_1,p_1)= \alpha_Q^{-1}\left(R_d^{-1} (x_0,x_1), (-p_0, p_1)\, { D}_{R_d^{-1} (x_0,x_1)}R_d\right)\, . 
	\end{equation*}
Having in mind the definition of $T^*S^2$, it can be computed that the matrix ${D}_{R_d^{-1} (x,y)}R_d$ is equal to: 
\[
{D}_{R_d^{-1} (x,y)}R_d
=
\left(
\begin{array}{rr}
{\rm Id}_{3\times 3}&0\\
(x\cdot y)\, {\rm Id}_{3\times 3}& C
\end{array}
\right)
\]
where $C$ is the matrix with entries
\begin{equation*}
c_{ij}=\left\{ \begin{array}{lcl} (x\cdot y)\left[1+(x\cdot y)y_ix_i-y_i^2\right]
&\mbox{if} & i=j, \\
(x\cdot y)\left[(x\cdot y)y_ix_j-y_iy_j\right]&\mbox{if} &  i\not=j\, . \end{array}\right.
\end{equation*}
Therefore,
\[
	\left(R_d^{T^*}\right)^{-1}(x_0,p_0; x_1,p_1)=
	\left(x_0, p_1 C\, ; \,  \frac{1}{x_0\cdot x_1} x_1-x_0, -p_0+(x_0\cdot x_1)p_1\right)\, .
\]
As a result, we obtain the following symplectic integrator for Hamilton's  equations: 
\begin{align*}
  \frac{1}{x_k\cdot x_{k+1}} x_{k+1}-x_k&=h\frac{\partial H}{\partial p}(x_k, p_{k+1}C)\, ,\\
   -p_k+(x_k\cdot x_{k+1})p_{k+1}&=-h\frac{\partial H}{\partial q}(x_k, p_{k+1}C) \, .\demo
\end{align*}

\end{example}

\begin{remark}
Another option to construct geometric integrators is to use an expression similar to Equation~\eqref{eq:first-method} but now adapted for Hamiltonian vector fields, that is, 
\begin{equation*}
\left(\left(R_d^{TT^*Q}\right)^2\circ h X_H\right)(q_k, p_k)=\left(\left(R_d^{TT^*Q}\right)^1\circ h X_H\right)(q_{k+1}, p_{k+1})\; . \end{equation*}
Note that here the discretization map on $T^*Q$ does not have to be the cotangent lift of one on $Q$. However, even if the cotangent lift is considered, the method is not necessarily symplectic. For instance,  for the discretization map coming from the mid-point rule in Examples~\ref{Ex:basic} and~\ref{Ex:LiftBasic}, we obtain the symmetric  second-order method: 
\begin{align*}
\frac{q_{k+1}-q_k}{h}&= \frac{1}{2}\left(
\frac{\partial H}{\partial p}(q_k, p_k)+\frac{\partial H}{\partial p}(q_{k+1}, p_{k+1})
\right) \; ,\\
\frac{p_{k+1}-p_k}{h}&= -\frac{1}{2}\left(
\frac{\partial H}{\partial q}(q_k, p_k)+\frac{\partial H}{\partial q}(q_{k+1}, p_{k+1})
\right) \; .\\
\end{align*}
However, this method is not  symplectic because, in general, $dq_{k+1}\wedge d p_{k+1}-dq_{k}\wedge d p_{k}\not= 0$ when restricted to the numerical scheme. \demo
\end{remark}
\begin{remark}
Observe that our method gives us a constructive way to derive symplectic integrators for Hamiltonian systems. It will be interesting to compare our methods with other previous approaches (\cite{LeZa}), specially when the configuration space is a Lie group and we can use well-known retraction maps such as exponential maps and other approximations. See \cite{IsMuNoZa,BouMa,CMO,BogMa}. \demo
\end{remark}
\subsection{Geometric integrators in Lagrangian framework}\label{Sec:GeomIntLag}

Let us consider a regular Lagrangian function $L: TQ\rightarrow {\mathbb R}$ so that there exists a second-order vector field $\Gamma_L$ on $TQ$ and Euler-Lagrange equations are given by 
$${\rm i }_{\Gamma_L}\Omega_L={\rm d} E_L,$$
where $E_L$ is the energy function and $\Omega_L$ is the symplectic Lagrange 2-form obtained by the pull-back of the Legendre map ${\mathcal F}L\colon TQ \rightarrow T^*Q$ of the natural symplectic form on $T^*Q$, that is, $\Omega_L=({\mathcal F}L)^*\omega_Q$ (see \cite{AbMa} for more details).

As in Equation~\eqref{eq:2ndmethod-SODE}, a discretization map on $TQ$, that is, $R^{TTQ}_d\colon TTQ \rightarrow TQ\times TQ$, defines the following numerical integrator:
\begin{equation}\label{eq:LMethod}
 R^{TTQ}_d\left(h\, \Gamma_L\left(\left(\tau_{TQ}\circ \left(R_d^{TTQ}\right)^{-1}\right)(q_0,v_0;q_1,v_1)\right)\right)=(q_0,v_0;q_1,v_1).
\end{equation}
Equivalently, \begin{equation*}
h\, \Gamma_L\left(\left(\tau_{TQ}\circ \left(R_d^{TTQ}\right)^{-1}\right)(q_0,v_0;q_1,v_1)\right)=\left(R_d^{TTQ}\right)^{-1}(q_0,v_0;q_1,v_1).
\end{equation*}

As the Lagrangian function is regular, we could move to the Hamiltonian framework and construct a symplectic numerical integrator using the cotangent lift of a discretization map on $Q$ as in Proposition~\ref{Prop:HsymplecticMethod}. It remains to prove if the obtained numerical integrator is a discretization map on $TQ$ as described in Definition~\ref{def:DiscreteMap2}.

Remember that the manifold $(TQ\times TQ, \Omega_L^1-\Omega_L^0)$ is symplectic. Locally, the symplectic 2-form is given by $\Omega_L^1-\Omega_L^0=({\mathcal F}L,{\mathcal F}L)^*( {\rm d}q_i^1\wedge {\rm d} p_i^1\,- {\rm d}q_i^0\wedge {\rm d} p_i^0)$.

\begin{proposition} \label{Prop:LsymplecticMethod} Let $R_d\colon TQ\rightarrow Q\times Q$ be a discretization map on $Q$ and $L: TQ\rightarrow {\mathbb R}$ be a regular Lagrangian function. The two following facts are satisfied:
	\begin{enumerate}
		\item[(a)] the map $R_d^{L}=({\mathcal F}L^{-1},{\mathcal F}L^{-1})\circ R_d^{T^*}\circ T{\mathcal F}L\colon TTQ \rightarrow TQ\times TQ$ defines a symplectic numerical integrator of the Euler-Lagrange equations for $L$;
		\item[(b)] the above-mentioned map $R_d^{L}$ is a discretization map on $TQ$. 
	\end{enumerate}
	\end{proposition}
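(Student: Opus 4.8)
The plan is to prove the two claims in sequence, leveraging the machinery already developed for the cotangent lift. For part (a), I would observe that the composite $R_d^L = (\mathcal{F}L^{-1}, \mathcal{F}L^{-1}) \circ R_d^{T^*} \circ T\mathcal{F}L$ is built from three maps, and I would track the symplectic structures through each one. First I would note that the Legendre map $\mathcal{F}L \colon TQ \to T^*Q$ is a diffeomorphism (since $L$ is regular), so its tangent lift $T\mathcal{F}L \colon TTQ \to TT^*Q$ is a diffeomorphism that intertwines the natural symplectic structures; concretely, $T\mathcal{F}L$ pulls back $\mathrm{d}_T\omega_Q$ to the analogous tangent-lifted form on $TTQ$ induced by $\Omega_L = (\mathcal{F}L)^*\omega_Q$. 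Next, Proposition~\ref{prop:sympl} tells us that $R_d^{T^*}$ is a symplectomorphism between $(TT^*Q, \mathrm{d}_T\omega_Q)$ and $(T^*Q\times T^*Q, \Omega_{12})$. Finally, the product map $(\mathcal{F}L^{-1}, \mathcal{F}L^{-1}) \colon T^*Q\times T^*Q \to TQ\times TQ$ transports $\Omega_{12} = \mathrm{pr}_2^*\omega_Q - \mathrm{pr}_1^*\omega_Q$ to $\Omega_L^1 - \Omega_L^0$. Composing these three symplectomorphisms yields that $R_d^L$ intertwines the tangent-lifted Lagrangian symplectic form with $\Omega_L^1 - \Omega_L^0$, which is exactly the statement that the induced one-step method is symplectic, by the same Lagrangian-submanifold argument used in Proposition~\ref{Prop:HsymplecticMethod}.

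For part (b), the plan is to verify directly that $R_d^L$ satisfies the two defining properties of a discretization map in Definition~\ref{def:DiscreteMap2}. The cleanest route is to exploit that each of the three factors preserves, in an appropriate sense, the ``zero section'' and ``identity-on-fibers'' structure. For the first property, I would evaluate $R_d^L$ on the zero section of $TTQ$: since $T\mathcal{F}L$ sends a zero tangent vector to a zero tangent vector over the corresponding point, and since $R_d^{T^*}(q,p,0,0) = (q,p;q,p)$ was established in the proof of Proposition~\ref{prop_liftT^*Rd}, applying $(\mathcal{F}L^{-1}, \mathcal{F}L^{-1})$ returns the diagonal point in $TQ\times TQ$. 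For the second property, I would differentiate along a curve $t\mapsto$ (fixed base point, $t\cdot$velocity) and use the chain rule through the three factors, again invoking the identity-difference property $T_{0}(R^{T^*}_d)^2 - T_{0}(R^{T^*}_d)^1 = \mathrm{Id}$ proved for the cotangent lift, together with the fact that conjugating by the linear isomorphisms $T\mathcal{F}L$ and $T\mathcal{F}L^{-1}$ preserves this identity relation on the appropriate fibers.

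The main obstacle I anticipate is keeping the identifications straight in part (b): the Legendre map and its lift shuffle the roles of base and fiber coordinates on $TTQ$ versus $TT^*Q$, so I would need to be careful that the ``velocity direction'' that the discretization-map conditions test is the one preserved under $T\mathcal{F}L$. The key technical point is that $T\mathcal{F}L$ is fiberwise linear over $\mathcal{F}L$, so its restriction to a tangent fiber $T_{(q,v)}TQ$ is an isomorphism onto $T_{\mathcal{F}L(q,v)}T^*Q$ sending the zero vector to the zero vector; this guarantees that the tangent-map-at-zero computation conjugates cleanly and the identity-difference condition is preserved. Once this fiberwise-linearity bookkeeping is handled, both conditions reduce to the already-established cotangent-lift versions, so the remainder is routine.
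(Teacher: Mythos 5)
Your proposal is correct and follows essentially the same route as the paper's proof: part (a) by composing the three symplectomorphisms (the lifted Legendre map, $R_d^{T^*}$ via Proposition~\ref{prop:sympl}, and the product Legendre inverse) and invoking the Lagrangian-submanifold argument of Proposition~\ref{Prop:HsymplecticMethod}; part (b) by direct verification of Definition~\ref{def:DiscreteMap2}, reducing both conditions to the already-proved properties of $R_d^{T^*}$ through the fiberwise linearity of $T\mathcal{F}L$, which is exactly the conjugation $T\mathcal{F}L^{-1}\circ \mathrm{Id}\circ T\mathcal{F}L=\mathrm{Id}$ that the paper carries out in coordinates via the fiber derivative $D_{3,4}(T\mathcal{F}L)$. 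The only cosmetic difference is that you phrase the paper's coordinate computation intrinsically; note also that regularity of $L$ only makes $\mathcal{F}L$ a local diffeomorphism (as the paper states), but this suffices for, and does not affect, your argument.
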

\begin{proof}
First, we prove property $(a)$. As the Lagrangian function is regular, the Legendre map is a local diffeomorphism. Propositions~\ref{Prop_CotLift} and~\ref{Prop:HsymplecticMethod} guarantee that $R_d^L$ is a symplectomorphism  because it is a composition of symplectomorphisms. Hence, $R_d^{L}$ defines a symplectic numerical integrator in Equation~\eqref{eq:LMethod}.

The diagram below shows the constructive process for $R_d^{L}$:

\begin{equation*}
\xymatrix{  T^*Q \times T^*Q \ar[rrr]^{({\mathcal F}L^{-1},{\mathcal F}L^{-1})} &&& TQ\times TQ
	 \\ &  TT^*Q \ar[lu]_{R_d^{T^*}} & TTQ \ar[l]_{T{\mathcal F}L}  \ar[ru]^{R_d^{L}} & Q\times Q \\ &  T^*Q \ar[u]^{X_H} & TQ \ar[ru]^{R_d} \ar[l]_{{\mathcal F}L}  \ar[u]_{\Gamma_L} & }
\end{equation*}

In other words, the Lagrangian submanifold ${\rm Im}\, \Gamma_L$ of $\left(TTQ,\left(T{\mathcal F}L\right)^*({\rm d}_T \omega_Q)\right)$ is preserved by $R_d^{L}$ and the numerical method in Equation~\eqref{eq:LMethod} is symplectic. 

For $(b)$, we must prove the properties in Definition~\ref{def:DiscreteMap2} for the map $R^L_d \colon TTQ \rightarrow TQ\times TQ$.

\begin{enumerate}
	    \item Note that $R^L_d(v_q,0_{v_q})=(v_q,v_q)$ because
	    \begin{align*}
	        R^L_d(v_q,0_{v_q})&=\left(({\mathcal F}L^{-1},{\mathcal F}L^{-1})\circ R_d^{T^*}\right)\left({\mathcal F}L(v_q); 0_{{\mathcal F}L(v_q)}\right)\\&=({\mathcal F}L^{-1},{\mathcal F}L^{-1})\left({\mathcal F}L(v_q); {\mathcal F}L(v_q)\right)\\&=(v_q,v_q)\, .
	    \end{align*}
	    The second equality is true because $R_d^{T^*}$ is a discretization map on $T^*Q$ as shown in Proposition~\ref{prop_liftT^*Rd}.
	    \item We must prove that $T_{(q,v,0,0)}\left(R^L_d\right)_{(q,v)}^2-T_{(q,v,0,0)}\left(R^L_d\right)_{(q,v)}^1$ is the identity map from $T_{(q,v,0,0)}TTQ\simeq T_{(q,v)}TQ$ to itself.
	    
	    Let us first compute it for $i=1,2$:
	    \begin{align*}
	        \left.\dfrac{\rm d}{{\rm d}\, t}\right\vert_{t=0}\,& {\mathcal F}L^{-1}\left( \left( R_d^{T^*} \right)^i \left(T {\mathcal F} L\right) (q,v,t\, \dot{q},t\, \dot{v})\right) \\=& D {\mathcal F}L^{-1}_{(q,\frac{\partial L}{\partial v})} T_{(q,\frac{\partial L}{\partial v},0,0)} \left( R_d^{T^*}\right)^i D_{3,4}\left(T {\mathcal F} L\right)_{(q,v,0,0)}
	    \end{align*}
	    
	    Note that  $D_{3,4}\left(T {\mathcal F} L\right)_{(q,v,0,0)}$ is the fiber derivative of the tangent map $T{\mathcal F} L$. Knowing that the tangent map is linear on the fiber, 
	    together with the fact that $R_d^{T^*}$ is a discretization map on $T^*Q$ and it satisfies the second property in Definition~\ref{def:DiscreteMap2}, we can conclude that the map  $R^L_d$ satisfies the second property for being a discretization map on $TQ$.
	\end{enumerate}
\end{proof}

It can be proved that only for a very specific discretization map $R_d$ on $Q$ and Lagrangian function, the discretization map $R^L_d$ of $TTQ$ is the tangent lift of $R_d$.

\begin{corollary}\label{Corol:Lkinetic} Let  $L(q,v)=\dfrac{1}{2}\, v^T \, M v-V(q)$ be the Lagrangian function, being $M$ a positive-definite symmetric constant mass matrix and $V$ the potential function. If $R_d\colon TQ\rightarrow Q \times Q$ is the discretization map on $Q$ given by the mid-point rule in Example~\ref{Ex:basic}, then $R^L_d$ is the tangent lift of $R_d$.
	\end{corollary}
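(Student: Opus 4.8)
The plan is to evaluate the composition $R_d^{L}=({\mathcal F}L^{-1},{\mathcal F}L^{-1})\circ R_d^{T^*}\circ T{\mathcal F}L$ in natural coordinates on $TTQ$ and to match the outcome against the tangent-lift formula already recorded in Example~\ref{Ex:LiftBasic}. Since $R_d^{L}$ is built only from the Legendre map ${\mathcal F}L$ and the cotangent lift $R_d^{T^*}$, and not from the dynamics $\Gamma_L$, the potential $V$ will play no role whatsoever; the argument reduces entirely to understanding ${\mathcal F}L$ for this kinetic-type Lagrangian. First I would record the Legendre map: for $L(q,v)=\frac{1}{2}\,v^{T}Mv-V(q)$ with $M$ constant, symmetric and positive-definite, the fiber derivative is ${\mathcal F}L(q,v)=(q,Mv)$, with inverse ${\mathcal F}L^{-1}(q,p)=(q,M^{-1}p)$.

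Second, I would compute the tangent lift $T{\mathcal F}L\colon TTQ\rightarrow TT^*Q$. Writing ${\mathcal F}L=(F^1,F^2)$ with $F^1(q,v)=q$ and $F^2(q,v)=Mv$, the Jacobian $D{\mathcal F}L$ is block lower-triangular, and the off-diagonal block $\partial_q(Mv)$ vanishes precisely because $M$ is constant, so $D{\mathcal F}L=\operatorname{diag}({\rm Id},M)$. Hence, in coordinates $(q,p,\dot q,\dot p)$ on $TT^*Q$, one gets $T{\mathcal F}L(q,v,\dot q,\dot v)=(q,Mv,\dot q,M\dot v)$.

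Third, I would substitute into the definition of $R_d^{L}$ using the explicit cotangent-lift formula~\eqref{eq:RT*midpoint}, namely $R_d^{T^*}(q,p,\dot q,\dot p)=\left(q-\frac{1}{2}\dot q,\;p-\frac{\dot p}{2};\;q+\frac{1}{2}\dot q,\;p+\frac{\dot p}{2}\right)$. Applying $R_d^{T^*}$ to $(q,Mv,\dot q,M\dot v)$ yields $\left(q-\frac{1}{2}\dot q,\;Mv-\frac{1}{2}M\dot v;\;q+\frac{1}{2}\dot q,\;Mv+\frac{1}{2}M\dot v\right)$, and then the factor $({\mathcal F}L^{-1},{\mathcal F}L^{-1})$ strips off the $M$'s through $M^{-1}M={\rm Id}$, producing $R_d^{L}(q,v,\dot q,\dot v)=\left(q-\frac{1}{2}\dot q,\;v-\frac{1}{2}\dot v;\;q+\frac{1}{2}\dot q,\;v+\frac{1}{2}\dot v\right)$. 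Matching coordinate conventions on $TTQ$ with those of Example~\ref{Ex:LiftBasic}, this is exactly $R_d^{T}$ evaluated at the same point, which proves the claim.

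The step where the hypotheses are genuinely used, and hence the main thing to check carefully, is the cancellation $M^{-1}\!\left(Mv\pm\frac{1}{2}M\dot v\right)=v\pm\frac{1}{2}\dot v$: it is clean only because $M$ is constant, so that the off-diagonal block $\partial_q(Mv)$ in $D{\mathcal F}L$ vanishes and $T{\mathcal F}L$ introduces no extra $\dot q$-dependent term into the momentum direction. A $q$-dependent mass matrix would break this match, and the diagonal structure of $R_d^{T^*}$ that lets the two $\mathcal{F}L^{-1}$ factors act independently on the two copies of $T^*Q$ is special to the midpoint rule; this is why the coincidence $R_d^{L}=R_d^{T}$ is so rigid. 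Everything else is linear bookkeeping.
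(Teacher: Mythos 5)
Your proposal is correct and follows essentially the same route as the paper: both proofs compute $R_d^{L}=({\mathcal F}L^{-1},{\mathcal F}L^{-1})\circ R_d^{T^*}\circ T{\mathcal F}L$ in coordinates using ${\mathcal F}L(q,v)=(q,Mv)$, exploit the constancy of $M$ so that the mass matrix cancels, and match the result against the tangent lift $R_d^{T}$ of Example~\ref{Ex:LiftBasic}. The only organizational difference is that the paper first writes the matching condition $R_d^{L}=R_d^{T}$ for a general discretization map (two equalities relating ${\rm D}R_d^{-1}$ and ${\rm D}R_d$) and then verifies it for the mid-point rule, whereas you substitute the explicit mid-point formula~\eqref{eq:RT*midpoint} from the outset; this is a cosmetic rearrangement of the same computation.
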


\begin{proof}
The Legendre transformation for $L$ in the corollary is ${\mathcal F}L(q,v)=(q,Mv)$ and the inverse map is ${\mathcal F}L^{-1}(q,p)=(q,M^{-1}p)$. Thus,
	\begin{equation*}
	    R^L_d(q,v,\dot{q},\dot{v})=\left(R^1_d(q,\dot{q}), -(\dot{v},v){\rm D}R_d^{-1}(q,\dot{q})_{\ast,1};R^2_d(q,\dot{q}), (\dot{v},v){\rm D}R_d^{-1}(q,\dot{q})_{\ast,2} \right)
	\end{equation*}
	 where $A_{\ast,i}$ denotes the $i$th column of the matrix $A$. This expression is equal to the tangent lift $R_d^T$ if
	 \begin{align*}
	     -(\dot{v},v){\rm D}R_d^{-1}(q,\dot{q})_{\ast,1}&= D_{(q,v)}R_d^1(q,v) (\dot{q},\;\dot{v})^T\\
	     (\dot{v},v){\rm D}R_d^{-1}(q,\dot{q})_{\ast,2}&=D_{(q,v)}R_d^2(q,v) (\dot{q},\;\dot{v})^T\, .
	 \end{align*} Both equalities are satisfied if the discretization map $R_d$ is given by the mid-point rule, see Example~\ref{Ex:basic}.
\end{proof}

\begin{example}
	Let us consider a Lagrangian second-order vector field given by $\ddot{q}=-M^{-1} \, \nabla V(q)$. The numerical method in Equation~\eqref{eq:LMethod} for the mid-point rule described in Example~\ref{Ex:LiftBasic} becomes:

	\begin{align*}
	&R^{T}_d\left(h\, \Gamma_L\left( \dfrac{q_0+q_1}{2}, \dfrac{v_0+v_1}{2} \right)\right)=(q_0,v_0,q_1,v_1) \\ &
	R^{T}_d\left(\dfrac{q_0+q_1}{2}, \dfrac{v_0+v_1}{2}  , h(q_1-q_0), -h\, M^{-1}\, \nabla V\left(\dfrac{q_0+q_1}{2} \right) \right)=(q_0,v_0,q_1,v_1)\, .
		\end{align*}
		Given $(q_0,v_0)$, the numerical integrator is defined implictly by 
		\begin{eqnarray*}
			\dfrac{v_0+v_1}{2}&=&\dfrac{q_1-q_0}{h}\; ,\\
			\dfrac{v_1-v_0}{h}&=&-M^{-1}\, \nabla V\left(\dfrac{q_0+q_1}{2}\right) \, .
			\end{eqnarray*}
		After some straightforward computations, we obtain that this discrete method is rewritten as an implicit second order discrete equation given by: 
		\[
		\dfrac{q_2-2q_1+q_0}{h^2}=-\dfrac{1}{2}M^{-1}\left(\nabla V\left(\dfrac{q_0+q_1}{2}\right)+ V\left(\dfrac{q_1+q_2}{2}\right) \right)
		\]
		In the next subsection we will explore the relation of these  methods with discrete variational calculus.  \demo

\end{example}

 \subsection{Discrete variational calculus} \label{Sec:discreteVarCalc}

As mentioned in~\cite{MW_Acta}, a usual way to design symplectic integrators from a Lagrangian system consists of discretizing the variational principle using a  discrete Lagrangian map.  Many of these discrete maps are obtained from a continuous Lagrangian map and a discretization map on $Q$ by discretizing the continuous action as follows 
\[
{\mathcal S}(q_0, q_1)=\int_0^h L(q(t), \dot{q}(t))\; dt\approx h L\left(\frac{1}{h}R_d^{-1}(q_0, q_1)\right)=L^h_d(q_0,q_1)\, ,
\]
where $q(t)$ is the unique solution of the Euler-Lagrange equations such that $q(0)=q_0$ and $q(h)=q_1$ with $h$ enough small. Observe that if 
$R_d^{-1}(q_0, q_1)=v_q\in T_qQ$ then $\frac{1}{h}R_d^{-1}(q_0, q_1)=\frac{1}{h}v_q\in T_qQ$.
Therefore,  the discrete Lagrangian  $L^h_d: Q\times Q\rightarrow {\mathbb R}$ is defined by  $L^h_d=h\left(L\circ \frac{1}{h}R_d^{-1}\right)$.

If we  consider the Hamiltonian function $H(p,q)=\langle p, \dot{q} \rangle -L(q,\dot{q})$, then  we can simultaneously consider the discretization of both the Lagrangian and Hamiltonian framework. The following diagram is commutative by construction (see \cite{Tu} for the left-hand side of the diagram): 
\begin{equation*}
\xymatrix{  &&TT^*Q  &&& T^*Q\times T^*Q\ar@<-.5ex>[lll]_{\frac{1}{h}(R_d^{T^*})^{-1}} \\ 
&&T^*TQ\ar[u]^{\alpha_Q^{-1}}&&& T^*(Q\times Q)\ar[u]_{\Phi^{-1}}\ar@<-.5ex>[lll]_{\frac{1}{h}\widehat{R_d}^{-1}} \\
T^*Q\ar@<.5ex>[rruu]^{X_H}&&\ar@<-.5ex>[ll]_{{\mathcal F}L}TQ\ar[u]^{dL}&&&\ar@<-.5ex>[lll]_{\frac{1}{h}R^{-1}_d}Q\times Q
\ar[u]_{d L^h_d}
}
\end{equation*}
In this diagram we understand that the multiplication by $1/h$ in  $\frac{1}{h}R^{-1}_d$ is with respect to the vector bundle structure given by $\tau_Q: TQ\rightarrow Q$, $\frac{1}{h}\widehat{R_d}^{-1}$ with respect to the vector bundle structure given by $\pi_{TQ}: T^*TQ\rightarrow T^*Q$ and $\frac{1}{h}(R_d^{T^*})^{-1}$ with respect to $\tau_{T^*Q}: TT^*Q\rightarrow T^*Q$.

Therefore, we have that 
\begin{align}\label{eq:NumIntegratorL}
R_d^{T^*}\left( hX_H\left({\mathcal F} L \left(\frac{1}{h}R_d^{-1}(q_k, q_{k+1})\right)\right) \right)&=\Phi^{-1}\widehat{R_d}\left( h {\rm d} L\left(\frac{1}{h}( R_d^{-1})(q_k,q_{k+1})\right)\right)\nonumber\\
&=\Phi^{-1} \left( {\rm d}\, L^h_d (q_k, q_{k+1})\right)\, .
\end{align}

Using the previous equation and~\eqref{Eq:HMethod2} we obtain 
\begin{equation}
\label{Eq:HMethod2-3}
\Phi^{-1} \left( {\rm d}\, L^h_d (q_k, q_{k+1})\right)  = (q_k, p_{k}; q_{k+1}, p_{k+1})
\end{equation}
and the discrete variational  equations in~\cite{MW_Acta} are recovered:  
\begin{eqnarray*}
p_k&=&-D_1L^h_d(q_k, q_{k+1})\, ,\\
p_{k+1}&=&D_2L^h_d(q_k, q_{k+1})\, . 
\end{eqnarray*}
These equations lead to the well-known discrete Euler-Lagrange equations:  
\[
D_1L^h_d(q_k,q_{k+1})+D_2L^h_d(q_{k-1},q_k)=0\, .
\]

\begin{example} 
Given a regular Lagrangian $L: TQ\rightarrow {\mathbb R}$, where $Q$ is a vector space, 
consider the discrete Lagrangian $L^h_d(q_0, q_1)=h L\left( \frac{q_0+q_1}{2}, \frac{q_1-q_0}{h}\right)$.
Using  Example~\ref{Ex:LiftBasic}, the left-hand side of Equation~\eqref{eq:NumIntegratorL}  and the right-hand side of~\eqref{Eq:HMethod2-3}, we obtain the following integrator: 
\begin{eqnarray*}
\frac{p_{k+1}-p_k}{h}&=&\frac{\partial L}{\partial q}\left(\frac{q_k+q_{k+1}}{2}, \frac{q_{k+1}-q_k}{h}\right)\, ,\\
\frac{p_k+p_{k+1}}{2}&=&\frac{\partial L}{\partial \dot{q}}\left(\frac{q_k+q_{k+1}}{2}, \frac{q_{k+1}-q_k}{h}\right)\, .  \demo
\end{eqnarray*}

\end{example}

\section{Composition of geometric integrators}\label{Sec:Compose}

The construction of symplectic integrators based on discretization maps is closely related to the notion of Lagrangian submanifolds, as already appears in Section~\ref{Sec:GeomInt}. For instance,  Equation   (\ref{Eq:HMethod2}) defines the following Lagrangian submanifold of the symplectic manifold $(T^*Q\times T^*Q, \Omega_{12})$
\begin{equation}\label{lag-expression}
{\mathcal L}^h=\left\{
 (\alpha_q, \beta_{q'})\in T^*Q\times T^*Q\; \mid\; 
  (\alpha_q, \beta_{q'})=R_d^{T^*}( h\, X_H \left(\gamma_{q''}\right)) \right\}
\end{equation}
where $\gamma_{q''}=(\tau_{T^*Q}\circ \left(R^{T^*}_d\right)^{-1})(\alpha_q, \beta_{q'})\in T^*Q$.

Now, we will use some well-known properties of Lagrangian submanifolds as the composition of Lagrangian submanifolds (see~\cite{90GuiStern} for more details) to describe a particularly elegant method to construct high-order methods from a given low-order integrator (see \cite{hairer}). 
To be more precise, we are going to geometrically describe  the composition   of two (or more)  geometric integrators defined by different discretization maps. As a particular  example, we will recover the well-known St\"ormer-Verlet method, a second-order symplectic  method. 

Let $R_{d,1}$ and $R_{d,2}\colon TQ \rightarrow Q\times Q$ be two discretization maps on $Q$ and $H: T^*Q\rightarrow {\mathbb R}$ be a Hamiltonian function, using Equation (\ref{lag-expression}) we define two Lagrangian submanifolds of $(T^*Q\times T^*Q, \Omega_{12})$ as follows: 
\begin{align*}
{\mathcal L}^{h/2}_1=&\bigg\{
 (q_k, p_k;q_{k+1/}, p_{k+1/2})\in T^*Q\times T^*Q\; \mid\; \exists \; \gamma_{k, k+{1/2}}\in T^*Q \mbox{ s. t. }  \\ & 
  (q_k, p_k ;  q_{k+1/2}, p_{k+1/2})=R_{d,1}^{T^*}\left( \frac{h}{2}\, X_H \left(\gamma_{k, k+{1/2}}\right)\right) \bigg\}\, ,\\
 {\mathcal L}^{h/2}_2= & \bigg\{
 (q_{k+1/2}, p_{k+1/2}; q_{k+1}, p_{k+1})\in T^*Q\times T^*Q\; \mid\;  \exists \; \gamma_{k+{1/2}, k+1}\in T^*Q \mbox{ s. t.}  \\
  & (q_{k+1/2}, p_{k+1/2}; q_{k+1}, p_{k+1})=R_{d,2}^{T^*}\left( \frac{h}{2}\, X_H \left(\gamma_{ k+{1/2}, k+1}\right)\right) \bigg\} \, ,
\end{align*}
where 
\begin{align*}\gamma_{k, k+{1/2}}&=\left(\tau_{T^*Q}\circ \left(R^{T^*}_{d,1}\right)^{-1}\right)(q_k, p_k; q_{k+1/2}, p_{k+1/2})\in T^*Q\, , \\
\gamma_{ k+{1/2}, k+1}&=\left(\tau_{T^*Q}\circ \left(R^{T^*}_{d,2}\right)^{-1}\right)(q_k, p_k; q_{k+1/2}, p_{k+1/2})\in T^*Q\, .
\end{align*}

Under the assumption of clean intersection (see \cite{13GuiStern}), we compose the above two Lagrangian submanifolds as follows
\begin{align*}
{\mathcal L}^{h/2}_2\circ {\mathcal L}^{h/2}_1=& \left\{(\alpha_q,\beta_{q''})\in T^*Q\times T^*Q\; \mid\, \exists\;  \gamma_{q'}\in T^*Q \hbox{ with }
(\alpha_q,\gamma_{q'})\in {\mathcal L}_1^{h/2}, \right. \\ & \left. (\gamma_{q'},\beta_{q''})\in {\mathcal L}_2^{h/2}\right\}\, ,
\end{align*}
obtaining an immersed Lagrangian submanifold. Thus, it generates a new symplectic integrator. Moreover, it is possible to compose more than two Lagrangian submanifolds to generate more involved methods where the intermediate points $\gamma_{q'}$ play the role of micro-nodes (see \cite{MW_Acta,Leok-Shingel, cedric}). 
\begin{example}
Let $Q={\mathbb R}^n$, we consider the two discretization maps $R_{d,1}(q, v)=(q, q+v)$ and $R_{d,2}(q, v)=(q-v, q)$. Then, we compute their corresponding cotangent lifts, $R^{T^*}_{d,1}$ and $R^{T^*}_{d,2}$ as described in Section~\ref{Sec:coTliftRd}, and obtain
\begin{eqnarray*}
{\mathcal L}^{h/2}_1&=&\left\{
 (q_k, p_k; q_{k+1/2}, p_{k+1/2})\;\left\vert \; 
  \begin{array}{c}
 p_{k+1/2}=p_k-\frac{h}{2} \nabla_q H(q_k, p_{k+1/2})\\ q_{k+1/2}=q_k+\frac{h}{2}\nabla_p H(q_k, p_{k+1/2})      
  \end{array} \right.\right\}\\
 {\mathcal L}^{h/2}_2&=&\left\{
 (q_{k+1/2}, p_{k+1/2}; q_{k+1}, p_{k+1})\; \left\vert\; 
 \begin{array}{c} p_{k+1}=p_{k+1/2}-\frac{h}{2} \nabla_q H(q_{k+1}, p_{k+1/2})\\ q_{k+1}=q_{k+1/2}+\frac{h}{2} \nabla_p H(q_{k+1}, p_{k+1/2})
 \end{array} \right.
 \right\}
\end{eqnarray*}
The composition ${\mathcal L}^{h/2}_2\circ {\mathcal L}^{h/2}_1$ gives a new symplectic integrator that corresponds with the St\"ormer-Verlet method \cite{hairer}: 
\begin{eqnarray*}
p_{k+1/2}&=&p_k-\frac{h}{2} \nabla_q H(q_k, p_{k+1/2})\, ,\\
q_{k+1}-\frac{h}{2} \nabla_p H(q_{k+1}, p_{k+1/2})&=&q_k+\frac{h}{2}\nabla_p H(q_k, p_{k+1/2})\, ,\\
p_{k+1}&=&p_{k+1/2}-\frac{h}{2} \nabla_q H(q_{k+1}, p_{k+1/2})\, . \demo
\end{eqnarray*}

\end{example}

When discrete Lagrangian functions are given as in Section~\ref{Sec:discreteVarCalc}, Equations~\eqref{Eq:HMethod2} and~\eqref{Eq:HMethod2-3} can be expressed as Lagrangian submanifolds of $(T^*Q\times T^*Q, \Omega_{12})$ and many of the methods described in \cite{MW_Acta,Leok-Shingel} are recovered.

For instance, for a small positive step size $h$, we consider the following two discretization maps on $Q$, $R_{d,i}\colon TQ \rightarrow Q\times Q$:
\begin{eqnarray*}
R_{d,1}\left(q, \frac{h}{2}v\right)=\left(q, q+\frac{h}{2}v\right), & \mbox{ with inverse } &  R_{d,1}^{-1}(q_0,q_1)=\left(q_0, \dfrac{q_1-q_0}{h/2}\right)\, , \\
R_{d,2}\left(q, \frac{h}{2}v\right)=\left(q-\frac{h}{2}v, q\right), & \mbox{ with inverse } & R_{d,2}^{-1}(q_0,q_1)=\left(q_1, \dfrac{q_1-q_0}{h/2}\right)\, . \end{eqnarray*} We define the discrete Lagrangian functions $L_{d,i}=\left(h\, L\circ \left(R_{d,i}\right)^{-1}\right)\colon Q\times Q \rightarrow \mathbb{R}$ such that the image of $(\Phi^{-1}\circ \, {\rm d} L_{d,i})$ define the Lagrangian submanifolds ${\mathcal L}_i$ of the symplectic manifold $(T^*Q \times T^*Q, \Omega_{12})$. The submanifolds ${\mathcal L}_i$ define a discrete dynamical system whose equations are locally described by
$${\mathcal L}_i=\{(q_0,p_0;q_1,p_1) \in T^*Q \times T^*Q \; \mid \; p_0=-{\rm D}_1 {\rm L}^i_d(q_0,q_1), \quad   p_1={\rm D}_2 {\rm L}^i_d(q_0,q_1)\}\, .$$
The  composition 
\begin{equation*}
{\mathcal L}_{2}\circ {\mathcal L}_1=\{(\alpha_1,\alpha_2)\; \mid \; \exists \; \alpha_{1/2}\in T^*Q \mbox{ s. t. } (\alpha_1,\alpha_{1/2})\in {\mathcal L}_1 \, , \,  (\alpha_{1/2},\alpha_2)\in {\mathcal L}_2\}\, .
\end{equation*}
has associated the dynamics given by the discrete Lagrangian ${\rm L}^3_d(q_0,q_2)={\rm L}^1_d(q_0,q_1)+{\rm L}^2_d(q_1,q_2)$, that plays the role of generating function (see also \cite{Fernando}). The discrete equations are
\begin{eqnarray*}
	p_0&=&-{\rm D}_1 {\rm L}_d^1(q_0,q_1)\, ,\\
	0&=&{\rm D}_2 {\rm L}^1_d(q_0,q_1)+{\rm D}_1 {\rm L}_d^2(q_1,q_2)\, ,\\
	p_2&=&{\rm D}_2 {\rm L}^2_d(q_1,q_2)\, . 
\end{eqnarray*}

\subsection{Symplectic symmetric methods}\label{section-ssm}

If we have a Lagrangian submanifold ${\mathcal L}$ of $(T^*Q\times T^*Q, \Omega_{12})$, then the transpose ${\mathcal L}^{\dagger}$ defined by
\[
{\mathcal L}^{\dagger}=\{(\alpha_q, \beta_{q'})\in T^*Q\times T^*Q\; \mid\; (\beta_{q'}, \alpha_q)\in {\mathcal L}\}
\]
is also a Lagrangian submanifold of $(T^*Q\times T^*Q, \Omega_{12})$. 

For a Hamiltonian function and a discretization map on $T^*Q$, we consider the following Lagrangian submanifold  used in the previous section:
\[
{\mathcal L}^h=\left\{
 (\alpha_q, \beta_{q'})\in T^*Q\times T^*Q\; \mid\; \exists \; \gamma_{q''}\in T^*Q \mbox{ s. t. }
  (\alpha_q; \beta_{q'})=R_d^{T^*}( h\, X_H \left(\gamma_{q''}\right)) \right\}\, .
\]
As described in~\cite{hairer, MW_Acta}, the composition of symplectic methods (seen here as  Lagrangian submanifolds) gives rise to new symplectic methods. For instance, the Lagrangian submanifold \[
\left({\mathcal L}^{h/2}\circ {\mathcal L}^{h/2}\right)^{\dagger}\, 
\]
is another way to interpret the St\"ormer-Verlet method considered in the previous section.

\begin{definition}
A symplectic method  defined by ${\mathcal L}^h$ is {\bf symmetric}  if \[
\left({\mathcal L}^h\right)^{\dagger}={\mathcal L}^{-h}\, .
\]
\end{definition}

\begin{proposition} Let $\iota: Q\times Q\rightarrow Q\times Q$ be the inversion map defined by $\iota (q, q')=(q', q)$.  
If $R_d(v_q)=\iota(R_d(-v_q))$ for all $v_q\in T_qQ$, then ${\mathcal L}^h$ is symmetric.
\end{proposition}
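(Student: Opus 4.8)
The plan is to recognize that the hypothesis is nothing but the statement that $R_d$ is symmetric, and then to push this symmetry through the cotangent lift and the definition of ${\mathcal L}^h$. First I would note that $\iota$ coincides with the inversion map $I_M$ of Proposition~\ref{Prop:inversion} for $M=Q$. Writing $v_q=(q,v)$, the hypothesis $R_d(v_q)=\iota(R_d(-v_q))$ becomes $R_d(q,v)=I_M(R_d(q,-v))=R^*_d(q,v)$, so that $R^*_d=R_d$; that is, $R_d$ is symmetric in the sense of Definition~\ref{symmetric-discretization}. Proposition~\ref{prop_cotangent_sym} then transfers this symmetry to the cotangent lift: since $R_d^*=R_d$ we get $(R_d^{T^*})^*=(R_d^*)^{T^*}=R_d^{T^*}$, i.e. the discretization map $R_d^{T^*}$ on $T^*Q$ is itself symmetric.

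Next I would unwind the two definitions involved. By definition of the transpose, $(\alpha_q,\beta_{q'})\in({\mathcal L}^h)^\dagger$ if and only if $(\beta_{q'},\alpha_q)\in{\mathcal L}^h$, i.e. there exists $\gamma\in T^*Q$ with $(\beta_{q'},\alpha_q)=R_d^{T^*}(h\,X_H(\gamma))$ and $\gamma=(\tau_{T^*Q}\circ(R_d^{T^*})^{-1})(\beta_{q'},\alpha_q)$. Applying the inversion $I_{T^*Q}$ to the first equality turns it into $(\alpha_q,\beta_{q'})=I_{T^*Q}\bigl(R_d^{T^*}(\gamma,h\,X_H(\gamma))\bigr)$. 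Here I would invoke the defining relation of the adjoint for the discretization map $R_d^{T^*}$ on $T^*Q$, namely $I_{T^*Q}\bigl(R_d^{T^*}(\gamma,w)\bigr)=(R_d^{T^*})^*(\gamma,-w)$ for $w\in T_\gamma T^*Q$, which is Proposition~\ref{Prop:inversion} read for $M=T^*Q$. Taking $w=h\,X_H(\gamma)$ and using the symmetry $(R_d^{T^*})^*=R_d^{T^*}$ just established, this gives $(\alpha_q,\beta_{q'})=R_d^{T^*}(\gamma,-h\,X_H(\gamma))=R_d^{T^*}\bigl((-h)\,X_H(\gamma)\bigr)$.

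Finally I would check that this last equality is exactly the membership condition for ${\mathcal L}^{-h}$. The vector $-h\,X_H(\gamma)$ lies in the same fiber $T_\gamma T^*Q$, so its base point is still $\gamma$; consequently $(\tau_{T^*Q}\circ(R_d^{T^*})^{-1})(\alpha_q,\beta_{q'})=\gamma$, and $(\alpha_q,\beta_{q'})=R_d^{T^*}((-h)\,X_H(\gamma))$ is precisely the defining relation of ${\mathcal L}^{-h}$ with micro-node $\gamma$. Since every step above is a logical equivalence and the existential quantifier on $\gamma$ is preserved throughout, reading the chain backwards yields the reverse inclusion, so $({\mathcal L}^h)^\dagger={\mathcal L}^{-h}$ and ${\mathcal L}^h$ is symmetric.

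The step I expect to be the most delicate is the bookkeeping in this last paragraph: one must make sure that negating the fiber vector $h\,X_H(\gamma)$ neither moves its base point $\gamma$ nor alters the intermediate point encoded by $(R_d^{T^*})^{-1}$, so that the resulting condition is genuinely that of ${\mathcal L}^{-h}$ (with the same $\gamma$) rather than some other Lagrangian submanifold, and that the adjoint relation is applied in the correct fiber. As usual these submanifolds are immersed, so the identity is to be understood locally, on the domain where $R_d^{T^*}$ is a diffeomorphism.
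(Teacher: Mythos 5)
Your proof is correct and follows essentially the same route as the paper: identify the hypothesis as the symmetry $R_d^*=R_d$, transfer it to $R_d^{T^*}$ via Proposition~\ref{prop_cotangent_sym}, and apply the inversion $\iota_{T^*Q}$ to the defining equation of $({\mathcal L}^h)^{\dagger}$ to obtain ${\mathcal L}^{-h}$. Your version merely spells out more carefully the adjoint relation $I_{T^*Q}\bigl(R_d^{T^*}(\gamma,w)\bigr)=(R_d^{T^*})^*(\gamma,-w)$ and the invariance of the micro-node $\gamma$, details the paper leaves implicit.
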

\begin{proof}
Observe that 
\[
({\mathcal L}^h)^{\dagger}=\left\{
  (\alpha_q,\beta_{q'})\in T^*Q\times T^*Q\; \mid\; \exists \; \gamma_{q''}\in T^*Q \mbox{ s. t. }
  (\beta_{q'},\alpha_q)=R_d^{T^*}( h\, X_H \left(\gamma_{q''}\right))\right\}\, .
\]
By Proposition~\ref{prop_cotangent_sym}, $R_d^{T^*}$ is symmetric and if we apply the inversion ${\iota}_{T^*Q}: T^*Q\times T^*Q\rightarrow T^*Q\times T^*Q$ on $T^*Q$ on both sides of the equation defining $({\mathcal L}^h)^{\dagger}$ we get:
$$\iota_{T*Q}(\beta_{q'},\alpha_q)=\iota_{T^*Q}\left(R_d^{T^*}( h\, X_H \left(\gamma_{q''}\right))\right)=R_d^{T^*}( -h\, X_H \left(\gamma_{q''}\right)).$$
Thus, we immediately deduce that $({\mathcal L}^h)^{\dagger}={\mathcal L}^{-h}$. 
\end{proof}

It is well-known that the order of a symmetric method is necessarily even, then using symmetric discretization maps as in Definition \ref{symmetric-discretization} we always obtain a second-order method.

\subsection{Construction of higher-order symplectic methods} \label{Sec_higherorder}

In the previous sections we have introduced  first and second-order symplectic methods starting with different discretization maps.  Now, we will show that the composition of Lagrangian submanifolds is a geometric tool to produce higher-order symplectic methods  equivalent to the composition of numerical methods (see \cite{hairer,LeRe,Yoshida,blanes}) 

From an initial  discretization map $R_d: TQ\rightarrow Q\times Q$ and a Hamiltonian system $X_H$ we construct the Lagrangian submanifold ${\mathcal L}^h$ as in Equation~\eqref{lag-expression}. For real numbers $\gamma_1, \ldots, \gamma_s$, we define the Lagrangian submanifold
\begin{equation}\label{comp}
{\mathcal L}^{\gamma_sh}\circ \ldots \circ {\mathcal L}^{\gamma_1h}
\end{equation}
that generates a symplectic composition method. 

If ${\mathcal L}^h$ generates a method of order two  and the coefficients $\gamma_1, \ldots, \gamma_s$ verify 
\begin{align*}
\gamma_1+ \ldots + \gamma_s = 1\, ,\\
\gamma^3_1+ \ldots + \gamma^3_s = 0,
\end{align*}
then the  symplectic composition method~\eqref{comp}  is at least of order 3.

As described in~\cite{hairer} for $s=3$, if we start with a a method of order two
\[
\gamma_1 =\gamma_3= \frac{1}{2-2^{1/3}}, \quad  
\gamma_2= -\frac{2^{1/3}}{2-2^{1/3}}\,  ,
\]
we obtain a method of order 4 due to the symmetry of the coefficients. By repeating this procedure we obtain methods of order 6, 8, etc. Of course, other choices of the parameters produce different higher-order numerical methods (see for instance \cite{Suzuki,Mclachan}). 
Additionally, other techniques like splitting methods perfectly fit in our framework as composition of Lagrangian submanifolds.

Another interesting family of methods are the symplectic Runge-Kutta methods. For instance, the diagonally implicit Runge-Kutta methods (that is, the coefficients verify $a_{ij}=0$ if $i<j$) and  $b_i\not=0$ are derived from the symmetric discretization map
$R_d(q, v)=(q-v/2, q+v/2)$.
For a Hamiltonian function $H: T^*Q \rightarrow {\mathbb R}$ the map $R_d$ produces the Lagrangian submanifold of $T^*Q\times T^*Q$:
\[
{\mathcal L}^h=\left\{
 (q_k, p_k, q_{k+1}, p_{k+1})\in T^*Q \times T^*Q \; \left\vert\; \
\begin{array}{r}
\frac{q_1-q_0}{h}=\frac{\partial H}{\partial p}\left(\frac{q_0+q_1}{2}, \frac{p_0+p_1}{2}\right)\, \\ \\
\frac{p_1-p_0}{h}=
-\frac{\partial H}{\partial q}\left(\frac{q_0+q_1}{2}, \frac{p_0+p_1}{2}\right)\, 
\end{array}\right.
 \right\}\, .
\]
This Lagrangian submanifold corresponds with the implicit midpoint rule. It is well-known that the symplectic diagonally implicit Runge Kutta methods
are equivalent to the following composition  \cite{hairer}
\[
{\mathcal L}^{b_1h}\circ {\mathcal L}^{b_2h}\circ \ldots {\mathcal L}^{b_sh}\, .
\]
Similar argument also holds for partitioned  Runge–Kutta method based on two diagonally implicit methods

To sum up, our constructions allow to reinterpret other well-known techniques for designing higher-order methods. Specifically,  from second-order methods obtained by discretization maps in Section~\ref{section-ssm} we can derive higher-order methods using composition of Lagrangian submanifolds. These techniques are not limited to standard Hamiltonian systems but can also be used for Hamiltonian systems defined on cotangent bundles of manifolds (as Lie groups \cite{BogMa} for instance) or more general situations as we will describe in Section~\ref{Sec:future}.

\section{Conclusions and future work}\label{Sec:future}
In this paper we have introduced the lift of a discretization map to tangent and cotangent bundles which are the  phase spaces of mechanical systems. 
These lifts allow us to derive geometric integrators for systems defined by a Lagrangian or Hamiltonian function. Standard constructions in symplectic geometry, as well as properties of Lagrangian submanifolds, create a geometric framework to obtain several well-known symplectic integrators. Our geometric point of view opens the door for new types  of applications of discretization maps, as well as, for the construction of geometric (symplectic) integrators following simple rules  (lifting of retractions, composition and generating functions for Lagrangian submanifolds, etc). Now, we will mention some promising future research lines.    

\subsection{Reduced systems and systems with holonomic constraints }
The notion of  lift of  discretization maps can be easily extended to the Lie algebroid setting using the Lie algebroid prolongation \cite{LeMaMa}. This theory covers all the examples of reduced systems  by symmetry groups. Therefore, combining both constructions we can  directly apply our results to the construction of geometric integrators  for Lagrangian or Hamiltonian functions invariant under the action of a symmetry Lie group \cite{MR1365779,MR2230001}. It is also well-known how to produce geometric integrators for systems subject to holonomic constraints. Thus, it will be interesting to produce  constrained geometric integrators using our approach and compare them with \cite{LeRe,McMoVe}.

\subsection{Discrete gradient methods}
In general for ordinary differential equations in ${\mathbb R}^n$ in skew-gradient form, i.e. $\dot{x}=\Pi(x)\nabla H(x)$ where $x\in \mathbb{R}^n$ and $\Pi(x)$ is a skew-symmetric matrix, it is clear that  $H$ is a first integral.  Using discretizations of the gradient $\nabla H(x)$ it is possible to define a class of integrators that preserve exactly the first integral $H$ (see \cite{Gonzalez,McLachlan}).
They are defined as follows:  
let $H:\mathbb{R}^n\rightarrow \mathbb{R}$ be a differentiable function, then $	\overline{\nabla}H:\mathbb{R}^{2N}\longrightarrow \mathbb{R}^N$ is a discrete gradient of $H$ if it is continuous and satisfies
		\begin{align}	
			\overline{\nabla}H(x,x')^T(x'-x)&=H(x')-H(x)\, , \quad \, \mbox{ for all } x,x' \in\mathbb{R}^n  \, , \nonumber \\
				\overline{\nabla}H(x,x)&=\nabla H(x)\, , \quad \quad \quad \quad \mbox{ for all } x \in\mathbb{R}^n  \, . \label{discGrad}
		\end{align}
For a Hamiltonian system $H: T^*Q\rightarrow {\mathbb R}$, we can generalize the previous construction by using a discretization map $R_d^{TT^*Q}: TT^*Q \rightarrow T^*Q\times T^*Q$ on a  general differentiable manifold $T^*Q$. We define a discrete gradient as a map
$\overline{{d}{H}}: T^*Q\times T^*Q \longrightarrow T^{*}T^*Q$ that makes the following diagram commutative
$$
\xymatrix{
T^*Q\times T^*Q \ar[rr]^{\overline{{d}{H}}} \ar[d]_{\left(R_d^{TT^*Q}\right)^{-1}} && T^{*}T^*Q \ar[d]^{\pi_{T^*Q}}\\
	TT^*Q \ar[rr]^{\tau_{T^*Q}} && T^*Q
}
$$

Similar to~\eqref{discGrad},  $\overline{d}{H}$ must verify the following properties: 
		\begin{align*}
			\langle \overline{{d}{H}}(x,x'), \left(R_d^{TT^*Q}\right)^{-1}(x, x')\rangle &=H(x')-H(x)\, , \quad \, \mbox{ for all } x,x' \in T^*Q  \, , \\
			\overline{{d}}H(x,x)&=d H(x)\, , \quad \quad \quad \quad \mbox{ for all } x \in T^*Q  \, . 
		\end{align*}
In this case, an energy preserving integrator would be 
\[
\left(R_d^{TT^*Q}\right)^{-1}(x, x')=\omega^{\sharp}(x'')(\overline{{d}{H}}(x, x'))
\]
where $x''=\tau_{T^*Q}^*\left(\left(R_d^{TT^*Q}\right)^{-1}(x, x')\right)$. We will explore this possibility in a future paper (see also \cite{CMO,Celledoni},  for the case of extension to manifolds, in special, Riemannian manifolds).

\subsection{Higher-order lagrangian systems} \label{Future_higherorder}

Another topic of interest is related with the discretization of higher-order Lagrangian systems $L: T^{(k)}Q\rightarrow {\mathbb R}$ using appropriate higher-order lifts of discretization maps. These constructions will be useful for interpolation problems on manifolds and for optimal control problems (see \cite{Crouch,Invariant1,CFM}). For instance some interesting optimal control problems for mechanical systems are describe using a   second-order Lagrangian system $L: T^{(2)}Q\rightarrow {\mathbb R}$ (position, velocities and accelerations).  The corresponding hamiltonian formulation is given in the  cotangent bundle $T^*T Q$. To obtain a symplectic integrator we could use first the  tangent lift of a discretization map $R_d: TQ\rightarrow Q\times Q$ and finally its cotangent lift.

\subsection{Higher-order geometric methods}

Section~\ref{Sec_higherorder} is the starting point of a research line that seeks to obtain higher-order geometric methods by using discretization maps. 
In this paper, we have only briefly discussed composition and splitting methods, but in the future, it would be interesting to explore other possibilities, as for instance, the well-known family of implicit symplectic Runge-Kutta methods and symplectic partitioned Runge–Kutta method. Moreover, the use of discretization maps on the cotangent bundle that are not lifted from the base manifold may be useful for obtaining higher-order methods.

\subsection{Geometric integration of Dirac systems}
Dirac structures  were introduced in~\cite{CouWei,Courant} as a way to unify presymplectic and Poisson geometries giving a way to collect in the same geometric framework many situations of interest in mechanics and mathematical physics. As an example we can think of the Dirac structure $D\subset TT^*Q\oplus T^*T^*Q$ induced by the canonical symplectic structure $\omega_Q$,
that is 
\[
D=\{(X_{\alpha_q}, \lambda_{\alpha_q})\in TT^*Q\oplus T^*T^*Q\; \mid\; i_{X_{\alpha_q}}\omega_Q=\lambda_{\alpha_q}\}\, .
\]
For a Hamiltonian system  $H\colon T^*Q\rightarrow {\mathbb R}$, we can write Hamilton's equations as
\begin{equation}\label{eq:DiracHamEq}
  X_{\alpha_q}\oplus {\rm d} H(\alpha_q)\in D_{\alpha_q}\, ,
\end{equation}
with $\alpha_q\in T_q^*Q$.
As studied in the literature, Dirac structures are more general than the above example. They can be given by a Poisson tensor, for instance, or could not satisfy the integrability condition admitting new generalizations as in the case of nonholonomic constraints  (see \cite{barbero}). Moreover, it is also interesting to study the case of Dirac systems where the dynamics is not induced by a function on the cotangent bundle (as in the case of standard Hamiltonian dynamics), but for a general Lagrangian submanifold ${\mathcal S}$ of $(T^*T^*Q, \omega_{T^*Q})$ (as in the case of singular Lagrangians, optimal control theory, etc). Now, Equation~\eqref{eq:DiracHamEq} must be replaced by 
\begin{equation*}
  X_{\alpha_q}\oplus S_{\alpha_q}\in D_{\alpha_q}\, .
\end{equation*}
discretization maps could also be used here to deduce geometric integrators. Let $R_d: TQ\rightarrow Q\times Q$ be a discretization map on $Q$, the cotangent lift of $R_d$ defines the following geometric integrator 
\[
 \left(R_d^{T^*}\right)^{-1}(q_k, p_k; q_{k+1}, p_{k+1})  \oplus S_{\gamma_{k, k+1}}\in D_{\gamma_{k, k+1}}
\]
where $\tau_{T^*Q}\left( (R_d^{T^*})^{-1}(q_k, p_k; q_{k+1}, p_{k+1})\right)=\gamma_{k, k+1}$. We will study in a forthcoming paper the design of Dirac integrators using the lift of the discretization map, their geometrical properties (preservation of the associated presymplectic foliation, etc.) and compare them with other approaches on this topic (\cite{LeOh, Leok-Shingel}).

\subsection{Morse families for Lagrangian submanifolds and symplectic integration}
In this paper it is clear the close relationship between the design of different symplectic methods and the  construction of Lagrangian submanifolds. 
The notion of a Morse family or
phase function was introduced in~\cite{Hormander} (see also \cite{Weinstein-Alan}) and it is possible to prove that locally any Lagrangian submanifold is the image of a Lagrangian immersion generated by a Morse family. In a recent paper \cite{barbero}, we have combined Dirac structures and Morse families to obtain a geometric formalism that unifies most of the scenarios in mechanics (constrained calculus, nonholonomic systems, optimal control theory, higher-order mechanics, etc.), as the examples in the paper show. Employing the techniques introduced here we aim to study the construction of geometric integrators for all the above-mentioned cases, as well as, the notion of Morse family to construct new geometric integrators.

\section*{Acknowledgments}
The authors acknowledge financial support from the Spanish Ministry of Science and Innovation, under grants PID2019-106715GB-C21,   the Spanish National Research Council, through the ``Ayuda extraordinaria a Centros de Excelencia Severo Ochoa'' R\&D  (CEX2019-000904-S) and I-Link Project (Ref: linkA20079) from CSIC (CEX2019-000904-S). MBL has been financially supported by ``Programa propio de I+D+I de la Universidad Polit\'ecnica de Madrid: Ayudas dirigidas a j\'ovenes investigadores doctores para fortalecer sus planes de investigaci\'on".

\bibliography{References}

\end{document}